\definecolor{caribbeangreen}{rgb}{0.0, 0.8, 0.6}
\definecolor{darkpastelgreen}{rgb}{0.01, 0.75, 0.24}
\definecolor{green(pigment)}{rgb}{0.0, 0.65, 0.31}
\pgfplotsset{width=7cm,compat=newest} 
\newtheoremstyle{note}
{3pt}
{3pt}
{}
{}
{\slshape}
{:}
{.5em}
{}
\newtheorem{theorem}{Theorem}[section]
\newtheorem{theo}{Theorem}
\newtheorem{proposition}[theorem]{Proposition}
\newtheorem{corollary}[theorem]{Corollary}
\newtheorem{definition}[theorem]{Definition}
\newtheorem{remark}[theorem]{Remark}
\renewcommand{\theequation}{\thesection.\arabic{equation}}
\long\def\salta#1{\relax}
\def\dive {\text{div }}
\def\N{\nabla}
\def\vp{\varphi}
\newcommand{\integrale}{\int_\Omega}
\def\al{\alpha}
\def\vare{\varepsilon}
\def\ro{\rho}
\newcommand{\meas}{\text{meas}}
\newcommand{\lm}{\lambda}
\newcommand{\ga}{\gamma}
\newcommand{\si}{\sigma}
\newcommand{\ds}{\displaystyle}
\newcommand{\D}{\Delta}
\newcommand{\om}{\omega}
\definecolor{seagreen}{rgb}{0.18, 0.55, 0.34}
\def\og{\leavevmode\raise.3ex\hbox{$\scriptscriptstyle\langle\!\langle$~}}
\def\fg{\leavevmode\raise.3ex\hbox{~$\!\scriptscriptstyle\,\rangle\!\rangle$}}
\newtheorem{theoa}{Theorem}
\newtheorem{propa}[theoa]{Proposition}
\newtheorem{lemb}{Lemma}
\newtheorem{rmkappb}[lemb]{Remark}
\newtheorem{theoc}{Theorem}
\crefname{section}{Section}{Sections}
\crefname{subsection}{Subsection}{Subsections}
\crefname{appsec}{Appendix}{Appendices}
\crefname{appendix}{Appendix}{}
\crefname{equation}{}{}
\crefname{figure}{Figure}{Figures}
\crefname{theo}{Theorem}{Theorems}
\crefname{definition}{Definition}{Definitions}
\crefname{theorem}{Theorem}{Theorems}
\crefname{proposition}{Proposition}{Propositions}
\crefname{corollary}{Corollary}{Corollaries}
\crefname{remark}{Remark}{Remarks}
\crefname{lemma}{Lemma}{Lemmas}
\crefname{theoa}{Theorem}{Theorems}
\crefname{propa}{Proposition}{Propositions}
\crefname{rmkappa}{Remark}{Remarks}
\crefname{lema}{Lemma}{Lemmas}
\author[M. Magliocca]{Martina Magliocca}
\address[M. Magliocca]{Dipartimento di Matematica, Universit\`a degli Studi Tor Vergata, Via della Ricerca Scientifica 1, 00133 Rome, Italy. 
	\\ \url{magliocc@mat.uniroma2.it}}
\keywords{Nonlinear parabolic equations, Unbounded data, Repulsive Gradient} \subjclass[2000]{35K55,35K61,35R05}
\begin{document}

\title[Existence results for parabolic problem with a repulsive gradient term]{Existence results for a Cauchy-Dirichlet parabolic problem\\ with a repulsive gradient term}

\begin{abstract}
We study the existence of solutions of a nonlinear parabolic problem of Cauchy-Dirichlet type having a lower order term which depends on the gradient. The model we have in mind is the following:
\begin{equation*}
\begin{cases}
\begin{array}{ll}
 u_t-\dive (A(t,x)\N u|\N u|^{p-2})=\gamma |\N u|^q+f(t,x) &\text{in } Q_T,\\
 u=0  &\text{on }(0,T)\times \partial \Omega,\\
 u(0,x)=u_0(x) &\text{in } \Omega,
\end{array}
\end{cases}
\end{equation*}
where $Q_T=(0,T)\times \Omega$, $\Omega$ is a bounded domain of $\mathrm{R}^N$, $N\ge 2$, $1<p<N$, the matrix $A(t,x)$ is coercive and with {measurable bounded coefficients}, the r.h.s. growth rate satisfies the{ superlinearity condition}
\[
\max\left\{\frac{p}{2},\frac{p(N+1)-N}{N+2}\right\}<q<p
\]
and the initial datum $u_0$ is an unbounded function belonging to a suitable {Lebesgue space} $L^\si(\Omega)$. We point out that, once we have fixed $q$, there exists a link between this growth rate and exponent $\si=\si(q,N,p)$ which allows one to have (or not) an existence result.  Moreover, the value of $q$ deeply influences the notion of solution we can ask for.

The {sublinear growth} case with
\[
0<q\le\frac{p}{2}
\]
is dealt at the end of the paper for what concerns small value of $p$, namely $1<p<2$.
\end{abstract}

\maketitle
\tableofcontents

\section{Introduction}

\setcounter{equation}{0}

Let $Q_T=(0,T)\times \Omega$, where $\Omega$ is a bounded domain of $\mathrm{R}^N$, $N\ge 2$.\\ 
We are interested in the study of existence results concerning the following nonlinear parabolic problem of Cauchy-Dirichlet type:
\begin{equation}\label{pb}
\begin{cases}
\displaystyle
\begin{array}{ll}
u_t-\dive a(t,x,u,\nabla u)=H(t,x,\nabla u)&\text{in}\,\, Q_T,\\
u=0  &\text{on}\,\,(0,T)\times \partial \Omega,\\
u(0,x)=u_0(x) &\text{in}\,\, \Omega,
\end{array}
\end{cases}
\end{equation}
where the initial datum $u_0=u_0(x)$ is a possibly \emph{unbounded} function belonging to a suitable {Lebesgue space} $L^\sigma(\Omega)$, the operator $-\dive a(t,x,u,\nabla u)$ satisfies conditions of Leray-Lions type in the space $L^p(0,T;W^{1,p}_0(\Omega))$ with $1< p<N$, the r.h.s. $H(t,x,\nabla u)$ is supposed to grow at most as \emph{a power of the gradient} plus a {forcing term}, namely $\ds|H(t,x,\nabla u)|\le \gamma |\N u|^q+f$, $\gamma>0$, provided that $f=f(t,x)$ belongs to a suitable space $L^r(0,T;L^m(\Omega))$ and the gradient growth rate is such that $\ds q<p$.

The model equation one has to keep in mind is the following:
\begin{equation}\label{Dp}
u_t-\D_pu=\gamma|\N u|^q+f\quad\text{in}\,\, Q_T 
\end{equation}
where $\D_p v$ is the $p$-Laplace operator, namely $\D_pv=\dive (|\N v|^{p-2}\N v)$.\\

We give a very brief recall aimed at motivating both the {mathematical} and {physical} interest in the study of problem \eqref{Dp}. Consider, for the sake of simplicity, the linear case $p=2$ and thus the equation we take into account is
\begin{equation}\label{es1} 
\displaystyle{ u_t-\Delta u=|\N u|^q+f(t,x)\quad\text{in}\,\, Q_T.}
\end{equation}
The equation \eqref{es1} can be seen, up to scaling, as the approximation in the viscous sense ($\vare\to 0^+$) of Hamilton-Jacobi equations. We refer to \cite{L} for a deeper analysis in this sense. 
Moreover, \eqref{es1} is studied in the physical theory of growth and roughening of surfaces as well and it is known under the name of Kardar-Parisi-Zhang equation (see, for instance, \cite{KPZ,KS}). We refer to \cite{SZ} for a detailed overview on the several applications of \eqref{es1}. Finally, equation \eqref{Dp} is the simplest model for a quasilinear second order parabolic problem with nonlinear reaction terms of first order.\\

Here we list some previous papers and results to explain what is known in the literature.

The case $p=2$ with \emph{Laplace operator}, $f=0$ and unbounded initial data belonging to Lebesgue spaces has been extensively studied in \cite{BASW}. The authors provide a detailed investigation of the Cauchy problem 
\begin{equation}\label{hj}
\begin{cases}
\begin{array}{ll}
u_t-\Delta u=\gamma |\N u|^q & \text{in}\,\,(0,T)\times \mathbb{R}^N,\\
u(0,x)=u_0(x) & \text{in}\,\,\mathbb{R}^N
\end{array}
\end{cases}
\end{equation}
assuming that $q>1$ and $\gamma\in \mathbb{R}$, $\gamma\ne 0$. Their approach to the study of \eqref{hj} goes through {semigroup theory} and {heat kernel estimates} and points out that one is allowed to have (or not) existence of a solution $u$ only if the gradient growth $q$ and the integrability class of $u_0$ satisfy a {precise relation}.
To be clear, they show that, for fixed value of $2-\frac{N}{N+1}<q<2$, $u_0$ has to be taken in the Lebesgue space $L^\sigma(\Omega)$ for $\ds\sigma=\frac{N(q-1)}{2-q}$ while, if $q<2-\frac{N}{N+1}$, data measures are allowed. 
Nonexistence and nonuniqueness results are also proved for positive data $u_0\ge 0$ whereas $\gamma>0$, $q<2$ and  $u_0\in L^{\si}(\Omega)$ for $\sigma<\frac{N(q-1)}{2-q}$. 
In addition, the authors take into account initial data in Sobolev's spaces, as well as the cases of attractive gradient ($\gamma<0$) with positive initial data and of supernatural growth $q\ge 2$ with $\sigma\ge 1$ (in which existence fails).\\
Even if this reference is concerned with the Cauchy problem, several arguments are actually local in space.

In a similar spirit, we refer to \cite{BD} for the study of the Cauchy-Dirichlet problem in the case of \emph{Laplace operator}, $f=0$ and $q>0$, namely
\begin{equation*}
\begin{cases}
\begin{array}{ll}
u_t-\Delta u=\gamma |\N u|^q & \text{in}\,\,(0,T)\times \Omega,\\
u=0 & \text{on}\,\, (0,T)\times\partial \Omega,\\
u(0,x)=u_0(x) & \text{in}\,\,\Omega.
\end{array}
\end{cases}
\end{equation*}
The authors provide existence, uniqueness and regularity results when the nature of the gradient is both repulsive and attractive and the initial datum is a function belonging to a Lebesgue space or a bounded Radon measure as well. The same problem is dealt with in \cite{BDL} for regular initial data $u_0\in C_0(\overline{\Omega})$ for what concerns the long time behaviour of the solution. 

We underline that, because of the semigroup theory approach and the heat kernel regularity, the results proved in the works just mentioned cannot be extended to problems  with more general operators like those considered in this paper, which include nonlinear operators with \emph{measurable coefficients}.

As for the case of $p$-Laplace operator with $p>2$, problem \eqref{Dp} was treated in \cite{At} for a gradient growth rate satisfying $q>p-1$, $f=0$ and $u_0\in W^{1,\infty}(\Omega)$.

As far as general operators in divergence form are concerned, previous works have considered either the case that $q=p$ or the case that $q$ is sufficiently small.

We refer to \cite{BMP,OP,DGP,DGL} for what concerns the case in which the r.h.s. has {natural} growth (i.e. $q=p$). In particular, the problem \eqref{pb} with $q=p$ is studied in \cite{DGP} while \cite{DGL} generalizes the r.h.s. to $\beta(u)|\N u|^p+f$, $\beta(\cdot)$ bounded, polynomial or exponential as well.\\
Furthermore, in \cite{Po,DNFG} the authors take into account the case where the growth of the gradient is determined by the value
\begin{equation}\label{g}
q=\frac{p(N+1)-N}{N+2}
\end{equation}
which corresponds to a ''linear'' growth, in a sense specified later.
We point out that \cite{Po} analyses the existence of weak solutions belonging at least to $L^1(0,T;W^{1,1}_0(\Omega))$ and thus a lower bound for $p$ is required.\\
However, the critical growth in \eqref{g} seems not to be always sharp for the problem to exhibit a ''linear'' behaviour. We will show later some arguments aimed to justify our claim.\\

Our purpose is filling the gap between the cases with ''sublinear'' and natural growth for what concerns general operators in divergence form. This means that we deal with problems which have ''superlinear'' growth in the gradient term (we will explain and comment later what we mean for ''superlinearity''). To some extent, this work is the extension to parabolic equations of similar results obtained in \cite{GMP} for stationary problems.

\subsection{Comments on the $q$ growth and comparison with the stationary problem}

We here present the stationary case of the problem \eqref{pb} which is deeply analysed in \cite{GMP,FM} (see also \cite{AFM}). Such a problem reads
\begin{equation*}
\begin{cases}
\begin{array}{ll}
\alpha_0 u-\dive a(x,u,\nabla u)=H(x,\nabla u) &\text{in}\,\, \Omega,\\ u=0  &\text{on}\,\, \partial \Omega
\end{array}
\end{cases}
\end{equation*}
and the hypotheses assumed are the following: $\alpha_0\ge0$, the operator $-\dive (a(x,u,\N u))$ satisfies conditions of Leray-Lions type, the r.h.s. growth is determined by $|H(x,\xi)|\le \gamma|\xi|^q+f(x)$ for $\ds p-1<q<p$ and 
\[
f\in L^m(\Omega)\,\,\text{where}\,\,m=\frac{N(q-(p-1))}{q}
\]
provided that $m\ge 1$. We point out that the particular value of $m$ above is optimal in the sense that it represents the minimal regularity one has to require on the source term $f\in L^m(\Omega)$ in order to have an existence result. We also refer to \cite{HMV} for further comments on this sense. \\
The a priori estimates proved in \cite{GMP} state that
\begin{equation}\label{sapell}
\|\N[(1+|u|)^\rho]\|_{L^p(\Omega)}\le M
\end{equation}
where $\rho=\frac{(N-p)(q-(p-1))}{p(p-q)}$ and with the constant $M$ depends on the parameters of the problem and, above all, on the forcing term. The dependence on $f$ varies if $\alpha_0=0$ or $\alpha_0>0$. More precisely, if $\alpha_0=0$ a size condition on the data is required. On the contrary, the case $\alpha_0>0$ (which is the closest to the parabolic problem) does not need such a condition. In particular, in this last case, $M$ remains bounded when $f$ \emph{varies in sets which are bounded and equi-integrable in} $L^m(\Omega)$.
We underline that such a kind of dependence on the datum is due to the fact that we are in the \emph{superlinear growth setting}.\\
Roughly speaking, the l.h.s. grows like a $(p-1)$-power of $|\N u|$ and thus we are saying that the r.h.s. grows faster (indeed $q>p-1$).

We conclude by pointing out that, on account of \eqref{sapell}, depending on $\ro\ge 1$ or $\ro<1$ we have different ranges of $q$ which lead to either solutions of finite energy or solutions with infinite energy.\\

We will prove later that the parabolic problem \eqref{pb} verifies an estimate which is similar to the one in \eqref{sapell}. To be more precise, such an estimate has the form
\begin{equation}
\|u\|_{L^\infty(0,T;L^\si(\Omega))}+\|\N[(1+|u|)^\beta]\|_{L^p(Q_T)}\le M
\end{equation}
where the value of $\si$, depending on $p$ and $q$, will be discussed later and $\beta=\beta(p,\si)$.\\
Concerning the similarity of the estimate, we want to underline that, again, the constant $M$ remains bounded when $u_0$ \emph{and} $f$ \emph{vary in sets which are bounded and equi-integrable in} $L^\si(\Omega)$ and $L^r(0,T;L^m(\Omega))$, for suitable values of $m$ and $r$ which will be largely commented in the following. We come back to emphasize that such a dependence is due to the \emph{''superlinearity'' growth rate} of the r.h.s..

On the other hand, as far as the ''superlinearity'' threshold of the $q$ growth is concerned, we point out that the parabolic setting carries out noteworthy {differences} compared to the elliptic one. Indeed, the presence of the time derivative $u_t$ in \eqref{pb} influences the relation between $q$ and $p$ and this fact clearly does not occur if we deal with stationary equations. We refer to \cite[Remark $3$]{DNFG} for additional comments on this fact. \\

We will explain soon that the threshold between linear/superlinear growth depends on the values of $p$ we are taking into account.

\section{On the superlinear setting}\label{subsecsharp}
In what follows, we are going to motivate the {superlinear thresholds} we will take into account during the paper. Moreover, we will highlight the link between the $q$ growth of the gradient term and the Lebesgue spaces where the data $u_0$ and $f$ have to be taken in order to have an {existence result}.
In order to explain the assumptions we will require later, let us consider the Cauchy-Dirichlet problem for the standard $p$-Laplace operator
\begin{equation}\label{eqf}
\begin{cases}
\begin{array}{ll}
u_t-\D_p u=f  & \text{in}\,\,Q_T,\\
 u=0  &\text{on}\,\,(0,T)\times\partial \Omega,\\
 u(0,x)=0  &\text{in} \,\,\Omega,
\end{array}
\end{cases}
\end{equation}
where $f\ge 0$ satisfies
\begin{equation*}
f\in L^a(Q_T)\quad\text{with}\quad a\le\left( p \frac{N+2}{N} \right)'.
\end{equation*}
The function $f$ will later play the role of the gradient term $|\N u|^q$. The bound assumed on $a$ allows us to give here a simple explanation through energy estimates. Indeed, for more regular forcing terms a similar explanation should make use of potential theory and Calderon-Zygmund estimates. Since the whole work will turn around energy estimates, it seems better to present a consistent argument below. In any case, the full range of $a$ will be dealt with later in the article. We also restrict the present discussion to the case $u_0=0$ (thus $u\ge 0$) for simplicity. 

Basically, we look for a $L^1(Q_T)$ estimate of a suitable power of the gradient in term of the forcing terms $f$ of the form
\[
\||\N u|^b\|_{L^1(Q_T)}\le c\|f\|_{L^a(Q_T)}^\ro.
\]
Then, when $f=|\N u|^q$, we wonder if 
\[
\||\N u|^b\|_{L^1(Q_T)}\le c\||\N u|^q\|_{L^{a}(Q_T)}^{\ro}
\]
provides a useful estimate and in this case whether  the estimate has a ''sublinear'' or ''superlinear'' character. The first question leads us to the condition
\begin{equation}\label{cond1}
aq\le b
\end{equation}
in order to close the estimate.\\
Then, the ''superlinear'' homogeneity of the estimate holds if
\begin{equation}\label{cond2}
\frac{\ro q}{b}>1.
\end{equation}
In order to find the exponents $b$ and $\ro$ involved, we formally multiply \eqref{eqf} by $\vp(u)=\left( (1+u)^{\nu-1}-1 \right)$ with $\nu=\nu(a)\in (1,2)$ to be fixed. Thus, we have
\begin{equation}\label{ddd}
\integrale \Phi(u(t))\,dx+\iint_{Q_t}\frac{|\N u|^p}{(1+u)^{2-\nu}}\,dx\,ds\le c\iint_{Q_t}f(1+u)^{\nu-1}\,dx\,ds
\end{equation}
with $\Phi(\cdot)$ defined as $\Phi(u)=\int_0^u \left( (1+z)^{\nu-1}-1 \right)\,dz$. An application of H\"older inequality with indices $(a,a')$ and the inequality $c(u^\nu-1)\le 
\Phi(u)$ provide us with the following estimate:
\begin{equation}\label{dis11}
\integrale (u(t))^\nu\,dx+\iint_{Q_t}\frac{|\N u|^p}{(1+u)^{2-\nu}}\,dx\,ds\le c\|f\|_{L^a(Q_t)}\|1+u\|_{L^{a'(\nu-1)}(Q_t)}^{\nu-1}+c.
\end{equation}
We now define $\ds v=(1+u)^{\frac{\nu+p-2}{p}}$ and rewrite \eqref{dis11} in terms of $v$:
\begin{equation}\label{dis4}
\begin{split}
\integrale (v(t))^{\tilde{\nu}}\,dx+\iint_{Q_t}|\N v|^p\,dx\,ds&\le c\|f\|_{L^{a}(Q_t)}\|v\|_{L^{a'\frac{p(\nu-1)}{\nu+p-2}}(Q_t)}^{\frac{p(\nu-1)}{\nu+p-2}}+c\\
\end{split}
\end{equation}
where $\tilde{\nu}=\frac{p\nu}{\nu+p-2}$. Invoking Theorem \ref{teoGN} with $h=\tilde{\nu}$, $\eta=p$ and $w=y$ allows us to deduce that $v\in L^{p\frac{N+\tilde{\nu}}{N}}(Q_t)$. We point out that, in order to apply Gagliardo-Nirenberg inequality, we need $\tilde{\nu}<p^*$: thus, algebraic computations force us to require $\ds p>\frac{2N}{N+\nu}$. We go further imposing $a'(\nu-1)\frac{p}{\nu+p-2}=p\frac{N+\tilde{\nu}}{N}$, otherwise
\begin{equation}\label{nua}
\nu=\nu(a)=N\frac{a(p-1)-(p-2)}{N-p(a-1)}.
\end{equation}
Combining \eqref{disGN=} with \eqref{dis4}, we deduce that
\begin{align*}
\|v\|_{L^{p\frac{N+\tilde{\nu}}{N}}(Q_t)}^{p\frac{N+\tilde{\nu}}{N}}\le 
c\|f\|_{L^a(Q_t)}^{\frac{N+p}{N}}
\|v\|_{L^{p\frac{N+\tilde{\nu}}{N}}(Q_t)}^{p\frac{N+\tilde{\nu}}{N}\frac{N+p}{a'N}}+c
\end{align*}
which, being $\frac{N+p}{a'N}<1$, leads us to the following inequality:
\begin{equation}\label{dis22}
\|v\|_{L^{p\frac{N+\tilde{\nu}}{N}}(Q_t)}^{p\frac{N+\tilde{\nu}}{N}}\le c\|f\|_{L^a(Q_t)}^{\frac{a(N+p)}{N-p(a-1)}}+c.
\end{equation}
We now focus on gradient estimates and consider the gradient power $|\N u|^b$ with $b\le p$. \\
If $a<\left(p\frac{N+2}{N}\right)'$, we let $b<p$ and we multiply $|\N u|^b$ by $(1+u)^{\frac{b(2-\nu)}{p}}(1+u)^{-\frac{b(2-\nu)}{p}}$, so that H\"older's inequality with $\left(\frac{p}{b},\frac{p}{p-b}\right)$ provides us with
\begin{equation}\label{dis3}
\begin{split}
\iint_{Q_t}|\N u|^b\,dx\,ds&\le \left( \iint_{Q_t}\frac{|\N u|^p}{(1+u)^{2-\nu}}\,dx\,ds \right)^{\frac{b}{p}} \left(\iint_{Q_t} (1+u)^{\frac{b(2-\nu)}{p-b}}\,dx\,ds\right)^{\frac{p-b}{p}}\\
&\le c \left( \iint_{Q_t}|\N v|^p\,dx\,ds \right)^{\frac{b}{p}} \left(\iint_{Q_t} v^{\frac{b(2-\nu)}{p-b}\frac{p}{\nu+p-2}}\,dx\,ds\right)^{\frac{p-b}{p}}.
\end{split}
\end{equation}
We thus impose $ \frac{b(2-\nu)}{p-b}\frac{p}{\nu+p-2} =p\frac{N+\tilde{\nu}}{N} $ which, in turn, implies that the exponent $b$ has the following form:
\begin{equation}\label{bnu}
b=b(\nu)=\frac{N(\nu+p-2)+\nu p}{N+\nu}.
\end{equation}
Such a value of $b$, combined with the one previously found for $\nu$ (see \eqref{nua}), becomes
\begin{equation}\label{ba}
b=b(a)=a\frac{p(N+1)-N}{N-a+2}.
\end{equation}
If $a=\left(p\frac{N+2}{N} \right)'$, then we set $b=p$ and we have $\nu=\tilde{\nu}=2$ thanks to the definition of $\tilde{\nu}$ and to \eqref{nua}. Then, \eqref{dis11} implies that
\begin{equation}\label{d33}
\integrale (u(t))^{2}\,dx+\int_{0}^t\|\N u(s)\|_{L^p(\Omega)}^p\,ds\le c\|f\|_{L^{a}(Q_t)}\|1+u\|_{L^{p\frac{N+2}{N}}(Q_t)}+c,
\end{equation}
whether \eqref{dis22} becomes
\begin{equation}\label{d44}
\|u\|_{L^{p\frac{N+2}{N}}(Q_t)}^{p\frac{N+2}{N}}\le c\|f\|_{L^a(Q_t)}^{\frac{p(N+2)(N+p)}{N(p(N+1)-N)}}+c.
\end{equation}
We point out that, since $\nu=2$, then \eqref{bnu} provides us with $b=p$.\\
Note that the assumption $a\le\left(p\frac{N+2}{N} \right)'$ ensures that $b\le p$. \\
The bounds in \eqref{dis4}, \eqref{dis22} and the inequality in \eqref{dis3} give the desired estimate of the gradient we were looking for, namely
\begin{equation*}
\| |\N u|^b\|_{L^1(Q_t)}\le c\|f\|_{L^a(Q_t)}^{\frac{a(N+2)}{N-a+2}}+c=c\|f\|_{L^a(Q_t)}^{\frac{b(N+2)}{p(N+1)-N}}+c
\end{equation*}
where the equality is due to the value of $b$ in \eqref{ba}.\\
Now we let $f=|\N u|^q$, thus our last estimate becomes
\begin{equation*}
\| |\N u|^b\|_{L^1(Q_t)}\le c\||\N u|^q\|_{L^{a}(Q_t)}^{\frac{b(N+2)}{p(N+1)-N}}+c.
\end{equation*} 

We are ready to check the conditions in \eqref{cond1} and \eqref{cond2}. We first require that
\[
aq\le b
\]
which implies that the estimate be closed giving
\begin{equation*}
\| |\N u|^b\|_{L^1(Q_t)}\le c\||\N u|^b\|_{L^{1}(Q_t)}^{\frac{q(N+2)}{p(N+1)-N}}+c.
\end{equation*}
The condition $aq\le b$, combined with \eqref{nua} and \eqref{bnu}, implies that
\begin{equation}\label{ni}
\nu\ge \frac{N(q-(p-1))}{p-q}.
\end{equation}
Moreover, \eqref{nua} and \eqref{ni} lead us to
\[
a\ge\frac{N(q-(p-1))+2q-p}{q}.
\] 
We also notice that the same computation would remain unchanged if we had an initial datum $u_0$ belonging to $L^\si(\Omega)$ with $\ds \si=\frac{N(q-(p-1))}{p-q}$. In particular, we have found the relations between the growth rate $q$ and the summability of $u_0$ and $f$ which are needed in order to have an existence result for \eqref{Dp}. 

As far as the homogeneity of the estimate is concerned, we notice that
\[
q\frac{N+2}{p(N+1)-N}>1
\]
leads us to the following superlinearity threshold for the growth $q$:
\[
q>\frac{N(p-1)+p}{N+2}.
\]

If $1<p\le \frac{2N}{N+\nu}$, we cannot apply Gagliardo-Nirenberg regularity result but we know that, at least, we can impose $\ds a'(\nu-1)=\nu$ in \eqref{dis11} (i.e. $a=\nu$) and $\ds \frac{b(2-\nu)}{p-b}=\nu$ in \eqref{dis3} (i.e. $b=\frac{\nu p}{2}$). We underline that the above conditions can be asked (and thus the estimate below holds) for \emph{every value} of $p$. Then, another gradient estimate is given by
\[
\| |\N u|^{b}\|_{L^1(Q_t)}\le c\|f\|_{L^{\nu}(Q_t)}^{\nu}.
\]
Letting $f=|\N u|^q$ in our last inequality provides that
\begin{equation}\label{sub}
\| |\N u|^{\frac{p}{2}\nu}\|_{L^1(Q_t)}\le c\| |\N u|^q\|_{L^{\nu}(Q_t)}^{\nu}+c.
\end{equation}
Now the estimate is closed whenever we have that 
\[
q\le\frac{p}{2}
\]
and, in this case, we always fall within a sublinear type of estimate. In particular, this means that the r.h.s. of \eqref{Dp} shows a \emph{sublinear} (\emph{linear}) growth for $q<\frac{p}{2}$ ($q=\frac{p}{2}$).\\

We are going to clarify the meaning of the thresholds discovered above.

We first point out that, once we take $\nu=\si=\frac{N(q-(p-1))}{p-q}$, which gives the least (and therefore optimal) integrability condition on the data, then the following double implication holds:
\[
p>\frac{2N}{N+\nu}\quad\Longleftrightarrow\quad q>\frac{p}{2}.
\]
This means that we have to require $q>\max\left\{ \frac{p}{2}, \frac{N(p-1)+p}{N+2} \right\}$ in order to have a superlinear character in the growth of the r.h.s.. We conclude that the \emph{superlinearity thresholds} are
\[
\begin{array}{c}
\ds
q>\frac{p}{2}\qquad\text{if}\qquad 1<p<2,\\
[3mm]
\ds
q>\frac{N(p-1)+p}{N+2}\qquad\text{if}\qquad p\ge 2.
\end{array}
\]
In this range, the superlinear character of the estimate does not allow us to deduce an a priori estimate from the above arguments. The reader will see additional arguments, based on equi-integrability and continuity, in the proof of our result.\\

In the above explanation of the natural thresholds of the problem, we have supposed that the forcing term in \eqref{eqf} fulfils the same regularity in space and time. We now consider the case in which time and spatial summability may be different, i.e. we take $f\in L^{r}(0,T;L^{m}(\Omega))$, and we wonder which is the curve where the exponents $(m,r)$ can live on in order to have an existence result.\\
To this aim, we come back to \eqref{ddd} and, thanks to twice applications of H\"older's inequality with indices $(m,m')$ and $(r,r')$ in the r.h.s., we get
\begin{equation}\label{dis44}
\begin{split}
\integrale (v(t))^{\tilde{\nu}}\,dx+\iint_{Q_t}|\N v|^p\,dx\,ds&\le c\|f\|_{L^{r}(0,T;L^{m}(\Omega))}\|v\|_{L^{\bar{r}}(0,T;L^{\bar{m}}(\Omega))}^{\frac{p(\nu-1)}{\nu+p-2}}+c
\end{split}
\end{equation}
where
\[
\bar{r}=r'(\nu-1)\frac{p}{\nu+p-2}\qquad\text{and}\qquad \bar{m}=m'(\nu-1)\frac{p}{\nu+p-2}.
\]

If $\frac{2N}{N+\nu}<p<N$, we apply again Theorem \ref{teoGN} with $h=\tilde{\nu}$ and $\eta=p$ but we now focus on the case $w\ne y$. Then, we have
\begin{equation}\label{dis2}
\int_0^t \|v(s)\|_{L^{w}(\Omega)}^{y}\,ds\le c\|v\|_{L^\infty(0,T;L^{\tilde{\nu}}(\Omega))}^{y-p}\int_0^t\|\N v(s)\|_{L^p(\Omega)}^p\,ds
\end{equation}
where $(w,y)$ satisfy the relation
\begin{equation}\label{gn}
\frac{N\tilde{\nu}}{w}+\frac{N(p-\tilde{\nu})+p\tilde{\nu}}{y}=N.
\end{equation}
Observe that, if $r\ne m$, then $y\ne w$ and vice versa.\\
We go further requiring $w\ge\bar{m}$ and $y\ge\bar{r}$: in this way, \eqref{gn} leads us to the condition
\begin{equation*}
\frac{N\nu}{m}+\frac{N(p-2)+p\nu}{r}\le N(p-1)+ p\nu
\end{equation*}
i.e. the admissible range of the values $(m,r)$ for initial datum $u_0$ fixed in $L^\nu(\Omega)$.\\
In particular, we have that such a value of $\nu$ fulfils
\begin{equation}\label{nu}
\nu=Nm\frac{r(p-1)-(p-2)}{Nr-pm(r-1)}.
\end{equation}
if $w=\bar{m}$ and $y=\bar{r}$, i.e., when we assume the lowest regularity on $(m,r)$.

Finally, if $1<p\le \frac{2N}{N+\nu}$, the regularity $u\in L^{\infty}(0,T;L^{\nu}(\Omega))$ allows us to take $f\in L^1(0,T;L^{\nu}(\Omega))$ as the best choice. Observe that, letting $\nu=m$ in \eqref{nu}, gives
\[
[p(N+\nu)-2N](r-1)=0
\]
and thus there is continuity between the case $1<p\le \frac{2N}{N+\nu}$ and $\frac{2N}{N+\nu}<p<N$ for what concerns the values $(m,r)$.

\section{Assumptions and statements}\label{secassnot}
\setcounter{equation}{0}

\subsection{Assumptions}
Let us consider the following nonlinear parabolic Cauchy-Dirichlet problem:
\begin{equation}\label{P}\tag{P}
\begin{cases}
\begin{array}{ll}
u_t-\dive  a(t,x,u,\nabla u) =H(t,x,\nabla u) & \text{in}\,\,Q_T,\\ 
 u=0  &\text{on}\,\,(0,T)\times \partial \Omega,\\
 u(0,x)=u_0(x) &\text{in}\,\, \Omega,
\end{array}
\end{cases}
\end{equation}
where $\Omega$ is an open bounded subset of $\mathbb{R}^N$,
$N\ge 2$, $Q_T=(0,T)\times \Omega$,  $a(t,x,u,\xi):(0,T)\times \Omega\times \mathbb{R}\times \mathbb{R}^N\to  \mathbb{R}^N$ and $H(t,x,\xi):(0,T)\times\Omega\times \mathbb{R}^N\to \mathbb{R}$ are Caratheodory functions (i.e. measurable with respect to $(t,x)$ and continuous in $(u,\xi)$).

We assume that the functions $a(t,x,u,\xi)$ and $H(t,x,\xi)$ are such that
\begin{itemize}
\item  the classical Leray-Lions structure conditions hold:
\begin{equation}\label{A1}\tag{A1}
\exists \,\alpha>0:\quad
\alpha|\xi|^p\le a(t,x,u,\xi)\cdot\xi,
\end{equation}
\begin{equation}\label{A2}\tag{A2}
\exists \,\lambda>0:\quad|a(t,x,u,\xi)|\le \lambda[|u|^{p-1}+|\xi|^{p-1}+h(t,x)]\quad\text{where }\,\, h\in L^{p'}(Q_T),
\end{equation}
\begin{equation}\label{A3}\tag{A3}
 (a(t,x,u,\xi)-a(t,x,u,\eta))\cdot(\xi-\eta)> 0 
\end{equation}
with $1<p<N$, for almost every $(t,x)\in Q_T$, for every $u\in \mathbb{R}$ and for every $\xi$, $\eta$ in $\mathbb{R}^N$, $\xi\ne\eta$;
\item the r.h.s. satisfies the growth condition:
\begin{equation}\label{H}\tag{H}
\ds
\begin{array}{c}
\exists \,\,\gamma\,\,\text{s.t. }\,\,
|H(t,x,\xi)|\le \gamma |\xi|^q+f(t,x)\\
[3mm]
\ds
 \text{with}\,\, \max\left\{\frac{p}{2},\frac{p(N+1)-N}{N+2}\right\}<q<p
\end{array}
\end{equation}

for almost every $(t,x)\in Q_T$, for every $\xi$ in $\mathbb{R}^N$ and for some forcing term $f$.
\end{itemize}

Now we detail the hypotheses we make on the data $u_0$ and $f$, taking into account the different ranges of $p$ and $q$. We say that if
\[
1<p<N\quad\text{and}\quad\max\left\{\frac{p}{2},p-\frac{N}{N+1}\right\}<q<p
\]
then we fix
\begin{itemize}
\item the initial datum $u_0$ in the following Lebesgue space:
\begin{equation}\label{ID1}\tag{ID1}
u_0\in L^{\sigma}(\Omega)\quad\text{with}\quad \sigma=\frac{N(q-(p-1))}{p-q};
\end{equation}
\item the forcing term $f$ in $L^r(0,T;L^m(\Omega))$ so that the couple $(m,r)$ verifies
\begin{equation}\label{F1}\tag{F1}
\frac{N\sigma}{m}+\frac{N(p-2)+p\sigma}{r}\le N(p-1)+ p\sigma.
\end{equation}
\end{itemize}
\begin{remark}
Note that if we consider $r=\infty$ in \eqref{F1}, then we have to ask that $m\ge \frac{N(q-(p-1))}{q}$ which is the needed condition to require on the source term of the stationary problem studied in \cite{GMP}.
\end{remark}
If, instead, we have
\[
\frac{2N}{N+1}<p<N\quad\text{and}\quad\max\left\{\frac{p}{2},\frac{p(N+1)-N}{N+2}\right\}<q<p-\frac{N}{N+1}
\] 
we assume that
\begin{itemize}
\item the initial datum $u_0$ is fixed in
 \begin{equation}\label{ID2}\tag{ID2}
u_0\in L^1(\Omega);
\end{equation}
\item the forcing term $f$ satisfies
\begin{equation}\label{F2}\tag{F2}
f\in L^1(Q_T).
\end{equation}
\end{itemize}
Note that the restriction $\frac{2N}{N+1}<p$ is needed in order to have $\frac{p}{2}<p-\frac{N}{N+1}$.\\
The borderline case in which $\ds \frac{2N}{N+1}<p<N$ and $\ds q=p-\frac{N}{N+1}$ will be briefly studied in commented in the Remark \ref{secq}, together with its own assumption.

\subsection{Statements of the main results and comments}

For the sake of clarity, we will collect on the real lines below the intervals of $q$ growth we deal with, emphasising also the assumptions on the data $u_0$ and $f$. 
\begin{center}
\begin{figure}[H]
\begin{tabular}{r|l}
\begin{tikzpicture}
\draw [very thick, dashed,color=red!70!black] (-1,0) to (2,0);
\draw [very thick, color=orange] (-1,-.2) to (2,-.2);
\end{tikzpicture}  
& $u_0\in L^\si(\Omega)$ and $f\in L^r(0,T;L^m(\Omega))$
\\
\begin{tikzpicture}
\draw [very thick, color=yellow] (-1,0) to (2,0);
\end{tikzpicture}
& $u_0\in L^1(\Omega)$ and $f\in L^1(Q_T)$ 
\end{tabular}
\captionsetup{labelformat=empty}
\caption{Colours legend}
\end{figure}
\end{center}
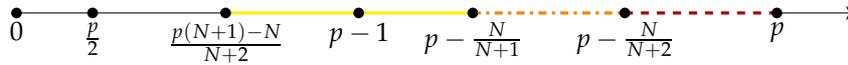
\begin{figure}[H]
\centering
\begin{tikzpicture}
\draw [thin] (0,0) to (2.75,0);
\draw [->,thin] (10,0) -- (11,0);
\draw [very thick,  dashed,color=red!70!black] (8,0) to (10,0);
\draw [very thick,dashdotted, color=orange] (6,0) to (8,0);
\draw [very thick, color=yellow] (2.75,0) to (6,0);
\fill (0,0) circle (2pt) node[below] 
{$0$};
\fill (1,0) circle (2pt) node[below] {$\frac{p}{2}$};
\fill (2.75,0) circle (2pt) node[below] {$\frac{p(N+1)-N}{N+2}$};
\fill (4.5,0) circle (2pt) node[below] {$p-1$};
\fill (6,0) circle (2pt) node[below] {$p-\frac{N}{N+1}$};
\fill (8,0) circle (2pt) node[below] {$p-\frac{N}{N+2}$};
\fill (10,0) circle (2pt) node[below] {$p$};
\end{tikzpicture}
\caption{The case $2\le p<N$}
\label{fig:1}
\end{figure}
Referring to the Figure \ref{fig:1} above, we point out that if $q\ge p-\frac{N}{N+2}$ then, according to \eqref{ID1}--\eqref{F1}, we have $\si\ge 2$, hence $u_0\in L^2(\Omega)$ and $f\in L^{p'}(0,T;W^{-1,p'}(\Omega))$; as $p-\frac{N}{N+1}<q<p-\frac{N}{N+2}$, then we have $1<\si<2$ and $f$ does not necessarily belong to the dual space. Finally, here the superlinearity threshold is given by $q=\frac{p(N+1)-N}{N+2}$ and, if $\frac{p(N+1)-N}{N+2}<q<p-\frac{N}{N+1}$, $L^1$ data are admitted.
\begin{figure}[H]
\centering
\begin{tikzpicture}
\draw [thin] (0,0) to (4.75,0);
\draw [->,thin] (10,0) -- (11,0);
\draw [very thick,  dashed,color=red!70!black] (8,0) to (10,0);
\draw [very thick, dashdotted,color=orange] (6,0) to (8,0);
\draw [very thick, color=yellow] (4.75,0) to (6,0);
\fill (0,0) circle (2pt) node[below] 
{$0$};
\fill (1.25,0) circle (2pt) node[below] 
{$p-1$};
\fill (2.85,0) circle (2pt) node[below] 
{$\frac{p(N+1)-N}{N+2}$};
\fill (4.75,0) circle (2pt) node[below] {$\frac{p}{2}$};
\fill (6,0) circle (2pt) node[below] {$p-\frac{N}{N+1}$};
\fill (8,0) circle (2pt) node[below] {$p-\frac{N}{N+2}$};
\fill (10,0) circle (2pt) node[below] {$p$};
\end{tikzpicture}
\caption{The case $\frac{2N}{N+1}<p< 2$}
\label{fig:2}
\end{figure}
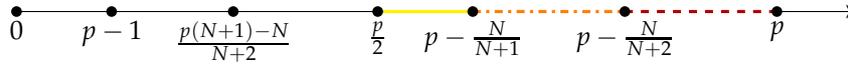
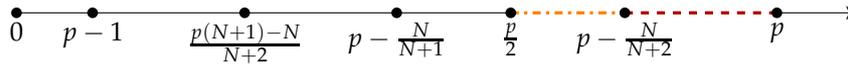
\begin{figure}[H]
\centering
\begin{tikzpicture}
\draw [thin] (0,0) to (6.5,0);
\draw [->,thin] (10,0) -- (11,0);
\draw [very thick,  dashed,color=red!70!black] (8,0) to (10,0);
\draw [very thick,dashdotted, color=orange] (6.5,0) to (8,0);
\fill (0,0) circle (2pt) node[below] 
{$0$};
\fill (1,0) circle (2pt) node[below] {$p-1$};
\fill (3,0) circle (2pt) node[below] {$\frac{p(N+1)-N}{N+2}$};
\fill (5,0) circle (2pt) node[below] {$p-\frac{N}{N+1}$};
\fill (6.5,0) circle (2pt) node[below] {$\frac{p}{2}$};
\fill (8,0) circle (2pt) node[below] {$p-\frac{N}{N+2}$};
\fill (10,0) circle (2pt) node[below] {$p$};
\end{tikzpicture}
\caption{The case $\frac{2N}{N+2}<p\le \frac{2N}{N+1}$}
\label{fig:3}
\end{figure}
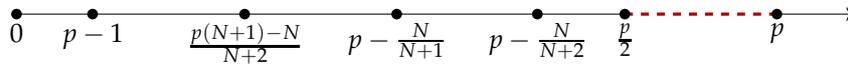
\begin{figure}[H]
\centering
\begin{tikzpicture}
\draw [thin] (0,0) to (8,0);
\draw [->,thin] (10,0) -- (11,0);
\draw [very thick,  dashed,color=red!70!black] (8,0) to (10,0);
\fill (0,0) circle (2pt) node[below] 
{$0$};
\fill (1,0) circle (2pt) node[below] {$p-1$};
\fill (3,0) circle (2pt) node[below] {$\frac{p(N+1)-N}{N+2}$};
\fill (5,0) circle (2pt) node[below] {$p-\frac{N}{N+1}$};
\fill (6.85,0) circle (2pt) node[below] {$p-\frac{N}{N+2}$};
\fill (8,0) circle (2pt) node[below] {$\frac{p}{2}$};
\fill (10,0) circle (2pt) node[below] {$p$};
\end{tikzpicture}
\caption{The case $1<p\le \frac{2N}{N+2}$}
\label{fig:4}
\end{figure}
Figures \ref{fig:2}, \ref{fig:3} and \ref{fig:4} show that, as $p$ becomes smaller than two, the superlinearity threshold changes into $q=\frac{p}{2}$. We underline that, depending on the ranges of $p$ above presented, such a value is smaller/greater with respect to the $L^1$ and $L^2$ thresholds of the initial data (namely, $q=p-\frac{N}{N+1}$ and $q=p-\frac{N}{N+2}$ respectively).\\

Roughly speaking, the figures above tell us that
\begin{enumerate}
\item[({\bf A})] if we make \emph{sharp assumptions} on the data, i.e. we assume \eqref{ID1} and \eqref{F1}, we expect to have at least
\begin{enumerate}
\item[({\bf A.1})] \emph{finite energy solutions} (see the red zone) if $1<p<N$ and either
\[
p-\frac{N}{N+2}\le q<p \,\,\text{ if }\,\, \frac{2N}{N+2}<p<N
\]
or
\[
\frac{p}{2}<q<p \,\,\text{ if }\,\, 1<p\le\frac{2N}{N+2}
\]
occurs;
\item[({\bf A.2})] \emph{infinite energy solutions} (see the orange zone) with $L^\si(\Omega)$ initial data, $\si>1$, if we assume
\[
\frac{2N}{N+2}<p<N\quad\text{and}\quad \max\left\{\frac{p}{2},p-\frac{N}{N+1} \right\}< q<p-\frac{N}{N+2}.
\]
\end{enumerate} 
\item[({\bf B})]  \emph{Infinite energy solutions} with $L^1$ data are admitted if
\[
\frac{2N}{N+1}<p<N \quad\text{and}\quad \max\left\{\frac{p}{2},\frac{p(N+1)-N}{N+2} \right\}< q<p-\frac{N}{N+1}.
\]
\end{enumerate}
As $q$ becomes too small, otherwise,  either
\[
p\ge 2\quad\text{and}\quad 0<q\le \frac{p(N+1)-N}{N+2}
\]
or
\[
1<p<2\quad\text{and}\quad 0<q\le \frac{p}{2}
\]
we fall within the sublinear growth case. This means that the problem behaves differently as we will show in Section\ref{secsub}.\\

Referring to the sketch given above, we here collect the statements of our main results.

\begin{theo}[Red zone]\label{1}
Let $\ds1<p<N$ and assume \eqref{A1}, \eqref{A2}, \eqref{A3}, \eqref{ID1}, \eqref{F1} and \eqref{H} with either
\[
p-\frac{N}{N+2}\le q<p \quad\text{if}\quad \frac{2N}{N+2}<p<N
\]
or
\[
\frac{p}{2}<q<p \quad\text{if}\quad 1<p\le\frac{2N}{N+2}.
\]
Then, there exists at least one finite energy solution of the problem \eqref{pb} (see Definition \ref{sol}). Moreover, this solution fulfils the following regularities:
\[
|u|^\beta\in L^p(0,T;W^{1,p}_0(\Omega))\quad\text{with}\quad \beta=\frac{\sigma-2+p}{p}
\] 
and
\[
u\in C([0,T];L^\sigma(\Omega)).
\]
\end{theo}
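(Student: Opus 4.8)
The plan is to obtain the solution as a limit of approximate problems with bounded, regularized data and nonlinearity, to derive the uniform a priori estimate announced in the Introduction, and to pass to the limit via monotonicity and equi-integrability. First I would approximate: for $n\in\mathbb{N}$ replace $H$ by the bounded Carath\'eodory truncation $H_n=H/(1+n^{-1}|H|)$, which still satisfies $|H_n|\le\gamma|\N u|^q+f$, and replace $u_0$, $f$ by bounded approximations $u_{0,n}\to u_0$ in $L^\sigma(\Omega)$ and $f_n\to f$ in $L^r(0,T;L^m(\Omega))$, chosen to be equi-integrable in those spaces. For each fixed $n$ the right-hand side is bounded, so a weak solution $u_n\in L^p(0,T;W^{1,p}_0(\Omega))\cap C([0,T];L^2(\Omega))$ exists by the classical Leray--Lions theory for pseudomonotone operators.

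Next I would establish the uniform estimate. Testing the equation for $u_n$ with $\vp(u_n)=\big((1+|u_n|)^{\sigma-1}-1\big)\,\mathrm{sign}(u_n)$ and running the computation \eqref{ddd}--\eqref{dis4} with $\nu=\sigma$ --- the superlinearity condition in \eqref{H} being exactly what guarantees $\sigma<p^*$ so that Theorem \ref{teoGN} applies --- I would reach an inequality of the form
\[
G\ :=\ \|u_n\|_{L^\infty(0,T;L^\sigma(\Omega))}^\sigma+\iint_{Q_T}\big|\N[(1+|u_n|)^\beta]\big|^p \ \le\ \|u_{0,n}\|_{L^\sigma(\Omega)}^\sigma+C\,G^\theta+C,
\]
with $\theta>1$ precisely because we sit in the superlinear regime; the gradient contribution $\gamma|\N u_n|^q$ is split by Young's inequality, its principal part being absorbed by the coercivity \eqref{A1} and the remainder controlled through \eqref{dis44} and \eqref{F1}.

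The main obstacle is that, since $\theta>1$, this inequality is self-referential and cannot be closed by mere absorption --- the very difficulty flagged in the Introduction. I would resolve it by a continuity argument in time. Writing $G(t)$ for the same quantity computed on $Q_t$, the map $t\mapsto G(t)$ is continuous and nondecreasing, and a localized version of the estimate reads $G(t)\le G(s)+A(s,t)+C\big(G(t)-G(s)\big)^\theta$, where $A(s,t)$ gathers the contribution of $f_n$ on $(s,t)$ and tends to $0$ as $t-s\to0$ uniformly in $n$ by equi-integrability. Choosing a partition $0=t_0<\dots<t_k=T$ fine enough that each increment stays below the threshold $\max_{s}(s-C\,s^\theta)$, continuity forces $G$ to remain on the lower branch on each subinterval, and summing over $i$ produces a bound $G\le M$ uniform in $n$, depending on the data only through their equi-integrability.

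Finally I would pass to the limit. The uniform bounds give weak convergence of $(1+|u_n|)^\beta$ in $L^p(0,T;W^{1,p}_0(\Omega))$ and, after estimating $u_{n,t}$ in $L^1(Q_T)+L^{p'}(0,T;W^{-1,p'}(\Omega))$, the Aubin--Simon lemma yields $u_n\to u$ strongly in $L^p(Q_T)$ and a.e. To handle the nonlinear terms I would then prove $\N u_n\to\N u$ a.e. by the Boccardo--Murat technique, testing the difference of equations with $T_k(u_n-u)$ and invoking the strict monotonicity \eqref{A3}; since $q<p$ and $\N[(1+|u_n|)^\beta]$ is bounded in $L^p$, the family $|\N u_n|^q$ is equi-integrable, so Vitali's theorem lets one pass to the limit in $H_n(t,x,\N u_n)$. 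The regularity $|u|^\beta\in L^p(0,T;W^{1,p}_0(\Omega))$ follows from the uniform bound by weak lower semicontinuity, while $u\in C([0,T];L^\sigma(\Omega))$ and the attainment of $u_0$ are recovered from the energy estimate together with the strong convergence, treating the time derivative in the usual way.
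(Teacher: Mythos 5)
Your overall scheme (approximation, a priori estimate, Aubin--Simon compactness, Boccardo--Murat a.e.\ convergence of gradients, Vitali) matches the paper's, and the limiting steps are sound. The gap is in the only genuinely delicate step: the closure of the superlinear a priori estimate. Your localized inequality
\[
G(t)\ \le\ G(s)+A(s,t)+C\bigl(G(t)-G(s)\bigr)^{\theta}
\]
is not what the energy computation produces. Testing on a slab $(s,t)$ and applying H\"older, Sobolev and Theorem \ref{teoGN} \emph{on that slab}, the superlinear term comes out as
\[
C\,\|u_n\|_{L^\infty(s,t;L^{\sigma}(\Omega))}^{a}\,\iint_{Q_{s,t}}\bigl|\N[(1+|u_n|)^{\beta}]\bigr|^{p}\,dx\,d\tau ,\qquad a>0,
\]
that is, a \emph{full} sup-norm factor times the gradient increment --- not a power of the increment $G(t)-G(s)$ alone. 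The factor $\|u_n\|_{L^\infty(s,t;L^\sigma)}$ is bounded below by a quantity of the order of $\|u_0\|_{L^\sigma(\Omega)}$, which is not small, and refining the time partition does nothing to shrink it: the absorption already fails on the first subinterval $(0,t_1)$ unless the data themselves are small. In other words, the inequality is superlinear in the total size of $G$, not in its increments, so the "lower branch" argument cannot be started, and the estimate remains circular.

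The paper resolves exactly this point by producing the indispensable smallness through a \emph{level} truncation rather than a \emph{time} partition: in Theorem \ref{sapp} one tests (after the change of variable $w_n=e^{-t}u_n$, whose zero-order term helps with $f$) with $|G_k(w_n)|^{\sigma-2}G_k(w_n)$, so that the resulting inequality \eqref{disfin} concerns only the tails. By equi-integrability of $|u_0|^{\sigma}$ and of $f$ in $L^r(0,T;L^m(\Omega))$ one chooses $k_0$ so large that the data terms are below $\delta_0/2$ (conditions \eqref{u0}--\eqref{f0}), where $\delta_0$ is fixed so that $\|G_k(w_n)\|_{L^\infty(0,t;L^\sigma)}^{\sigma}\le\delta_0$ makes the superlinear coefficient absorbable by the coercivity; a continuity-in-time argument (the $T^*$ argument, using $w_n\in C([0,T];L^\sigma(\Omega))$) then propagates this smallness from $t=0$ to the whole of $[0,T]$, yielding \eqref{thGkpp}. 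The bounded part $T_{k_0}(w_n)$ is estimated afterwards by a separate, easy test-function argument that uses the tail bound. This decomposition $u_n=T_{k_0}(u_n)+G_{k_0}(u_n)$ is not a technicality: it is the mechanism by which equi-integrability of the data is converted into smallness, and without it (as in your proposal, which tests with $\bigl((1+|u_n|)^{\sigma-1}-1\bigr)\mathrm{sign}(u_n)$ on the whole solution) the superlinear estimate cannot be closed. Once \eqref{disapp} is available, your remaining steps go through essentially as in the paper, except that the continuity $u\in C([0,T];L^\sigma(\Omega))$ is also obtained there from the tail estimate \eqref{datotp} (passed to the limit) combined with Vitali's theorem and the $C([0,T];L^1(\Omega))$ trace result of \cite{P}, so it too relies on the level-truncation structure.
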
 

\noindent
We point out that $\beta\ge 1$ if $q\ge p-\frac{N}{N+2}$ and $p>\frac{2N}{N+2}$, $\beta>1$ if $q>\frac{p}{2}$ and
$p\le\frac{2N}{N+2}$.

\begin{theo}[Orange zone]\label{2}
Let $\ds\frac{2N}{N+2}<p<N$ and assume \eqref{A1}, \eqref{A2}, \eqref{A3}, \eqref{ID1}, \eqref{F1} and \eqref{H} with
\[
\max\left\{ \frac{p}{2},p-\frac{N}{N+1}\right\}<  q<p-\frac{N}{N+2}.
\]
Then, there exists at least one solution $u$ of the problem \eqref{pb} (see Definition \ref{defrin}). Moreover, such a solution fulfils the following regularities:
\begin{equation*}
(1+|u|)^{\beta-1}u\in L^p(0,T;W_0^{1,p}(\Omega))\quad\text{with}\quad \beta=\frac{\sigma-2+p}{p}
\end{equation*}
and
\[
u\in C([0,T];L^{\sigma}(\Omega)).
\]
\end{theo}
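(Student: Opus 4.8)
The plan is to obtain the solution of Definition~\ref{defrin} as a limit of approximating problems, following the test-function calculus of Section~\ref{subsecsharp}; the decisive work is split between a \emph{superlinear} a priori estimate and an almost-everywhere gradient convergence carried out on truncations. First I would regularize: take $u_{0,n}\in L^\infty(\Omega)$ with $u_{0,n}\to u_0$ in $L^\si(\Omega)$, a bounded truncation $f_n\to f$ in $L^r(0,T;L^m(\Omega))$, and the bounded Carath\'eodory nonlinearity $H_n(t,x,\xi)=H(t,x,\xi)\big/\big(1+\tfrac1n|H(t,x,\xi)|\big)$, which still obeys $|H_n(t,x,\xi)|\le\ga|\xi|^q+|f(t,x)|$. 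For each fixed $n$ the right-hand side is bounded, so the approximate problem has a weak solution $u_n\in L^p(0,T;W^{1,p}_0(\Omega))\cap L^\infty(Q_T)$ by the classical Leray--Lions theory under \eqref{A1}--\eqref{A3}.

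The core is the uniform estimate. Testing the $n$-th equation with $\vp(u_n)=\big((1+|u_n|)^{\si-1}-1\big)\operatorname{sign}(u_n)$ and repeating the computation \eqref{ddd}--\eqref{dis44} verbatim (H\"older in the conjugate pairs $(m,m')$, $(r,r')$ followed by the Gagliardo--Nirenberg estimate of Theorem~\ref{teoGN}), I would reach
\begin{equation*}
\|u_n\|_{L^\infty(0,T;L^\si(\Omega))}+\big\|\N\big[(1+|u_n|)^{\beta}\big]\big\|_{L^p(Q_T)}\le M,\qquad \beta=\frac{\si-2+p}{p}.
\end{equation*}
The essential difficulty, and what distinguishes the orange regime from the elliptic problem of \cite{GMP}, is that since $q$ lies above the superlinearity threshold, inserting $f=|\N u_n|^q$ produces an inequality of the form $G(t)\le c_0+c_1\,G(t)^{\theta}$ with $\theta=\frac{q(N+2)}{p(N+1)-N}>1$, where $G(t)$ is the weighted energy on $Q_t$; this cannot be closed by absorption. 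The hard part will be the continuity-and-equi-integrability argument announced in Section~\ref{subsecsharp}: one splits $[0,T]$ into finitely many subintervals on which $f$ is small in $L^r(0,T;L^m(\Omega))$ (using its absolute continuity), closes the superlinear inequality on each subinterval by a first-crossing argument for the non-decreasing continuous map $t\mapsto G(t)$, and chains the resulting bounds. No global smallness of the data is needed precisely because $\partial_t$ supplies the dissipative $L^\infty(0,T;L^\si(\Omega))$ control, playing here the role of the coefficient $\al_0>0$ in the stationary setting.

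From this bound I would extract compactness. Gagliardo--Nirenberg interpolation between $L^\infty(L^\si)$ and the $L^p$ gradient control bounds $(1+|u_n|)^\beta$ in the mixed spaces of \eqref{gn}, and, since $\si<2$ forces $\beta<1$ (infinite energy), it bounds $|\N u_n|$ in a space $L^{b}(Q_T)$ with $1<b<p$ and, in the orange range $\max\{p/2,p-\frac{N}{N+1}\}<q<p-\frac{N}{N+2}$, with $b>q$; hence $|\N u_n|^q$ is bounded in $L^{b/q}$ with $b/q>1$ and is therefore \emph{equi-integrable}. Each truncation $T_k(u_n)$ is bounded in $L^p(0,T;W^{1,p}_0(\Omega))$; estimating $\partial_t u_n$ in $L^1(Q_T)+L^{p'}(0,T;W^{-1,p'}(\Omega))$ and applying an Aubin--Simon compactness argument for truncations yields, along a subsequence, $u_n\to u$ in $L^1(Q_T)$ and a.e., with $T_k(u_n)\rightharpoonup T_k(u)$ in $L^p(0,T;W^{1,p}_0(\Omega))$. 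The remaining technical step is the a.e. convergence $\N u_n\to\N u$: I would prove strong convergence of $\N T_k(u_n)$ in $L^p$ by the Boccardo--Murat--Porretta monotonicity method, testing with $T_\delta\big(u_n-T_k(u)\big)$ and using the strict monotonicity \eqref{A3}; here the gradient term causes no obstruction because the test function is bounded and tends to $0$ a.e., so $\iint H_n(t,x,\N u_n)\,T_\delta(u_n-T_k(u))\to0$ by the equi-integrability established above.

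With $\N u_n\to\N u$ a.e., the passage to the limit is routine: \eqref{A2} together with Vitali's theorem identifies $a(t,x,u_n,\N u_n)\rightharpoonup a(t,x,u,\N u)$, while equi-integrability and a.e. convergence give $H_n(t,x,\N u_n)\to H(t,x,\N u)$ in $L^1(Q_T)$. It then remains to verify the two structural requirements of Definition~\ref{defrin}: the renormalization/decay condition $\lim_n\frac1n\iint_{\{n\le|u_n|\le2n\}}|\N u_n|^p\,dx\,dt=0$, obtained by testing with the level-set cut-off supported on $\{n\le|u_n|\le2n\}$ and using the energy bound, and the time-continuity $u\in C([0,T];L^\si(\Omega))$, which follows from the uniform $L^\infty(L^\si)$ bound and the equi-integrability by upgrading weak continuity to strong continuity in $L^\si$. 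Passing to the limit in the uniform bound finally gives the stated regularity $(1+|u|)^{\beta-1}u\in L^p(0,T;W^{1,p}_0(\Omega))$, completing the proof.
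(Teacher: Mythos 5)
Your overall architecture (approximation, superlinear a priori inequality, $L^b$ gradient bound with $b>q$, a.e.\ gradient convergence by the monotonicity method, Vitali, passage to the limit in the renormalized formulation) matches the paper's, and the compactness/limit part would go through essentially as you describe. The genuine gap is in how you propose to close the superlinear a priori inequality. A first-crossing argument for $G(t)\le c_0+c_1G(t)^{\theta}$, $\theta>1$, works only if the additive constant $c_0$ lies \emph{below} the critical value $F^*$ of $Y\mapsto Y-c_1Y^{\theta}$, whose size is dictated by $\gamma$, $\al$, $N$, $p$, $q$ and cannot be tuned. Splitting $[0,T]$ into subintervals where $\|f\|_{L^r(L^m)}$ is small shrinks only the $f$-contribution to $c_0$; on the very first subinterval you still have $G(0)\ge c\|u_0\|_{L^\si(\Omega)}^{\si}$, a fixed and possibly large number, so the inequality cannot be closed at $t=0^+$ and the chaining never starts. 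In other words, your scheme silently requires a size condition on $u_0$ that the theorem does not assume; the time derivative (your ``$\al_0>0$ analogue'') supplies the continuity of $t\mapsto\|\cdot\|_{L^\si(\Omega)}$ needed for the crossing argument, but it does not make the initial datum small.

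What the paper does instead --- and what is missing from your proposal --- is a \emph{level-set} decomposition rather than a time decomposition: after the change of variable $w_n=e^{-t}u_n$, the whole superlinear machinery is applied to $G_k(w_n)$ (tested with a regularization of $|G_k(w_n)|^{\si-2}G_k(w_n)$, needed since $1<\si<2$), with $k\ge k_0$ chosen so large that, by equi-integrability, \emph{both} $\|G_{k_0}(u_0)\|_{L^\si(\Omega)}^{\si}$ and $\||f|\chi_{\{|f|>k_0\}}\|_{L^r(0,T;L^m(\Omega))}$ fall below the critical threshold $\delta_0$; the continuity/first-crossing argument (via $T^*=\sup\{s:\|G_k(w_n(t))\|_{L^\si}^\si\le\delta_0\ \forall t\le s\}$) then yields a bound on $G_{k_0}(w_n)$ uniform in $n$ with no restriction on the size of the data, and the remaining bounded piece $T_{k_0}(w_n)$ is estimated afterwards by routine Young-inequality arguments, where superlinearity causes no obstruction because $T_{k_0}(w_n)$ is bounded by $k_0$. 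A minor additional remark: the energy-decay condition $\lim_n\frac1n\iint_{\{n\le|u|\le2n\}}a\cdot\N u=0$ you propose to verify belongs to Definition \ref{defrin2} ($L^1$ data), not to Definition \ref{defrin}; here it is automatic from the regularity $(1+|u|)^{\beta-1}u\in L^p(0,T;W^{1,p}_0(\Omega))$, so verifying it is harmless but not required.
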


\noindent
Note that having $p-\frac{N}{N+1}<q<p-\frac{N}{N+2}$ implies that $1<\si<2$. The restriction $p>\frac{2N}{N+2}$ is necessary in order to have $\frac{p}{2}<p-\frac{N}{N+2}$.

\begin{theo}[Yellow zone]\label{3}
Let $\ds\frac{2N}{N+1}<p<N$ and assume \eqref{A1}, \eqref{A2}, \eqref{A3}, \eqref{ID2}, \eqref{F2} and  \eqref{H} with
\[
\frac{p(N+1)-N}{N+2}<  q< p-\frac{N}{N+1}.
\]
Then, there exists at least one renormalized solution $u$ of the problem \eqref{pb} as in Definition \ref{defrin2}. Moreover, the following regularity holds:
\[
u\in C([0,T];L^1(\Omega)).
\]
\end{theo}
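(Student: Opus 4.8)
The plan is to obtain the solution as the limit of a sequence of approximate problems with smooth, bounded data, using the renormalized-solution machinery adapted to the superlinear gradient term. First I would regularize the data, choosing $u_{0n}\in C_c^\infty(\Omega)$ with $u_{0n}\to u_0$ in $L^1(\Omega)$ and $f_n\in L^\infty(Q_T)$ with $f_n\to f$ in $L^1(Q_T)$ (both possible by \eqref{ID2} and \eqref{F2}), and I would replace the right-hand side by the bounded truncation $H_n(t,x,\xi)=H(t,x,\xi)/(1+\tfrac1n|H(t,x,\xi)|)$, so that $|H_n|\le n$. For each fixed $n$ the approximate problem has bounded data and a bounded, Carath\'eodory nonlinearity satisfying \eqref{A1}--\eqref{A3}, hence it admits a finite-energy weak solution $u_n\in L^p(0,T;W_0^{1,p}(\Omega))\cap C([0,T];L^2(\Omega))$ by the classical Leray--Lions theory.

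The heart of the matter is a priori bounds on $u_n$ that are uniform in $n$. I would test the approximate equation with the truncations $T_k(u_n)$ and with powers of $(1+|u_n|)$ as in Section \ref{subsecsharp}, to produce Boccardo--Gallou\"et type estimates: $\|\N T_k(u_n)\|_{L^p(Q_T)}^p\le Ck$, a uniform $L^\infty(0,T;L^1(\Omega))$ bound, and, via Theorem \ref{teoGN} together with the level-set decomposition, a Marcinkiewicz estimate placing $|\N u_n|$ in $L^s(Q_T)$ for every $s<p-\tfrac{N}{N+1}$. Since the yellow-zone range forces precisely $q<p-\tfrac{N}{N+1}$, this bound gives $|\N u_n|^q$ bounded in $L^\theta(Q_T)$ for some $\theta>1$, which is the decisive \emph{equi-integrability} of the gradient term. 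Here the superlinearity ($q>\tfrac{p(N+1)-N}{N+2}$) prevents a direct closure: the inequality for the relevant gradient norm has the form $X\le cX^\vartheta+c$ with $\vartheta>1$. I would handle this by a continuity-in-time argument, running the estimate first on a short interval $[0,\tau]$ on which the norm is small, bootstrapping, and then advancing $\tau$ by means of the $L^\infty(0,T;L^1(\Omega))$ control, so that the estimate survives on all of $[0,T]$ for data varying in bounded, equi-integrable sets.

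Next I would extract compactness. The uniform truncation bounds together with a control of $(u_n)_t$ in $L^1(Q_T)+L^{p'}(0,T;W^{-1,p'}(\Omega))$ give, through an Aubin--Lions--Simon type lemma, strong convergence $u_n\to u$ in $L^1(Q_T)$ and a.e.\ in $Q_T$, along a subsequence. The most delicate point is the a.e.\ convergence of the gradients $\N u_n\to \N u$: I would establish it by the monotonicity argument of Boccardo--Murat on the truncated functions, using the strict monotonicity \eqref{A3} and a Landes-type time regularization of $T_k(u)$ to compensate for the poor time regularity. Once a.e.\ gradient convergence is in hand, the equi-integrability of $|\N u_n|^q$ obtained above lets me pass to the limit in the source by Vitali's theorem, yielding $H_n(t,x,\N u_n)\to H(t,x,\N u)$ in $L^1(Q_T)$.

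Finally I would verify the renormalized formulation of Definition \ref{defrin2}, testing with $S(u_n)\vp$ for $S\in W^{1,\infty}(\mathbb{R})$ with compactly supported derivative; the essential ingredient is the uniform energy-vanishing condition $\lim_{k\to\infty}\iint_{\{k\le|u_n|\le k+1\}}|\N u_n|^p\,dx\,ds=0$, which follows from the truncation estimates and allows the limit passage in the divergence term. The continuity $u\in C([0,T];L^1(\Omega))$ is then a standard consequence of the renormalized formulation combined with the $L^\infty(0,T;L^1(\Omega))$ bound. The hard part throughout will be the uniform closure of the \emph{superlinear} a priori estimate while simultaneously gaining equi-integrability of $|\N u_n|^q$: this is exactly where the repulsive, superlinear gradient makes the $L^1$-data theory substantially more delicate than in the sublinear or natural-growth regimes, since no favourable sign of $\ga$ is available to absorb the gradient term.
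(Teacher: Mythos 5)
Your overall architecture --- approximation with truncated data, Boccardo--Gallou\"et truncation estimates, a Marcinkiewicz bound giving $|\N u_n|\in M^{\frac{p(N+1)-N}{N+1}}(Q_T)$ and hence equi-integrability of $|\N u_n|^q$ since $q<p-\frac{N}{N+1}$, a.e.\ convergence of the gradients via the monotonicity method of \cite{BDGO}, Vitali's theorem for the source, and the renormalized formulation --- coincides with the paper's scheme (Theorem \ref{sapp3}, Proposition \ref{abc3}, Theorem \ref{Trin2}). The gap is in the one step you yourself flag as decisive: closing the self-referential superlinear inequality $X\le c_1X^\vartheta+c_2$ with $\vartheta=\frac{q(N+2)}{p(N+1)-N}>1$. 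Your mechanism --- localize in time on $[0,\tau]$ where ``the norm is small'' and advance $\tau$ using the $L^\infty(0,T;L^1(\Omega))$ control --- cannot work for general $L^1$ data. The constant $c_2$ carries $\|u_0\|_{L^1(\Omega)}+\|f\|_{L^1(Q_T)}$, and the $u_0$ contribution does not shrink as $\tau\to 0$; whenever $c_2\ge F^*:=\max_{Y\ge 0}\bigl(Y-c_1Y^{\vartheta}\bigr)$, the inequality $X-c_1X^{\vartheta}\le c_2$ is satisfied by \emph{every} $X\ge 0$, i.e.\ it is vacuous, and no continuation or bootstrap argument, in time or otherwise, can extract a bound from a vacuous inequality. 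As described, your argument proves the theorem only under a smallness condition on the data. Note also that the multiplicative absorption which powers the $T^*$-in-time argument in the red and orange zones (the factor $\|G_k(w_n)\|_{L^{\infty}(0,t;L^{\si}(\Omega))}^{\si\frac{p-q}{N}}$ multiplying the gradient energy) is unavailable at $\si=1$, which is exactly why the paper does not use a time-continuation in this zone.

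What the paper does instead (Theorem \ref{sapp3}, following \cite{GMP}): it runs the entire estimate on $G_k(w_n)$ rather than on $u_n$, so that the data entering $c_2$ are only the \emph{tails} $h_k=\|f\chi_{\{|f|>k\}}\|_{L^1(Q_T)}+\|G_k(u_0)\|_{L^1(\Omega)}$; by equi-integrability one fixes $k^*$ with $c_2h_k<F^*$ for all $k\ge k^*$; then $F(Y_{n,k})\le c_2h_k$ forces the dichotomy $Y_{n,k}\le Z_1$ or $Y_{n,k}\ge Z_2$, and the continuity of $k\mapsto Y_{n,k}$ together with $Y_{n,k}\to 0$ as $k\to\infty$ selects the small root, yielding $\iint_{Q_T}|\N G_k(w_n)|^q\,dx\,dt\le c$ for $k\ge k^*$. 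The bounded remainder $T_{k^*}(u_n)$ is then estimated separately, testing with $T_{k^*}(u_n)$ and applying Young's inequality. If you insert this level decomposition, a continuity argument like yours can be salvaged (one may equally run the continuation in $\tau\mapsto Y_{n,k}(\tau)$ for fixed $k\ge k^*$ instead of in $k$), but the smallness must come from high truncation levels and equi-integrability of the data, not from shortness of the time interval.
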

\noindent
The result in Theorem \ref{3} could as well be referred to case $p\le \frac{2N}{N+1}$. However, this case is completely contained in Theorem \ref{Tsub} since it corresponds to a sublinear growth.

\subsection{Plan of the paper}

We discuss the finite energy case in Section \ref{secplap}, i.e. we require that the gradient growth rate and the data are as in ({\bf{A.1}}).
Note that the ranges of $q$ and the definition of $\si$ ensure that $\sigma\ge 2$. This Section contains the proof of Theorem \ref{1}.

Section \ref{secrin} is devoted to the study of the growth interval in {({\bf {A.2}})}.
Since this range of $q$ implies that $1< \sigma<2$, solutions will have not finite energy. The proof of Theorem \ref{2} is here presented.\\
The particular case $\ds q=p-\frac{N}{N+1}$ is briefly outlined at the end of this Section, see Remark \ref{secq}.

We discuss the last superlinear interval {({\bf B})}
in Section \ref{secrin2} where renormalized solutions are considered. Note that this range of $q$ implies that $\sigma$, $m$ and $r$ would become strictly less than $1$. This means that measure data can be considered. However, we will take into account only $L^1$ data. Theorem \ref{3} is here proved.


We dedicate our last Section \ref{secsub} to the study of an existence result (see Theorem \ref{Tsub}) in the case of small values of $p$, namely $\ds 1<p<2$, and when the r.h.s. exhibits a  sublinear growth, i.e. we assume that  
\[
0<q\le\frac{p}{2}.
\]
In this way, we fill a gap with the results existing in the literature \cite{Po,DNFG} devoted to the sublinear or linear growth of $H(t,x,\xi)$.

Finally, we conclude  collecting in the conclusive Appendices some needed tools and useful results. More precisely, Appendix \ref{secpbn} contains the definition of the approximating problem we will consider during the paper and some preliminary results. Lemmas concerning Marcinkievicz estimates are contained in Appendix \ref{marc}. Appendix \ref{sharp} is devoted to the proof of a nonexistence result when initial data $u_0\in L^\nu(\Omega)$ for $\nu<\si$ and $f=0$ are considered.

\subsection*{Notation}
We will represent the constant due to the Sobolev's embedding by $c_S$ while $c$ will stand for a positive constant which may vary line to line during the proofs and is independent of the parameter $n$ used for the approximating problem.

We will need some auxiliary functions which are in the following defined:
\[
G_k(v)=(|v|-k)_+\text{sign}(v),\qquad T_k(v)=v-G_k(v)=\max\{-k,\min\{k,v\}\}.
\]
\begin{figure}[H]
\centering
\begin{tikzpicture}
\draw[->] (0,0) -- (3,0) node[anchor=north west] {v};
\draw[->] (0,0) -- (0,2) node[left] {{\color{red!70!black}$T_k(v)$}, {\color{orange}$G_k(v)$}};
\draw[<-] (-3,0) -- (0,0);
\draw[<-] (0,-2) -- (0,0);
\draw[very thick, red!70!black] (-1,-1) -- (1,1);
\draw[very thick, red!70!black] (1,1) -- (2,1);
\draw[very thick, red!70!black] (-1,-1) -- (-2,-1);
\draw[very thick, dashed, red!70!black] (2,1) -- (3,1);
\draw[very thick, dashed, red!70!black] (-2,-1) -- (-3,-1);
\draw[very thick, orange] (-1,0) -- (1,0);
\draw[very thick, orange] (1,0) -- (2,1);
\draw[very thick, orange] (-1,0) -- (-2,-1);
\draw[very thick, dashed, orange] (2,1) -- (3,2);
\draw[very thick, dashed, orange] (-2,-1) -- (-3,-2);
\draw[dashed] (1,1) -- (1,0);
\draw[dashed] (-1,-1) -- (-1,0);
\draw[dashed] (2,1) -- (2,0);
\draw[dashed] (-2,-1) -- (-2,0);
\draw[dashed] (1,1) -- (0,1);
\draw[dashed] (-1,-1) -- (0,-1);
\fill (1,0) circle (1.5pt) node[below] {$k$};
\fill (2,0) circle (1.5pt) node[below] {$k+1$};
\fill (-1,0) circle (1.5pt) node[above] {$-k$};
\fill (-2,0) circle (1.5pt) node[above] {$-(k+1)$};
\fill (0,1) circle (1.5pt) node[left] {$k$};
\fill (0,-1) circle (1.5pt) node[right] {$-k$};
\end{tikzpicture}
\caption{The functions $G_k(v)$ and $T_k(v)$}\label{fig:GkTk}
\end{figure}
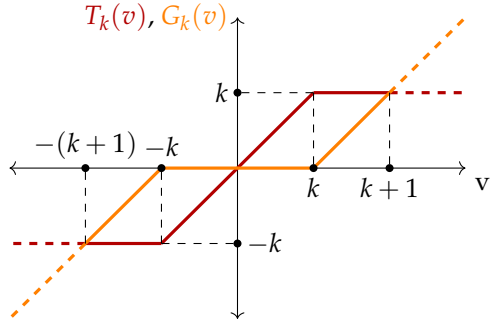
We denote the sets where $G_k(v(t))$ and $G_k(v)$ are different from zero by
\[
A_k^{t}:=\{x\in \Omega: \, |v(t,x)|>k \} \,\, \text{for fixed } t\in[0,T]\quad \text{and} \quad A_k:=\{(t,x)\in Q_T: \, |v(t,x)|>k\}.
\]

\section{Solutions of finite energy}\label{secplap}
\setcounter{equation}{0}

We begin this Section presenting the definition of {finite energy solution}.
\begin{definition}\label{sol}
A finite energy solution $u$ of \eqref{P} is a real valued function $u$ belonging to
\[
u\in L^p(0,T;W^{1,p}_0(\Omega)), \quad1<p<N,
\]
which satisfies the weak formulation:
\begin{equation*}
-\integrale u_0(x)\vp(0,x)\,dx+\iint_{Q_T}-u\vp_t+a(t,x,u,\N u)\cdot\N\vp\,dx\,dt=\iint_{Q_T}H(t,x,\N u)\vp\,dx\,dt
\end{equation*}
for every test function $\vp\in L^p(0,T;W_0^{1,p}(\Omega))\cap L^{\infty}(Q_T)$ such that $\vp_t\in L^{p'}(Q_T)$ and $\vp(T,x)=0$.
\end{definition}
We prove the existence of such a solution proceeding by approximation and thus we deal with a sequence of solutions of \eqref{Pn}, see Appendix \ref{secpbn}.

\subsection{The a priori estimate}
\begin{theorem}\label{sapp}
Assume $1<p<N$, \eqref{A1}, \eqref{A2}, \eqref{H} with $p-\frac{N}{N+2}\le q<p$ if $\frac{2N}{N+2}<p<N$ and $\frac{p}{2}<q<p$ if $1<p\le \frac{2N}{N+2}$, \eqref{F1}, \eqref{ID1} and let $\{u_n\}_n$ be a sequence of solutions of \eqref{Pn}. Then 
\begin{itemize}
\item $\{u_n\}_n$ is uniformly bounded in $L^\infty(0,T;L^{\sigma}(\Omega))\cap L^p(0,T;W^{1,p}_0(\Omega))$;
\item  $\{|u_n|^\beta\}_n$, $\beta=\frac{\sigma+p-2}{p}$, is uniformly bounded in $L^p(0,T;W^{1,p}_0(\Omega))$.
\end{itemize}
Moreover, the following inequality holds:
\begin{equation}\label{disapp}
\sup_{t\in [0,T]}\|u_n(t)\|_{L^{\sigma}(\Omega)}^{\sigma}+\|\nabla((1+|u_n|)^{\beta})\|_{L^p(Q_T)}^p\le M
\end{equation}
where the constant $M$ depends on $\alpha$, $p$, $q$, $\gamma$, $N$, $|\Omega|$, $T$, $u_0$, $f$ and remains bounded when $u_0$ and $f$ vary in sets which are bounded and equi-integrable, respectively, in $L^{\sigma}(\Omega)$ and $L^r(0,T;L^m(\Omega))$.
\end{theorem}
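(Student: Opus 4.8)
The plan is to make rigorous the formal computation of Section~\ref{subsecsharp}, working on the bounded approximate solutions $u_n$ of \eqref{Pn} (for which the test functions below are admissible) and keeping every constant uniform in $n$. First I would test \eqref{Pn} with $\vp(u_n)=[(1+|u_n|)^{\si-1}-1]\,\text{sign}(u_n)$, which is legitimate since $u_n\in L^\infty(Q_T)\cap L^p(0,T;W^{1,p}_0(\Omega))$. The parabolic term produces $\integrale \Phi(u_n(t))\,dx$ with $\Phi(u)=\int_0^{|u|}[(1+z)^{\si-1}-1]\,dz$, comparable from below to $(1+|u_n(t)|)^\si$, while the diffusion term, by coercivity \eqref{A1} and $\vp'(u_n)=(\si-1)(1+|u_n|)^{\si-2}$, controls $\iint_{Q_t}|\N u_n|^p(1+|u_n|)^{\si-2}\,dx\,ds$, i.e. exactly $\|\N v_n\|_{L^p(Q_t)}^p$ up to a constant, where $v_n:=(1+|u_n|)^\beta$ and $\beta=\frac{\si+p-2}{p}$. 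This gives
\begin{equation*}
\sup_{s\le t}\integrale(1+|u_n(s)|)^\si\,dx+\iint_{Q_t}|\N v_n|^p\,dx\,ds\le c\,\|u_0\|_{L^\si(\Omega)}^\si+c\,(\mathrm{I})+c\,(\mathrm{II}),
\end{equation*}
with $(\mathrm{I})=\ga\iint_{Q_t}|\N u_n|^q(1+|u_n|)^{\si-1}\,dx\,ds$ coming from \eqref{H} and $(\mathrm{II})=\iint_{Q_t}|f|(1+|u_n|)^{\si-1}\,dx\,ds$ from the forcing term.

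Next I would estimate the two contributions. For $(\mathrm{I})$, writing $|\N u_n|^q=\bigl(|\N u_n|^p(1+|u_n|)^{\si-2}\bigr)^{q/p}(1+|u_n|)^{(2-\si)q/p}$ and applying Young's inequality with exponents $\bigl(\frac{p}{q},\frac{p}{p-q}\bigr)$ and a small parameter $\vare$, the gradient factor is absorbed into $\|\N v_n\|_{L^p(Q_t)}^p$ on the left, while the remaining factor is a pure power of $v_n$ whose exponent, by the very choice \eqref{nua}--\eqref{bnu} of $\si$, equals the Gagliardo--Nirenberg exponent $p\frac{N+\tilde\nu}{N}$ with $\tilde\nu=\frac{p\si}{\si+p-2}$. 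For $(\mathrm{II})$ I would use Hölder in space and time as in \eqref{dis44}, so that $(\mathrm{II})\le c\|f\|_{L^r(0,T;L^m(\Omega))}\|v_n\|_{L^{\bar r}(0,T;L^{\bar m}(\Omega))}^{\frac{p(\si-1)}{\si+p-2}}$, the couple $(\bar m,\bar r)$ being admissible precisely because $(m,r)$ satisfies \eqref{F1}. Invoking Theorem~\ref{teoGN} in the form \eqref{dis2} bounds both mixed norms of $v_n$ in terms of $\|v_n\|_{L^\infty(0,T;L^{\tilde\nu})}$ and $\|\N v_n\|_{L^p}$, and feeding these back turns the energy inequality into a closed relation for the single quantity $Y_n(t):=\sup_{s\le t}\|u_n(s)\|_{L^\si(\Omega)}^\si+\|\N v_n\|_{L^p(Q_t)}^p$.

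The resulting relation has the form $Y_n(t)\le D(t)+C\,Y_n(t)^\theta$ with $\theta>1$, the superlinear exponent being exactly the homogeneity $q\frac{N+2}{p(N+1)-N}>1$ detected in Section~\ref{subsecsharp}, and $D(t)$ depending only on $\|u_0\|_{L^\si(\Omega)}$ and on $\|f\|_{L^r(0,T;L^m(\Omega))}$ restricted to $[0,t]$. This superlinearity is the main obstacle: one cannot absorb $Y_n(t)^\theta$ into $Y_n(t)$, so no bound follows from the inequality alone. To close it I would run a continuity/continuation argument. The map $t\mapsto Y_n(t)$ is nondecreasing and continuous, with $Y_n(0)\le\|u_0\|_{L^\si(\Omega)}^\si$; the scalar inequality $s\le D+Cs^\theta$ confines $s$ below the first positive root of $s=D+Cs^\theta$ as long as $CD^{\theta-1}$ stays below an explicit universal threshold. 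The equi-integrability of $f$ in $L^r(0,T;L^m(\Omega))$ guarantees, by absolute continuity of the integral, that $[0,T]$ splits into a finite number $k=k(\text{data})$ of subintervals on each of which the forcing contribution to $D$ is as small as required; restarting the estimate on each subinterval from the value attained by $Y_n$ at its left endpoint and chaining the $k$ bounds yields a control on $\sup_{[0,T]}Y_n$ that is independent of $n$ and that stays bounded when $u_0$ and $f$ range over bounded, equi-integrable sets. This is precisely the uniform estimate \eqref{disapp}, and the stated memberships in $L^\infty(0,T;L^\si(\Omega))\cap L^p(0,T;W^{1,p}_0(\Omega))$ together with $|u_n|^\beta\in L^p(0,T;W^{1,p}_0(\Omega))$ follow at once.
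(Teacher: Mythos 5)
Your first half faithfully reproduces the paper's computation: the test function $[(1+|u_n|)^{\si-1}-1]\,\mathrm{sign}(u_n)$, the splitting of the gradient term, and the observation that the leftover exponent is exactly the Gagliardo--Nirenberg one (the identity $\si-1+\tfrac{q}{p-q}=p\beta+\tfrac{p\si}{N}$) are all correct and match the paper. The genuine gap is in how you close the superlinear inequality $Y_n(t)\le D(t)+C\,Y_n(t)^{\theta}$, $\theta>1$. The scalar barrier argument you invoke is vacuous unless the \emph{whole} of $D$ is small: if $D$ exceeds $\max_{s>0}(s-Cs^{\theta})=(1-\theta^{-1})(C\theta)^{-1/(\theta-1)}$, the inequality $s\le D+Cs^{\theta}$ holds for every $s\ge 0$ and carries no information. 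But $D$ contains $c\|u_0\|_{L^{\si}(\Omega)}^{\si}$ on the first subinterval and $c\,Y_n(t_j)$ at each restart point, and neither is small: the theorem imposes no size condition on $u_0$, and $Y_n(t_j)$ is only \emph{bounded} by the previous barrier root, not small. Subdividing $[0,T]$ does not repair this, because the coefficient $C$ of $Y_n^{\theta}$ comes from Young's inequality and the Gagliardo--Nirenberg constant and is independent of the interval length; equivalently, $L^{\si}$ with $\si=\frac{N(q-(p-1))}{p-q}$ is precisely the scale-invariant space for this problem, so shrinking the time window gains nothing for the initial-datum contribution. Contrast this with the \emph{linear} case $q=\frac{p}{2}$ of Theorem \ref{sappsub}, where the dangerous term is $\tilde c_2 t_1\sup_t\|u_n(t)\|_{L^m(\Omega)}^m$ --- linear in the unknown, with a coefficient proportional to $t_1$ --- and there the partition-and-chain argument (conditions \eqref{condtsub}, \eqref{condt}) is exactly what the paper uses. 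Your strategy is the right tool for that regime, but applied here it would implicitly require a smallness assumption on the data that the statement does not make.

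The missing idea is that in the superlinear range the smallness must be produced by truncating the \emph{values} of the data, not the time interval. The paper sets $w_n=e^{-t}u_n$ (the zero-order term so generated helps with $f$) and runs the whole estimate on $G_k(w_n)$ rather than on $w_n$: testing with $|G_k(w_n)|^{\si-2}G_k(w_n)$, the initial datum of the resulting inequality is $\|G_k(u_0)\|_{L^{\si}(\Omega)}^{\si}$ and the forcing term enters only through $\||f|\chi_{\{|f|>k\}}\|_{L^r(0,T;L^m(\Omega))}$; equi-integrability of $u_0$ in $L^{\si}(\Omega)$ and of $f$ in $L^r(0,T;L^m(\Omega))$ makes both smaller than $\delta_0/2$ for all $k\ge k_0$ (see \eqref{u0}--\eqref{f0}). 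Moreover, in the paper's estimate the gradient integral on the right appears multiplied by $\|G_k(w_n)\|_{L^\infty(0,t;L^{\si}(\Omega))}^{\si\frac{p-q}{N}}$ (plus the analogous factor from the $f$ term), not by a fixed Young constant, so once this norm is confined below $\delta_0$ the term is absorbed linearly. The continuity argument with $T^{*}=\sup\{s:\ \|G_k(w_n(t))\|_{L^{\si}(\Omega)}^{\si}\le\delta_0\ \forall t\le s\}$ then runs on all of $[0,T]$ at once, with no partition and no restart. The bounded remainder $T_{k_0}(w_n)$ is estimated separately, with constants allowed to depend on $k_0$, and a final test with $w_n$ itself yields the energy bound on $u_n$ --- a step that is needed in any case, since the bounds of the first part concern $|G_k(w_n)|^{\beta}$ and $|T_{k_0}(w_n)|^{\beta}$, whose gradients degenerate on the set where $u_n$ vanishes when $\beta>1$.
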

\proof
{\textit{Part $1$.}}\\
Let us consider the change of variable $\displaystyle w_n=e^{-t}u_n$ so that the problem \eqref{Pn} becomes
\begin{equation}\label{wn}
(w_n)_t+w_n-\dive \tilde{ a}(t,x,w_n,\nabla w_n)=\tilde {H}_n(t,x,\nabla w_n)
\end{equation}
where $\tilde{a}(t,x,u,\xi)=e^{-t}{a}(t,x,e^tu,e^t\xi)$ and $\tilde{H}_n(t,x,\xi)=e^{-t}H_n(t,x,e^t\xi)$. Note that \eqref{A1}-\eqref{A2} and \eqref{H} still hold with different constants (all depending on $T<\infty$), say $\tilde\al$, $\tilde \lambda$ and $\tilde{\gamma}$ respectively. We underline that $w_n(0,x)=u_{0,n}(x)$ for every $n\in \mathbb{N}$ and homogeneous Dirichlet boundary conditions are still satisfied in the same space (i.e. $w_n=0$ over $(0,T)\times\partial \Omega$). Moreover, we observe that $w_n$ and $u_n$ have the same behaviour for finite time: indeed, $u_n\le e^T w_n$ pointwise. This fact allows us to say that the bounds satisfied by $\{w_n\}_n$ hold (for finite time) for $\{u_n\}_n$ as well.
We point out that the change of variable makes a zero order term appear and this term helps us dealing with $f$.

We multiply the equation in \eqref{wn} by $|G_k(w_n)|^{\sigma-2}G_k(w_n)$ and integrate over $Q_t$. Thus, thanks to the assumptions \eqref{A1} and \eqref{H}, we have:
\begin{equation*}
\begin{array}{c}
\ds
\frac{1}{\sigma}\int_{\Omega}|G_k(w_n(t))|^{\sigma}\,dx+
k\iint_{Q_t} |G_k(w_n)|^{\sigma-1}\,dx\,ds
+\tilde\alpha(\sigma-1)
\iint_{Q_t} |\nabla G_k(w_n) |^p |G_k(w_n)|^{\sigma-2}\,dx\,ds\\
[3mm]\ds
\le\tilde\gamma  \iint_{Q_t} |\nabla G_k(w_n)|^q |G_k(w_n)|^{\sigma-1}\,dx\,ds
+ \iint_{Q_t}|f|\chi_{\{|f|>k\}}|G_k(w_n)|^{\sigma-1}\,dx\,ds\\
[3mm]\ds
+\iint_{Q_t}|f|\chi_{\{|f|\le k\}}|G_k(w_n)|^{\sigma-1}\,dx\,ds+\frac{1}{\sigma}\int_{\Omega}|G_k(u_0)|^{\sigma}\,dx.
\end{array}
\end{equation*}
The change of variable allows us to simplify as below:
\begin{equation*}
\begin{array}{c}
\ds
\frac{1}{\sigma}\int_{\Omega}|G_k(w_n(t))|^{\sigma}\,dx
+\tilde\alpha\frac{\sigma-1}{\beta^p}
\iint_{Q_t} |\nabla [|G_k(w_n)|^\beta] |^p\,dx\,ds
\\[3mm]
\ds
\le\tilde\gamma \iint_{Q_t} |\nabla G_k(w_n)|^q |G_k(w_n)|^{\sigma-1}\,dx\,ds
+ \iint_{Q_t}|f|\chi_{\{|f|>k\}}|G_k(w_n)|^{\sigma-1}\,dx\,ds+\frac{1}{\sigma}\int_{\Omega}|G_k(u_0)|^{\sigma}\,dx.
\end{array}
\end{equation*}
Estimating the first integral in the r.h.s. using H\"older's inequality with indices $(\frac{p}{q}, \frac{p}{p-q})$ we get
\begin{equation}\label{3Hpp}
\begin{array}{c}
\ds
\iint_{Q_t} |\nabla G_k(w_n)|^q |G_k(w_n)|^{\sigma-1}\,dx\,ds \\
[3mm]
\ds
\le  \tilde\gamma\iint_{Q_t} |\nabla G_k(w_n)|^q |G_k(w_n)|^{q(\beta-1)} |G_k(w_n)|^{(\sigma-1)\frac{p-q}{p}+\frac{q}{p}}\,dx\,ds\\
[3mm]
\ds
\le \frac{1}{\beta^q} \int_0^t \biggl[\biggl(\int_{\Omega} |\nabla [|G_k(w_n)|^{\beta}]|^p \,dx\biggr)^{\frac{q}{p}}
\cdot 
\biggl(\int_{\Omega}  |G_k(w_n)|^{\sigma-1+\frac{q}{p-q}}\,dx\biggr)^{\frac{p-q}{p}}\biggr]\,ds.
\end{array}
\end{equation}
Moreover, being $\sigma-1+\frac{q}{p-q}=p\beta+\frac{p\sigma}{N}$ by definitions of $\beta$ and $\sigma$, we can apply again H\"older's inequality with $\left(\frac{p^*}{p},\frac{N}{p}\right)$ and Sobolev's embedding too, so that we obtain
\begin{equation*}
\begin{array}{c}
\ds
\tilde\gamma  \iint_{Q_t} |\nabla G_k(w_n)|^q |G_k(w_n)|^{\sigma-1}\,dx\,ds\notag\\
[3mm]
\ds
\le \frac{\tilde\gamma}{\beta^q}\int_0^t\biggl(\int_{\Omega} |\nabla [|G_k(w_n)|^{\beta}]|^p \,dx\biggr)^{\frac{q}{p}}\biggl(\int_{\Omega}  |G_k(w_n)|^{p^*\beta}\,dx\biggr)^{\frac{p-q}{p^*}}
 \biggl(\int_{\Omega}  |G_k(w_n)|^{\sigma}\,dx\biggr)^{\frac{p-q}{N}}\,ds\\
 [3mm]
 \ds
 \le\frac{c_S\tilde\gamma}{\beta^q}\int_0^t\biggl(\int_{\Omega} |\nabla [|G_k(w_n)|^{\beta}]|^p \,dx\biggr)\biggl(\int_{\Omega}  |G_k(w_n)|^{\sigma}\,dx\biggr)^{\frac{p-q}{N}}\,ds.
\end{array}
\end{equation*}

As far as the integral involving $f$ is concerned, twice applications of H\"older's inequalities with indices $(m,m')$ and $(r,r')$ give us
\begin{align*}
\iint_{Q_t}|f|\chi_{\{|f|>k\}}|G_k(w_n)|^{\sigma-1}\,dx\,ds&\le \||f|\chi_{\{|f|>k\}}\|_{L^r(0,T;L^m(\Omega))}
\|G_k(w_n)\|_{L^{r'(\si-1)}(0,t;L^{m'(\si-1)}(\Omega))}^{\sigma-1}\\
&=\||f|\chi_{\{|f|>k\}}\|_{L^r(0,T;L^m(\Omega))}
\||G_k(w_n)|^{\beta}\|_{L^{r'\frac{\si-1}{\beta}}(0,t;L^{m'\frac{\si-1}{\beta}}(\Omega))}^{\frac{\si-1}{\beta}}.
\end{align*}
We go further invoking Theorem \ref{teoGN} with $v=|G_k(w_n)|^{\beta}$, $h=\frac{\si}{\beta}$ and $\eta=p$. We notice again that $h<p^*$ since $q>\frac{p}{2}$. Then, we have that
\[
|G_k(w_n)|^{\beta}\in L^y(0,T;L^w(\Omega))\quad\forall n\in \mathbb{N}
\]
where the couple $(w,y)$ satisfies the relation in \eqref{rel}.
In particular, the inequality in \eqref{GN} becomes
\[
\int_0^t \||G_k(w_n(s))|^{\beta}\|_{L^{w}(\Omega)}^y\,ds\le c(N,p,w)\||G_k(w_n)|^{\beta}\|_{L^{\infty}(0,t;L^{\frac{\si}{\beta}}(\Omega))}^{y-p}\int_0^t \|\N[|G_k(w_n(s))|^{\beta}]\|_{L^p(\Omega)}^p\,ds.
\]
Algebraic computations show that the hypotheses \eqref{F1} ensures that
\[
w\ge m'\frac{\si-1}{\beta}\quad\text{and}\quad y\ge r'\frac{\si-1}{\beta}
\]
and thus we can proceed estimating as below:
\begin{equation*}
\begin{array}{c}
\ds
\||f|\chi_{\{|f|>k\}}\|_{L^r(0,T;L^m(\Omega))}
\||G_k(w_n)|^{\beta}\|_{L^{y}(0,T;L^{w}(\Omega))}^{\frac{\si-1}{\beta}}\\
[3mm]\ds
\le c\||f|\chi_{\{|f|>k\}}\|_{L^r(0,T;L^m(\Omega))}\left(
\|G_k(w_n)\|_{L^{\infty}(0,t;L^{\si}(\Omega))}^{\beta(y-p)}\int_0^t \|\N[|G_k(w_n(s))|^{\beta}]\|_{L^p(\Omega)}^p\,ds
\right)^{\frac{\si-1}{y\beta}}\\
[3mm]\ds
\le c_1\|G_k(w_n)\|_{L^{\infty}(0,t;L^{\si}(\Omega))}^{\beta(y-p)}\int_0^t \|\N[|G_k(w_n(s))|^{\beta}]\|_{L^p(\Omega)}^p\,ds+c_2\||f|\chi_{\{|f|>k\}}\|_{L^r(0,T;L^m(\Omega))}^{\frac{y\beta}{y\beta-(\si-1)}}
\end{array}
\end{equation*}
where the last inequality is due to Young's one with indices $\left( \frac{y\beta}{\si-1},\frac{y\beta}{y\beta-(\si-1)}\right)$. 

We collect our previous estimates in the following inequality:
\begin{equation}\label{disfin}
\begin{array}{c}
\ds
\frac{1}{\sigma}\int_{\Omega}|G_k(w_n(t))|^{\sigma}\,dx+
\tilde\alpha\frac{\sigma-1}{\beta^p}
\iint_{Q_t} |\nabla [|G_k(w_n(s))|^\beta] |^p \,dx\,ds\\
[3mm]\ds
\le 
 \left[c_1\|G_k(w_n)\|_{L^{\infty}(0,t;L^{\si}(\Omega))}^{\beta(y-p)}+\frac{\tilde\gamma  c_S}{\beta^q}\|G_k(w_n)\|_{L^{\infty}(0,t;L^{\si}(\Omega))}^{\si\frac{p-q}{N}}\right]\int_0^t
\|\N [|G_k(w_n(s))|^{\beta}]\|_{L^{p}(\Omega)}^{p}\,ds
\\
[3mm]\ds
+c_2\||f|\chi_{\{|f|>k\}}\|_{L^r(0,T;L^m(\Omega))}^{\frac{y\beta}{y\beta-(\si-1)}}+\frac{1}{\sigma}\int_{\Omega}|G_k(u_0)|^{\sigma}\,dx.
\end{array}
\end{equation}
\noindent
The next steps are aimed at absorbing the gradient term in the r.h.s. to the l.h.s..\\
We fix a value $\delta_0$ such that $\ds 2\max\left\{
\frac{\tilde\gamma c_S}{\beta^q} {\delta_0}^{\frac{p-q}{N}},
c_1\delta_0^{\frac{\beta(y-p)}{\si}}\right\}
=\tilde\alpha\frac{\sigma-1}{2\beta^p}$ and a value $ k_0$ large enough so that
\begin{equation}\label{u0}
\|G_k(u_0)\|_{L^{\sigma}(\Omega)}^{\sigma}<\frac{\delta_0}{2}\qquad\forall k\ge k_0
\end{equation} 
and
\begin{equation}\label{f0}
\sigma c_2\||f|\chi_{\{|f|>k\}}\|_{L^r(0,T;L^m(\Omega))}^{\frac{y\beta}{y\beta-(\si-1)}}< \frac{\delta_0}{2}\qquad\forall k\ge k_0.
\end{equation}
Moreover, for $k\ge k_0$, we set
\begin{equation*}
T^*:=\sup\{s\in [0,T]: \,\|G_k(w_n(t))\|_{L^{\sigma}(\Omega)}^{\sigma}\le \delta_0 \,\,\forall   t\le s  \}.
\end{equation*}
Since, thanks to \cite[Theorem $1.1$]{G}, $\{w_n\}\subseteq C([0,T];L^{\nu}(\Omega))$ for every $1\le\nu<\infty$, we have that $T^*>0$ due to \eqref{u0}.

If we suppose that $t\le T^*$ in \eqref{disfin}, then the definition of $\delta_0$ implies that
\begin{equation}\label{datotp}
\begin{array}{c}
\ds
\frac{1}{\sigma}\int_{\Omega}|G_k(w_n(t))|^{\sigma}\,dx+\tilde\alpha \frac{\sigma-1}{2\beta^p}
\int_0^t\int_{\Omega}  |\nabla [|G_k(w_n)|^{\beta}] |^p  \,dx\,ds\\
[3mm]\ds\le\frac{1}{\sigma}\int_{\Omega}|G_k(u_0)|^{\sigma}\,dx
+c_2\||f|\chi_{\{|f|>k\}}\|_{L^r(0,T;L^m(\Omega))}^{\frac{y\beta}{y\beta-(\si-1)}}
\end{array}
\end{equation}
for every $k\ge k_0$. We can extend the inequality \eqref{datotp} to the whole interval $[0,T]$ observing that, if $t=T^*<T$, then \eqref{u0} and \eqref{f0} lead to
\[
\int_{\Omega}|G_k(w_n(T^*))|^{\sigma}\,dx<\delta_0
\]
which is in contrast with the definition of $T^*$ because of the continuity regularity $C([0,T];L^{\sigma}(\Omega))$. Therefore, we have that $T^*=T$ and \eqref{datotp} holds for all $t\le T$, that is
\begin{equation}\label{thGkpp}
\sup_{t\in (0,T)}\|G_k(u_n(t))\|_{L^{\sigma}(\Omega)}^{\sigma}+\|\nabla[|G_k(u_n)|^{\beta}]\|_{L^p(Q_{T})}^p\le M_1 \quad\forall k\ge k_0.
\end{equation} 

The proof of the Theorem will be concluded once we show that $|\N [|T_{k_0}(w_n)|^\beta]|$ satisfies a bound like the one proved in \eqref{thGkpp}.
With this purpose, we multiply the equation in \eqref{wn} for $|T_{k_0}(w_n)|^{\sigma-2}T_{k_0}(w_n)$ and integrate over $Q_t$, so that we have:
\begin{equation*}
\begin{array}{c}
\ds
\int_{\Omega}\Theta_{k_0}(w_n(t))\,dx+\iint_{Q_t}|w_n||T_{k_0}(w_n)|^{\sigma-1}\,dx\,ds
+\tilde\alpha\frac{ (\sigma-1)}{\beta^p}\iint_{Q_t}|\nabla ( |T_{k_0}(w_n)|^{\beta})|^p\,dx\,ds\\
[3mm]\ds
\le \tilde\gamma \iint_{Q_t}|\nabla w_n|^q|T_{k_0}(w_n)|^{\sigma-1}\,dx\,ds
+\iint_{Q_t}|f||T_{k_0}(w_n)|^{\sigma-1}\,dx\,ds+\int_{\Omega}\Theta_{k_0}(u_0)\,dx
\end{array}
\end{equation*}
where 
\[
\Theta_{k_0}(s)=\int_0^s |T_{k_0}(z)|^{\sigma-2}T_{\bar{k}}(z)\,dz.
\] 
The last integral in the r.h.s. is uniformly bounded in $n$ thanks to the assumption on the initial datum $u_0$. 
As far as the first integral is concerned, we use the decomposition $w_n=G_{k_0}(w_n)+T_{k_0}(w_n)$ and estimate as below
 \begin{align*}
 \tilde\gamma \iint_{Q_t}|\nabla w_n|^q|T_{k_0}(w_n)|^{\sigma-1}\,dx\,ds&
\le \frac{\tilde\gamma}{\beta^q}\iint_{Q_t}|\nabla [|w_n|^\beta]|^q|T_{k_0}(w_n)|^{(\sigma-1)\frac{p-q}{p}+\frac{q}{p}}\,dx\,ds\\
&\le  c\biggl[\iint_{Q_t}|\nabla [|G_{k_0}(w_n)|^\beta]|^q\,dx\,ds+\iint_{Q_t}|\nabla [|T_{k_0}(w_n)|^\beta]|^q\,dx\,ds\biggr]
 \end{align*}
where $c=c(\si,p,q,k_0)$. Twice applications of Young's inequality with $\left(\frac{p}{q},\frac{p}{p-q}\right)$ and the bound obtained in \eqref{thGkpp} give us
\begin{equation*}
\iint_{Q_t}|\nabla [|G_{k_0}(w_n)|^\beta]|^q\,dx\,ds
\le c\iint_{Q_T}|\nabla [|G_{k_0}(w_n)|^\beta]|^p\,dx\,ds+c
\le c_0[M_1+1]
\end{equation*}
and
\begin{equation*}
\iint_{Q_t}|\nabla [|T_{k_0}(w_n)|^\beta]|^q\,dx\,ds\le \tilde{\alpha}\frac{\sigma-1}{2\beta^p}\iint_{Q_t}|\nabla [|T_{k_0}(w_n)|^\beta]|^p\,dx\,ds+\tilde{c}_0
\end{equation*} 
where both $c_0$ and $\tilde{c}_0$ depend on $|\Omega|$, $T$ and $k_0$.\\
Finally, since
\[
\iint_{Q_t}|f||T_{k_0}(w_n)|^{\sigma-1}\,dx\,ds\le \bar{c}_0\|f\|_{L^r(0,T;L^m(\Omega))}
\]
with $\bar{c}_0=\bar{c}_0(T,|\Omega|,k_0)$, we collect all the previous estimates in the following inequality:
\begin{equation*}
\begin{array}{c}
\ds
\int_{\Omega}\Theta_{k_0}(w_n(t))\,dx+\tilde\alpha\frac{ \sigma-1}{2\beta^p}\iint_{Q_t}|\nabla ( |T_{k_0}(w_n)|^{\beta})|^p\,dx\,ds\\
[3mm]\ds
\le c\left[M_1+1 +\|f\|_{L^r(0,T;L^m(\Omega))}\right]
 +\int_{\Omega}|u_0|^{\sigma}\,dx.
\end{array}
\end{equation*}
In the end, we have found that:
\[
\|\nabla[|T_{k_0}(w_n)|^{\beta}]\|_{L^p(Q_T)}^p\le M_2
\]
where $M_2=M_2(\delta_0,k_0,T,|\Omega|,f,u_0)$ besides the parameters given by the problem.\\
Then, the inequality \eqref{disapp} follows with $M$ depending on $\alpha$, $p$, $q$, $\gamma$, $N$, $|\Omega|$, $T$ and $k_0$. In particular, since $k_0=k_0(\delta_0)$, $M$ remains bounded when $u_0$ and $f$ vary in sets which are bounded and equi-integrable, respectively, in $L^{\sigma}(\Omega)$ and $L^r(0,T;L^m(\Omega))$.\\

\noindent
{\textit{Part $2$.}}\\
We observe that the boundedness of $\{|u_n|^{\beta}\}_n$ in $L^p(0,T;W^{1,p}_0(\Omega))$ does not provide that $\{u_n\}_n$ is uniformly bounded in $L^p(0,T;W^{1,p}_0(\Omega))$ as well. However, choosing $w_n$ as test function and thanks to Young's inequality with $\left(\frac{p}{q},\frac{p}{p-q}\right)$, we get
\begin{equation*}
\begin{array}{c}
\ds
\frac{1}{2}\int_{\Omega}|w_n(t)|^2\,dx+\iint_{Q_t}|w_n|^2\,dx\,dt+\tilde{\al}\iint_{Q_t}|\N w_n|^p\,dx\,dt\\
[3mm]\ds
\le \tilde{\gamma}\iint_{Q_t}|\N w_n|^q|w_n|\,dx\,dt+\iint_{Q_t}|f||w_n|\,dx\,dt+\frac{1}{2}\int_{\Omega}|u_0|\,dx\\
[3mm]\ds
\le \frac{\tilde{\al}}{2}\iint_{Q_t}|\N w_n|^p\,dx\,dt+c\iint_{Q_t}|w_n|^{\frac{p}{p-q}}\,dx\,dt+\iint_{Q_t}|f||w_n|\,dx\,dt+\frac{1}{2}\int_{\Omega}|u_0|^2\,dx
\end{array}
\end{equation*}
from which
\[
\begin{array}{c}
\ds
\frac{1}{2}\int_{\Omega}|w_n(t)|^2\,dx+\iint_{Q_t}|w_n|^2\,dx\,dt+\frac{\tilde{\al}}{2}\iint_{Q_t}|\N w_n|^p\,dx\,dt\\
[3mm]\ds
\le c\iint_{Q_t}|w_n|^{\frac{p}{p-q}}\,dx\,dt+\iint_{Q_t}|f||w_n|\,dx\,dt+\frac{1}{2}\int_{\Omega}|u_0|^2\,dx.
\end{array}
\]
Then, since $\{w_n\}_n$ is bounded in $L^{\frac{p}{p-q}}(Q_T)$ (indeed, $\frac{p}{\beta(p-q)}\le p\frac{N+\frac{\si}{\beta}}{N}$ being $\si\ge 2$) and in $L^{r'}(0,T;L^{m'}(\Omega))$ (since $\frac{m'}{\beta}\le \frac{m'}{\beta}(\si-1)$ and similar for $r'$), the assertion follows.
\endproof
\begin{corollary}\label{simon}
Assume \eqref{A1}, \eqref{A2}, \eqref{H} with $p-\frac{N}{N+2}\le q<p$ if $\frac{2N}{N+2}<p<N$ and $\frac{p}{2}<q<p$ if $1<p\le \frac{2N}{N+2}$, \eqref{F1} and \eqref{ID1}. Moreover, let $\{u_n\}_n$ be a sequence of solutions of \eqref{Pn}. Then, up to subsequences, $u_n$ converges strongly to some function $u$ in $L^p(Q_T)$.
\end{corollary}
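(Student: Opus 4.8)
The plan is to derive the strong $L^p(Q_T)$ convergence from a compactness theorem of Aubin--Lions--Simon type, using the a priori bounds already secured in Theorem \ref{sapp}. That theorem gives that $\{u_n\}_n$ is bounded in $L^p(0,T;W^{1,p}_0(\Omega))$, and since $1<p<N$ the embedding $W^{1,p}_0(\Omega)\hookrightarrow\hookrightarrow L^p(\Omega)$ is compact by Rellich--Kondrachov. Hence it suffices to control the time derivative $\{(u_n)_t\}_n$ in a suitable space of negative order, so that Simon's theorem yields relative compactness of $\{u_n\}_n$ in $L^p(0,T;L^p(\Omega))=L^p(Q_T)$.

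To estimate the time derivative I would read it off from the equation in \eqref{Pn}, namely $(u_n)_t=\dive a(t,x,u_n,\N u_n)+H_n(t,x,\N u_n)$, and bound the two contributions separately. For the principal part, the growth condition \eqref{A2} gives $|a(t,x,u_n,\N u_n)|^{p'}\le c\,(|u_n|^p+|\N u_n|^p+h^{p'})$ since $(p-1)p'=p$; as $\{u_n\}_n$ and $\{\N u_n\}_n$ are bounded in $L^p(Q_T)$ and $h\in L^{p'}(Q_T)$, the field $a(t,x,u_n,\N u_n)$ is bounded in $L^{p'}(Q_T)$, whence $\dive a(t,x,u_n,\N u_n)$ is bounded in $L^{p'}(0,T;W^{-1,p'}(\Omega))$. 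For the reaction term, \eqref{H} gives $|H_n(t,x,\N u_n)|\le \gamma|\N u_n|^q+f$; since $q<p$, the gradient bound shows $\{|\N u_n|^q\}_n$ is bounded in $L^{p/q}(Q_T)\subset L^1(Q_T)$, while $f\in L^r(0,T;L^m(\Omega))\subset L^1(Q_T)$, so that $\{H_n(t,x,\N u_n)\}_n$ is bounded in $L^1(Q_T)$.

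Collecting the two estimates, $\{(u_n)_t\}_n$ is bounded in $L^{p'}(0,T;W^{-1,p'}(\Omega))+L^1(Q_T)$. Using that $L^1(\Omega)\hookrightarrow W^{-1,s'}(\Omega)$ whenever $s>N$ (because then $W^{1,s}_0(\Omega)\hookrightarrow C(\overline{\Omega})$), and that $s'<N'<p'$ forces $W^{-1,s'}(\Omega)\subset W^{-1,p'}(\Omega)$, both summands in fact lie in $L^1(0,T;W^{-1,p'}(\Omega))$. With a compactness chain $W^{1,p}_0(\Omega)\hookrightarrow\hookrightarrow B\hookrightarrow Y$, where $Y$ is the weak negative space holding the derivative, Simon's theorem then applies to $\{u_n\}_n$ and gives relative compactness; a diagonal subsequence converges strongly to some $u$ and, by uniqueness of limits, $u_n\to u$ in $L^p(Q_T)$.

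The delicate point is precisely the low integrability of the right-hand side: because $|\N u_n|^q$ and $f$ are controlled only in $L^1$, the derivative does not sit in the dual energy space, and one must verify the continuous embedding $B\hookrightarrow Y$ for the weak $Y$ in which it does sit. When $p\ge \tfrac{2N}{N+1}$ this is painless with $B=L^p(\Omega)$ and $Y=W^{-1,p'}(\Omega)$, since then $W^{1,p}_0(\Omega)\hookrightarrow L^{p'}(\Omega)$ yields $L^p(\Omega)\hookrightarrow W^{-1,p'}(\Omega)$ and the conclusion is directly in $L^p(Q_T)$. For the remaining small values of $p$ I would first run Simon's theorem with a weaker Lebesgue intermediate space $B=L^\theta(\Omega)$, obtaining strong convergence in $L^\theta(Q_T)$ (hence an a.e.\ convergent subsequence), and then upgrade to $L^p(Q_T)$ by Vitali's theorem, invoking the equi-integrability of $\{|u_n|^p\}_n$ that follows from the higher summability encoded in the $L^\infty(0,T;L^\sigma(\Omega))$ bound and in the boundedness of $\{|u_n|^\beta\}_n$ in $L^p(0,T;W^{1,p}_0(\Omega))$ from Theorem \ref{sapp}. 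This interplay between the negative-order estimate on $(u_n)_t$ and the extra integrability of $u_n$ is exactly where the hypotheses $1<p<N$ and the a priori bounds are used, and it is the main technical obstacle of the argument.
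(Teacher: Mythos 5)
Your overall strategy---Aubin--Lions--Simon compactness, with Theorem \ref{sapp} supplying the space bound and the equation supplying the bound on $(u_n)_t$---is exactly the paper's, and your estimates for the two pieces of the right-hand side are correct: $\dive a(t,x,u_n,\N u_n)$ is bounded in $L^{p'}(0,T;W^{-1,p'}(\Omega))$ by \eqref{A2}, and $H_n(t,x,\N u_n)$ is bounded in $L^1(Q_T)$ since $q<p$. The execution, however, contains a genuine error in the functional-analytic bookkeeping. Duality \emph{reverses} inclusions: since $s>N>p$, one has $W^{1,s}_0(\Omega)\hookrightarrow W^{1,p}_0(\Omega)$ and hence $W^{-1,p'}(\Omega)\hookrightarrow W^{-1,s'}(\Omega)$, not the inclusion $W^{-1,s'}(\Omega)\subset W^{-1,p'}(\Omega)$ that you assert. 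Consequently your conclusion that ``both summands in fact lie in $L^1(0,T;W^{-1,p'}(\Omega))$'' is false: the embedding $L^1(\Omega)\hookrightarrow W^{-1,p'}(\Omega)$ would require $W^{1,p}_0(\Omega)\hookrightarrow L^{\infty}(\Omega)$, which fails for every $p\le N$, i.e., throughout the regime of the corollary. The same problem undoes your case $p\ge\frac{2N}{N+1}$: there the embedding $B=L^p(\Omega)\hookrightarrow W^{-1,p'}(\Omega)$ does hold, but Simon's theorem also needs $(u_n)_t$ bounded in $L^1(0,T;Y)$ with $Y=W^{-1,p'}(\Omega)$, and the $L^1(Q_T)$-part of the right-hand side is not in that space.

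The fix is precisely the paper's choice, and it removes all your case distinctions at once: take $Y=W^{-1,s'}(\Omega)$ with $s>N$ for the \emph{whole} range $1<p<N$. Since $W^{1,s}_0(\Omega)\hookrightarrow C(\overline{\Omega})$, one has $L^1(\Omega)\hookrightarrow Y$, hence both $L^p(\Omega)\hookrightarrow L^1(\Omega)\hookrightarrow Y$ (which is the chain $X\hookrightarrow\hookrightarrow B\hookrightarrow Y$ required by \cite[Corollary $4$]{S}, with $X=W^{1,p}_0(\Omega)$ and $B=L^p(\Omega)$) and $(u_n)_t$ bounded in $L^1(0,T;Y)$, because $W^{-1,p'}(\Omega)\hookrightarrow W^{-1,s'}(\Omega)$ and $L^1(\Omega)\hookrightarrow W^{-1,s'}(\Omega)$. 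With this single choice of $Y$ the embedding $B\hookrightarrow Y$ costs nothing even for small $p$---the obstruction you perceive there is an artifact of insisting on $Y=W^{-1,p'}(\Omega)$---so the auxiliary space $B=L^{\theta}(\Omega)$ and the subsequent Vitali upgrade (which could be made rigorous, via the $L^{p+\varepsilon}(Q_T)$ bound that Gagliardo--Nirenberg extracts from Theorem \ref{sapp}, but would need to be spelled out) are unnecessary.
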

\proof
Standard compactness results (see \cite[Corollary $4$]{S}) guarantee that, up to subsequences, $u_n$ converges strongly to $u$ in $L^p(Q_T)$. We here recall the hypotheses of \cite[Corollary $4$]{S}: 
\begin{center}
Let $X$, $B$ and $Y$ be Banach spaces such that
\[
X\hookrightarrow B \hookrightarrow Y
\]
where the embedding $X\hookrightarrow B$ is compact. Then, if $\{ u_n\}_n\subseteq L^p(0,T;X)$, $1\le p <\infty$, and $\{( u_n)_t\}_n\subseteq L^1(0,T;Y)$, we have that $\{ u_n\}_n$ is relatively compact in $L^p(0,T;B)$.
\end{center}
We thus apply the result above for $p>1$, $X=W^{1,p}_0(\Omega)$, $B=L^p(\Omega)$ and $Y=W^{-1,s'}(\Omega)$ and $s$ greater than $N$.
\endproof

\subsection{The a.e. convergence of the gradient}
We prove here that the a.e. convergence $\N u_n\to \N u $ holds. This last step is essential in order to prove the desired existence result: indeed, even if we would deal with linear operator in the l.h.s., the nature of the r.h.s. requires this step.

\begin{proposition}\label{aeD}
Assume $1<p<N$, \eqref{A1}, \eqref{A2}, \eqref{A3}, \eqref{H} with $p-\frac{N}{N+2}\le q<p$ if $\frac{2N}{N+2}<p<N$ and $\frac{p}{2}<q<p$ if $1<p\le \frac{2N}{N+2}$, \eqref{F1} and \eqref{ID1}. Then there exists a subsequence (still denoted by $u_n$) and a function $u$ such that
\[
\nabla u_n \to \nabla u\quad\text{a.e. }\,\,Q_T.
\]  
Moreover $H_n(t,x,\N u_n)$ converges strongly to $H(t,x,\N u)$ in $L^1(Q_T)$.
\end{proposition}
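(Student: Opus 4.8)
The plan is to adapt the classical Boccardo--Murat--Puel argument for equations with a gradient-dependent right-hand side to the parabolic framework, compensating the lack of time regularity of the limit $u$ by a time-regularization of the truncations. It is convenient to keep working with the transformed sequence $w_n=e^{-t}u_n$ solving \eqref{wn}, for which Theorem \ref{sapp} and Corollary \ref{simon} provide the uniform bounds and the strong $L^p(Q_T)$ convergence; since $u_n=e^{t}w_n$, any pointwise statement on $\N w_n$ immediately transfers to $\N u_n$.

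First I would collect the weak compactness consequences of the a priori estimate. As $\{u_n\}_n$ is bounded in $L^p(0,T;W^{1,p}_0(\Omega))$, up to a subsequence $\N u_n\rightharpoonup \N u$ in $L^p(Q_T)$, while by \eqref{A2} the fields $a(t,x,u_n,\N u_n)$ are bounded in $L^{p'}(Q_T)$; moreover $u_n\to u$ a.e.\ by Corollary \ref{simon}. The problem then reduces, for every fixed $k>0$, to showing that
\begin{equation*}
\iint_{Q_T}\big[a(t,x,u_n,\N T_k(u_n))-a(t,x,u_n,\N T_k(u))\big]\cdot\big[\N T_k(u_n)-\N T_k(u)\big]\,dx\,dt\longrightarrow 0 .
\end{equation*}
Indeed the integrand is nonnegative by the strict monotonicity \eqref{A3}, so $L^1$-convergence to $0$ forces it to vanish a.e.; the standard Leray--Lions lemma then yields $\N T_k(u_n)\to \N T_k(u)$ a.e.\ on $Q_T$, and letting $k\to\infty$ (recall $|u|<\infty$ a.e.\ since $u(t)\in L^\si(\Omega)$) gives $\N u_n\to\N u$ a.e.\ in $Q_T$.

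To generate this quantity I would test \eqref{wn} with $\vp=\phi_\lambda\big(T_k(w_n)-(T_k(w))_\mu\big)$, where $(T_k(w))_\mu$ is a Landes-type time-regularization of $T_k(w)$ (so that $|(T_k(w))_\mu|\le k$, $(T_k(w))_\mu\to T_k(w)$ in $L^p(0,T;W^{1,p}_0(\Omega))$ and a.e.\ as $\mu\to\infty$, and its time derivative is under control) and $\phi_\lambda(s)=s\,e^{\lambda s^2}$. The parameter $\lambda$ is chosen so large that the term produced by the coercivity \eqref{A1} through $\phi_\lambda'$ dominates the gradient contribution $\ga|\N w_n|^q|\phi_\lambda(\cdot)|$ coming from \eqref{H}; since $q<p$ this absorption is even easier than in the natural-growth case $q=p$, and after a Young inequality it can be arranged with any prescribed smallness. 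The terms involving $f$ and the zero-order term $w_n$ are harmless, for they are paired with $\phi_\lambda(T_k(w_n)-(T_k(w))_\mu)$, a uniformly bounded quantity that tends to $0$ a.e.\ in the iterated limit $n\to\infty$, $\mu\to\infty$; dominated convergence makes their contribution vanish.

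The main obstacle is the parabolic term $\iint_{Q_T}\partial_t w_n\,\phi_\lambda\big(T_k(w_n)-(T_k(w))_\mu\big)$: the natural candidate $T_k(w)$ is not an admissible test function, as it has no $L^{p'}(Q_T)$ time derivative, which is precisely the reason for introducing $(T_k(w))_\mu$. Here I would invoke the by-now classical time-regularization argument (Landes; Blanchard--Murat; Dall'Aglio; Porretta) to show that
\begin{equation*}
\liminf_{\mu\to\infty}\ \liminf_{n\to\infty}\ \iint_{Q_T}\partial_t w_n\,\phi_\lambda\big(T_k(w_n)-(T_k(w))_\mu\big)\,dx\,dt\ \ge\ 0 ,
\end{equation*}
so that this term cannot destroy the sign in the final inequality. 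Splitting the diffusion term over $\{|w_n|\le k\}$ and $\{|w_n|>k\}$, absorbing the gradient term as above, and passing to the limit first in $n$ and then in $\mu$ yields $\limsup_n$ of the truncated monotonicity quantity $\le 0$, hence the desired convergence to $0$ and thus $\N u_n\to\N u$ a.e.\ in $Q_T$. Finally, the ``moreover'' is immediate: by continuity of $H$ in $\xi$ and $H_n\to H$ we get $H_n(t,x,\N u_n)\to H(t,x,\N u)$ a.e., and since $|H_n(t,x,\N u_n)|\le \ga|\N u_n|^q+f$ with $\{|\N u_n|^q\}_n$ bounded in $L^{p/q}(Q_T)$, $\tfrac{p}{q}>1$, hence equi-integrable, and $f\in L^1(Q_T)$, Vitali's theorem upgrades the convergence to strong convergence in $L^1(Q_T)$.
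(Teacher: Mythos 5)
Your proposal is correct, but it takes a more self-contained route than the paper. The paper's proof is essentially a two-line reduction: since Theorem \ref{sapp} bounds $\{u_n\}_n$ in $L^p(0,T;W^{1,p}_0(\Omega))$, the right-hand sides $H_n(t,x,\N u_n)$ are bounded in $L^1(Q_T)$, and the a.e. convergence of the gradients is then quoted directly from \cite[Theorem $3.3$]{BDGO}; the strong $L^1$ convergence of $H_n(t,x,\N u_n)$ is obtained exactly as you do, by Vitali's theorem, from the equi-integrability of $|\N u_n|^q$ (boundedness in $L^{p/q}(Q_T)$, $p/q>1$) and of $f$. What you do instead is reconstruct by hand the monotonicity machinery that lies inside that citation: the reduction to the vanishing of the truncated monotonicity quantity, the Leray--Lions a.e. convergence lemma, the test functions $\phi_\lambda\bigl(T_k(w_n)-(T_k(w))_\mu\bigr)$ with Landes time-regularization, and the classical $\liminf\ge 0$ inequality for the parabolic term; this is precisely the technology of \cite{BlMu,P,DGP} and is sound as outlined. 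Your remark that for $q<p$ the exponential weight $\phi_\lambda$ is not truly needed is also accurate: equi-integrability of $|\N w_n|^q$, paired via H\"older with $\|\phi_\lambda(\cdots)\|_{L^{(p/q)'}(Q_T)}\to 0$, already makes the gradient term vanish, whereas $\phi_\lambda$ is indispensable only in the natural-growth case $q=p$. What the paper's approach buys is brevity (no time-regularization lemma has to be stated or proved); what yours buys is independence from the cited compactness theorem and an argument that shows transparently where each hypothesis (\eqref{A3}, $q<p$, the energy bound) enters.
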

\proof 
Theorem \ref{sapp} ensures that $\{|\N u_n|\}_n$ is bounded in $ L^p(Q_T)$. In particular, this means that the r.h.s. is bounded in $L^1(Q_T)$. Then, we can reason as in \cite[Theorem $3.3$]{BDGO} and deduce the a.e. convergence of the gradient.

Now, we want to apply the Vitali Theorem in order to get the strong convergence of $H_n(t,x,\N u_n)$. The a.e. convergence of $H_n(t,x,\N u_n)$ to $H(t,x,\N u)$ holds by the a.e. convergence of the gradient seen above. It remains only to show that
\[
\lim_{|E|\to 0}\sup_n\iint_E |H_n(t,x,\N u_n)|\,dx\,dt\to 0,
\]
$E\subset Q_T$. The assumption \eqref{H} ensures that
\begin{align*}
\iint_E |H_n(t,x,\N u_n)|\,dx\,dt&\le \iint_E |\N u_n|^q\,dx\,dt+\iint_E |f|\,dx\,dt.
\end{align*}
and thus, having $\{|\N u_n|^p\}_n$ uniformly bounded in $L^{1}(Q_T)$, $q<p$ and \eqref{F1}, the assertion follows.
\endproof

\subsection{The existence result} 
We are now able to prove the following existence result.
\begin{theorem}\label{Tfe}
Assume $1<p<N$, \eqref{A1}, \eqref{A2}, \eqref{A3}, \eqref{H} with $p-\frac{N}{N+2}\le q<p$ if $\frac{2N}{N+2}<p<N$ and $\frac{p}{2}<q<p$ if $1<p\le \frac{2N}{N+2}$, \eqref{F1} and \eqref{ID1}. Then, there exists at least one finite energy solution $u\in L^p(0,T;W^{1,p}_0(\Omega))$  of \eqref{P} in the sense of Definition \ref{sol} which satisfies
\begin{equation}\label{ctfe}
u\in C([0,T];L^\sigma(\Omega))
\end{equation}
and
\begin{equation}\label{potfe}
|u|^\beta\in L^p(0,T;W^{1,p}_0(\Omega))\quad\text{with}\quad \beta=\frac{\sigma-2+p}{p}.
\end{equation}
\end{theorem}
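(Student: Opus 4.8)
The plan is to obtain the solution by approximation, passing to the limit along the sequence $\{u_n\}_n$ of solutions of \eqref{Pn}. First I would recall that Theorem~\ref{sapp} provides the uniform bound
\[
\sup_{t\in[0,T]}\|u_n(t)\|_{L^\si(\Omega)}^\si+\|\N[(1+|u_n|)^\beta]\|_{L^p(Q_T)}^p\le M,
\]
so that $\{u_n\}_n$ is bounded in $L^\infty(0,T;L^\si(\Omega))\cap L^p(0,T;W^{1,p}_0(\Omega))$ and $\{|u_n|^\beta\}_n$ is bounded in $L^p(0,T;W^{1,p}_0(\Omega))$. Corollary~\ref{simon} then yields a subsequence (not relabelled) and a function $u$ with $u_n\to u$ strongly in $L^p(Q_T)$ and, up to a further extraction, a.e.\ in $Q_T$, while Proposition~\ref{aeD} gives in addition $\N u_n\to\N u$ a.e.\ in $Q_T$ and $H_n(t,x,\N u_n)\to H(t,x,\N u)$ strongly in $L^1(Q_T)$.

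I would then pass to the limit in the weak formulation of \eqref{Pn} against a fixed test function $\vp$ as in Definition~\ref{sol}. The initial term converges since $u_{0,n}\to u_0$ in $L^\si(\Omega)$; the term $\iint_{Q_T}u_n\vp_t$ converges by the strong $L^p(Q_T)$ convergence of $u_n$ together with $\vp_t\in L^{p'}(Q_T)$; and the right-hand side converges thanks to the strong $L^1(Q_T)$ convergence of $H_n(t,x,\N u_n)$ and $\vp\in L^\infty(Q_T)$. For the principal part, \eqref{A2} and the uniform $L^p$ bound on $\N u_n$ show that $\{a(t,x,u_n,\N u_n)\}_n$ is bounded in $L^{p'}(Q_T)^N$; the a.e.\ convergence of $(u_n,\N u_n)$ and the Carath\'eodory character of $a$ identify its weak limit as $a(t,x,u,\N u)$ in $L^{p'}(Q_T)^N$, so that $\iint_{Q_T}a(t,x,u_n,\N u_n)\cdot\N\vp\to\iint_{Q_T}a(t,x,u,\N u)\cdot\N\vp$. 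This shows that $u$ solves \eqref{P} in the sense of Definition~\ref{sol}. The regularity \eqref{potfe} follows at once: $\{|u_n|^\beta\}_n$ is bounded in $L^p(0,T;W^{1,p}_0(\Omega))$, hence converges weakly up to a subsequence, and the a.e.\ convergence $u_n\to u$ forces the limit to be $|u|^\beta$.

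There remains the time-continuity \eqref{ctfe}. Since $u\in L^p(0,T;W^{1,p}_0(\Omega))$ and $H(t,x,\N u)\in L^1(Q_T)$, the equation gives $u_t\in L^{p'}(0,T;W^{-1,p'}(\Omega))+L^1(Q_T)$, so that $t\mapsto\langle u(t),\phi\rangle$ is continuous for every $\phi\in C_c^\infty(\Omega)$; combined with $u\in L^\infty(0,T;L^\si(\Omega))$ and density this yields weak continuity $u\in C_w([0,T];L^\si(\Omega))$. To upgrade it to strong continuity I would test the equation with a time-regularization of $|u|^{\si-2}u$, justified by Steklov averaging, obtaining the identity governing $t\mapsto\|u(t)\|_{L^\si(\Omega)}^\si$ and hence its continuity; since $L^\si(\Omega)$ is uniformly convex for $\si>1$, weak continuity together with continuity of the norm gives \eqref{ctfe}. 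Equivalently, \eqref{ctfe} can be deduced by applying the continuity result \cite[Theorem~$1.1$]{G} to $u$ once this structure of $u_t$ and the bounds of Theorem~\ref{sapp} are available.

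The hard part is this last step. The limit $u$ is not itself an admissible test function and $u_t$ carries an $L^1(Q_T)$ component originating from the gradient term, so the energy identity for $\|u(t)\|_{L^\si(\Omega)}^\si$ cannot be written directly and must be produced by a careful time-regularization and passage to the limit; controlling the term $\iint_{Q_T}|\N u|^q|u|^{\si-1}$ uniformly in this procedure relies on the same Gagliardo--Nirenberg and H\"older splitting used in the proof of Theorem~\ref{sapp}, and is exactly where the superlinearity threshold on $q$ and the definition of $\si$ enter.
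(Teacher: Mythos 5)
Your existence argument is essentially the paper's own: same approximation scheme, same use of Theorem \ref{sapp}, Corollary \ref{simon} and Proposition \ref{aeD}, same identification of the weak $(L^{p'}(Q_T))^N$ limit of $a(t,x,u_n,\nabla u_n)$ through \eqref{A2} and the a.e.\ convergence of $(u_n,\nabla u_n)$, and the same passage to the limit in \eqref{disapp} for \eqref{potfe}. The genuine difference is in the proof of \eqref{ctfe}. The paper never writes an energy identity for $\|u(t)\|_{L^\sigma(\Omega)}^\sigma$: it passes to the limit in \eqref{datotp}, obtaining a bound on $\int_\Omega|G_k(w(t))|^{\sigma}\,dx$ by the tails of $u_0$ and $f$ which is \emph{uniform in} $t$ and vanishes as $k\to\infty$; this gives equi-integrability of $\{|w(t)|^{\sigma}\}_{t\in[0,T]}$, and then the $C([0,T];L^{1}(\Omega))$ regularity of \cite{P} together with Vitali's theorem upgrades $L^1$-continuity in time to $L^\sigma$-continuity. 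Your route (weak continuity from the structure of $u_t$, continuity of the norm from an energy identity, then uniform convexity of $L^\sigma$) can be made to work, but as written it has a soft spot you only partially acknowledge: $|u|^{\sigma-2}u$ is unbounded, so even after Steklov averaging it cannot be paired with the $L^1(Q_T)$ component of $u_t$. One must first test with the truncations $|T_k(u)|^{\sigma-2}T_k(u)$ (admissible by the chain-rule result recalled in Appendix \ref{secpbn}, i.e.\ \cite{PPP}), and then let $k\to\infty$: monotone convergence handles $\iint a(t,x,u,\nabla u)\cdot\nabla u\,|u|^{\sigma-2}\chi_{\{|u|<k\}}$ (nonnegative integrand, bounded uniformly in $k$ by the identity itself), while the domination $|H(t,x,\nabla u)|\,|u|^{\sigma-1}\in L^1(Q_T)$ — which is exactly the Gagliardo--Nirenberg/H\"older bound you invoke — handles the right-hand side. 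What each approach buys: the paper's argument avoids any chain rule for unbounded nonlinearities and any Radon--Riesz step, at the price of invoking the trace result of \cite{P}; yours stays entirely at the level of the limit equation but requires the truncation-and-limit machinery above. Finally, your parenthetical shortcut of applying \cite[Theorem $1.1$]{G} directly to $u$ is not justified: in the paper that result is used only for the bounded approximations $w_n$ of \eqref{Pn}, and $u$ is neither bounded nor does its equation have dual-space data.
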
 
\proof
Let $\{ u_n\}_n\subseteq L^p(0,T;W^{1,p}_0(\Omega))$ be the sequence of solutions of \eqref{Pn}.\\
Theorem \ref{sapp} implies that $\{ u_n\}_n$ and $\{|u_n|^\beta\}_n$ are uniformly bounded in $L^\infty(0,T;L^{\sigma}(\Omega))\cap L^p(0,T;W^{1,p}_0(\Omega))$ and in $L^p(0,T;W^{1,p}_0(\Omega))$ respectively and the inequality \eqref{disapp} holds. Moreover, Corollary \ref{simon} ensures that $u_n\to u$ in $L^p(Q_T)$ (up to subsequences) and, in particular, $u_n\to u$ a.e. (again, up to subsequences). Then, since $\nabla u_n\to \nabla u$ a.e., we let $n\to \infty$ in \eqref{disapp} and conclude that
\[
\sup_{t\in (0,T)}\|u(t)\|_{L^\sigma(\Omega)}^\sigma+\int_0^T\|\N[(1+|u|)^\beta]\|_{L^p(\Omega)}^p\le M
\]
and \eqref{potfe} is proved.\\
The continuity regularity follows by the Vitali Theorem. Indeed, let us consider the limit on $n\to\infty$ in the inequality in \eqref{datotp}, so that we have
\begin{equation*}
\int_{\Omega}|G_k(w(t))|^{\sigma}\,dx
\le\int_{\Omega}|G_k(u_0)|^{\sigma}\,dx+c\left(\int_0^T \||f(s)|\chi_{\{|f|>k\}}\|_{L^m(\Omega)}^r\,ds\right)^{\frac{y\beta}{r[y\beta-(\si-1)]}}
\end{equation*}
for every $k\ge k_0$. Thus, we deduce that $\int_{\Omega}|G_k(w(t))|^{\sigma}\,dx$ converges to zero if $k\to\infty$. This fact provides that 
\[
\int_{E}|w(t)|^{\sigma}\,dx\le \int_{\Omega}|G_k(w(t))|^{\sigma}\,dx+k^\sigma|E|
\]
converges to $0$ if $|E|\to 0$ and $k\to \infty$. Now, let $\{t_j\}_j$ be a sequence such that $t_j\to t$, $t\in [0,T]$, as $j\to \infty$. The continuity regularity $C([0,T];L^1(\Omega))$ proved in \cite{P} allows us to say that $w(t_j)\to w(t)$ in $L^1(\Omega)$ and conclude the proof of \eqref{ctfe}.\\

The a.e. convergence of the gradient and \eqref{A2} imply that
\[
a(t,x,u_n,\nabla u_n)\rightharpoonup a(t,x,u,\nabla u)\quad \text{weakly in}\,\,( L^{p'}(Q_T))^N
\]
and, from Proposition \ref{aeD}, we have
\[
H_n(t,x,\nabla u_n)\to H(t,x,\nabla u)\quad \text{strongly in}\,\, L^{1}(Q_T). 
\]
Moreover, we observe that, by definition of $\{u_{0,n}\}_n$, the convergence $u_{0,n}\to u_0$ in $L^{\sigma}(\Omega)$ holds.\\
Thus, we take the limit on $n$ in the weak formulation of \eqref{Pn}, so we get
\[
u\in L^p(0,T;W^{1,p}_0(\Omega))
\]
and
\begin{equation*}
-\integrale u_0(x)\vp(0,x)\,dx+\iint_{Q_T}-u\vp_t+a(t,x,u,\N u)\cdot\N\vp\,dx\,dt=\iint_{Q_T}H(t,x,\N u)\vp\,dx\,dt
\end{equation*}
for every test function $\vp$ such that
\[
\vp\in L^p(0,T;W_0^{1,p}(\Omega))\cap L^{\infty}(Q_T),\,\,\vp_t\in L^{p'}(Q_T)\text{ and }\vp(T,x)=0,
\]
otherwise, we have recovered Definition \ref{sol}.
\endproof

\section{Solutions of infinite energy} \label{secrin}
\setcounter{equation}{0}

Let us suppose that $\frac{2N}{N+2}<p<N$ and the gradient growth rate satisfying $\max\{\frac{p}{2},p-\frac{N}{N+1}\}< q<p-\frac{N}{N+2}$. In this range of $q$, the optimal conditions on the data \eqref{ID1} and \eqref{F1} do not allow us to have finite energy solutions as in Section \ref{secplap}: in particular, \eqref{ID1} implies that $1<\sigma<2$, then $u_0\in L^\sigma(\Omega)$ does not necessarily belong to $L^2(\Omega)$. This is why we are going to consider a different notion of solution.\\

We define the set of functions $\mathcal{T}^{1,p}_0(Q_T)$ as the set of all measurable functions $u:Q_T\to \mathbb{R}$ almost everywhere finite and such that the truncated functions $T_k(u)$ belong to $ L^p(0,T;W^{1,p}_0(\Omega))$ for all $k>0$. Moreover, in the spirit of \cite{BBGGPV}, we define the {generalized gradient} of a function $u$ in $\mathcal{T}^{1,p}_0(Q_T)$ as follows:
\[
\N T_k(u)=\N u \chi_{\{|u|< k\}}.
\]

\begin{definition}\label{defrin}
We say that a function $u\in \mathcal{T}^{1,p}_0(Q_T)$ is a solution of \eqref{P} if satisfies: 
\begin{equation*}
 H(t,x,\nabla u)\in L^1(Q_T), 
\end{equation*}
\begin{equation}\label{sr2}
\begin{array}{c}
\ds
-\integrale S(u_0)\vp (0,x)\,dx+\iint_{Q_T}-S(u)\vp_t+a(t,x,u,\N u)\cdot \N (S'(u)\vp)\,dx\,ds\\
[3mm]\ds
=\iint_{Q_T}H(t,x,\N u)S'(u)\vp\,dx\,ds
\end{array}
\end{equation}
for every $S\in W^{2,\infty}(\mathbb{R})$ such that $S'(\cdot)$ has compact support and for every test function $\vp\in L^p(0,T;W_0^{1,p}(\Omega))\cap  L^{\infty}(Q_T)$ such that $\vp_t\in L^{p'}(Q_T)$ and $\vp(T,x)=0$.
\end{definition}

\subsection{The a priori estimate} 

\begin{theorem}\label{sapprin}
Let $\frac{2N}{N+2}<p<N$ and assume \eqref{A1}, \eqref{A2}, \eqref{H} with $\max\{\frac{p}{2},p-\frac{N}{N+1}\}< q <p-\frac{N}{N+2}$, \eqref{F1}, \eqref{ID1} and let $\{u_n\}_n$ be a sequence of solutions of \eqref{Pn}. Then, $\{u_n\}_n$ and $\{(1+|u_n|)^{\beta-1}u_n\}_n$, $\beta=\frac{\si+p-2}{p}$, are uniformly bounded, respectively, in $L^{\infty}(0,T;L^{\sigma}(\Omega)) $ and in $L^p(0,T;W^{1,p}_0(\Omega))$. Moreover, the following estimate holds:
\begin{equation}\label{disapprin}
\sup_{t\in [0,T]}\|u_n(t)\|_{L^{\sigma}(\Omega)}^{\sigma}+\|\nabla((1+|u_n|)^{\beta-1}u_n)\|_{L^p(Q_T)}^p\le M
\end{equation}
where the constant $M$ depends on $\alpha$, $p$, $q$, $\gamma$, $N$, $|\Omega|$, $T$, $u_0$, $f$ and remains bounded when $u_0$ and $f$ vary in sets which are bounded and equi-integrable, respectively, in $L^{\sigma}(\Omega)$ and $L^r(0,T;L^m(\Omega))$.
\end{theorem}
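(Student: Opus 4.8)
The plan is to mirror the proof of Theorem \ref{sapp}, the only substantial novelty being that here $1<\sigma<2$ (equivalently $0<\beta<1$), so the pure power test function $|G_k(w_n)|^{\sigma-2}G_k(w_n)$ is singular at the origin and must be replaced by its shifted, renormalized counterpart. First I would perform the same change of variable $w_n=e^{-t}u_n$, turning \eqref{Pn} into \eqref{wn} with a helpful zero-order term and constants $\tilde\alpha,\tilde\lambda,\tilde\gamma$ depending on $T$; since $u_n\le e^Tw_n$ pointwise it suffices to bound $\{w_n\}_n$. I would then test the equation with $[(1+|G_k(w_n)|)^{\sigma-1}-1]\,\text{sign}(w_n)$. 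By \eqref{A1} the diffusion term produces, up to the multiplicative constant $\tilde\alpha(\sigma-1)$, the quantity $\iint_{Q_t}|\nabla G_k(w_n)|^p(1+|G_k(w_n)|)^{\sigma-2}\,dx\,ds$, and since $0<\beta<1$ the elementary pointwise chain $\beta^p(1+|s|)^{\sigma-2}\le(1+|s|)^{(\beta-2)p}(\beta|s|+1)^p\le(1+|s|)^{\sigma-2}$ shows (after multiplying by $|\nabla s|^p$) that this controls, and is controlled by, $\iint_{Q_t}|\nabla((1+|G_k(w_n)|)^{\beta-1}G_k(w_n))|^p\,dx\,ds$; meanwhile the primitive of the test function bounds $\int_\Omega(1+|G_k(w_n(t))|)^\sigma\,dx$ from below, giving the desired $L^\infty(L^\sigma)$ term.

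The right-hand side is then estimated exactly as in Theorem \ref{sapp}. For the gradient contribution I would apply H\"older with exponents $(\frac pq,\frac p{p-q})$, then H\"older with $(\frac{p^*}{p},\frac Np)$ together with the Sobolev embedding; the identity $\sigma-1+\frac q{p-q}=p\beta+\frac{p\sigma}N$ makes the exponents close, and the assumption $q>\frac p2$ is exactly what guarantees $h=\frac\sigma\beta<p^*$ so that Theorem \ref{teoGN} applies. For the forcing term I would use H\"older in space and time with $(m,m')$ and $(r,r')$ and then Theorem \ref{teoGN} in the form \eqref{dis2}, checking via \eqref{F1} that $w\ge m'\frac{\sigma-1}\beta$ and $y\ge r'\frac{\sigma-1}\beta$, and finally Young's inequality to isolate the gradient factor. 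Collecting everything yields the analogue of \eqref{disfin}.

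At this point the genuinely delicate step, and the one I expect to be the main obstacle, is the absorption of the gradient term: because of the superlinear range of $q$ the coefficient in front of $\int_0^t\|\nabla[(1+|G_k(w_n)|)^{\beta-1}G_k(w_n)]\|_{L^p(\Omega)}^p$ is a positive power of $\|G_k(w_n)\|_{L^\infty(0,t;L^\sigma(\Omega))}$ and cannot be absorbed for data of arbitrary size. I would therefore run the same continuity/bootstrap scheme: fix $\delta_0$ so that this coefficient is at most $\tilde\alpha\frac{\sigma-1}{2\beta^p}$ once $\|G_k(w_n)\|_{L^\sigma(\Omega)}^\sigma\le\delta_0$, choose $k_0$ large so that the tails $\|G_k(u_0)\|_{L^\sigma(\Omega)}^\sigma$ and $\||f|\chi_{\{|f|>k\}}\|_{L^r(0,T;L^m(\Omega))}$ are small as in \eqref{u0}--\eqref{f0} (which is precisely where the equi-integrability hypotheses enter), define $T^*$ as the last time the $L^\sigma$ norm stays below $\delta_0$, and use the continuity $w_n\in C([0,T];L^\nu(\Omega))$ from \cite{G} to exclude $T^*<T$. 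This produces, uniformly in $n$, the tail estimate corresponding to \eqref{thGkpp}.

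Finally I would treat the bounded part $T_{k_0}(w_n)$ by a crude energy estimate, testing with $T_{k_0}(w_n)$ (where no smallness is needed since $|T_{k_0}(w_n)|\le k_0$), splitting $w_n=G_{k_0}(w_n)+T_{k_0}(w_n)$ and absorbing $|\nabla T_{k_0}(w_n)|^q$ by Young while controlling the high-energy contribution through the tail bound just obtained. Since on $\{|w_n|\le k_0\}$ the factor $(1+|w_n|)^{\beta-1}$ is comparable to a constant depending on $k_0$, and on $\{|w_n|>k_0\}$ the gradient of $(1+|w_n|)^{\beta-1}w_n$ is comparable to that of $(1+|G_{k_0}(w_n)|)^{\beta-1}G_{k_0}(w_n)$, the two parts combine into a uniform bound for $\|\nabla[(1+|w_n|)^{\beta-1}w_n]\|_{L^p(Q_T)}$ and $\sup_t\|w_n(t)\|_{L^\sigma(\Omega)}^\sigma$; undoing the change of variable then gives \eqref{disapprin}. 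As in Theorem \ref{sapp}, the constant $M$ remains bounded under equi-integrable, bounded variations of $u_0$ and $f$ precisely because $k_0$ was selected through the smallness of the data tails.
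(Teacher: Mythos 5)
Your overall architecture --- the change of variable $w_n=e^{-t}u_n$, a test function adapted to $1<\sigma<2$, H\"older/Sobolev/Gagliardo--Nirenberg estimates, the smallness-plus-continuity absorption, and the final treatment of $T_{k_0}(w_n)$ --- is the same as the paper's. The difference is the choice of regularization of the singular power $|s|^{\sigma-2}s$: you use the unit shift $[(1+|G_k(w_n)|)^{\sigma-1}-1]\,\mathrm{sign}(w_n)$, whereas the paper tests with $\int_0^{G_k(w_n)}(\varepsilon+|z|)^{\sigma-3}|z|\,dz$ and works with $\Phi_\varepsilon(G_k(w_n))$, where $\Phi_\varepsilon(v)=\int_0^v(\varepsilon+|z|)^{(\sigma-3)/p}|z|^{1/p}\,dz$, letting $\varepsilon\to0$ only at the end. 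This difference is not cosmetic: it is exactly where your argument has a genuine gap, located at the step you yourself single out as the delicate one.

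The problem is the behaviour of your shifted functions near $\{G_k(w_n)\approx 0\}$, where (because $\sigma<2$) they do not behave like the pure powers that drive Theorem \ref{sapp}. First, your claim that ``the primitive of the test function bounds $\int_\Omega(1+|G_k(w_n(t))|)^\sigma\,dx$ from below'' is false: the primitive $\Theta(x)=\int_0^x[(1+y)^{\sigma-1}-1]\,dy$ vanishes like $\frac{\sigma-1}{2}x^2$ at the origin, so $\int_\Omega\Theta(|G_k(w_n(t))|)\,dx$ can be arbitrarily small while $\int_\Omega(1+|G_k(w_n(t))|)^\sigma\,dx\ge|\Omega|$; at best one has $\Theta(x)\ge c\min\{x^2,x^\sigma\}$. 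Without repairing this, the continuity argument cannot conclude $\|G_k(w_n(T^*))\|_{L^\sigma(\Omega)}^\sigma<\delta_0$, and the extension of the estimate past $T^*$ fails. Second, the right-hand side cannot be ``estimated exactly as in Theorem \ref{sapp}'': there the gradient variable is the pure power $|G_k|^\beta$, while your energy inequality controls only $v_k=(1+|G_k|)^{\beta-1}G_k$. If you run the chain based on the identity $\sigma-1+\frac{q}{p-q}=p\beta+\frac{p\sigma}{N}$ after simply replacing $|G_k|$ by $1+|G_k|$, the interpolation factor becomes $\|1+|G_k|\|_{L^\sigma(\Omega)}^{\sigma\frac{p-q}{N}}\ge|\Omega|^{\frac{p-q}{N}}$, and Sobolev must be applied to $(1+|G_k|)^\beta\notin W^{1,p}_0(\Omega)$, producing further additive constants; hence the coefficient multiplying $\int_0^t\|\nabla v_k\|_{L^p(\Omega)}^p\,ds$ can never be made small by choosing $\delta_0$ small, and the absorption --- the crux of the superlinear range --- breaks down. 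Both defects are repairable, e.g. via the two-regime bounds $(1+x)^{\sigma-1}-1\le\min\{(\sigma-1)x,\,x^{\sigma-1}\}$ together with the pointwise comparison of the resulting weight with $|v_k|^p|G_k|^{p\sigma/N}$ (which closes exactly because $p-q=\frac{N}{N+\sigma}$), and $\Theta(x)\ge c\min\{x^2,x^\sigma\}$ plus H\"older on $\{|G_k|\le 1\}$; but none of this is in your proposal, and the one explicit claim you make at this point is wrong. It is precisely to avoid these mismatches that the paper regularizes with $(\varepsilon+|z|)^{\sigma-3}|z|$: its test function and $\Phi_\varepsilon$ vanish at the origin, the estimates close (after an extra H\"older step with exponents $\left(\frac{1}{p-q},\frac{1}{q-(p-1)}\right)$, using $q>p-1$) without additive constants, and the limit $\varepsilon\to0$ recovers exactly $\frac{|G_k|^\sigma}{\sigma(\sigma-1)}$ and the power $\frac{|G_k|^\beta}{\beta}$.
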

\proof
We recall the change of variable $w_n:=e^{-t}u_n$ used in Theorem \ref{Tfe}  
and observe again that \eqref{H} and \eqref{A1}-\eqref{A2} still hold with different constants (all depending on $ T <\infty$), say $\bar \gamma$, $\bar \alpha$ and $\bar{\lambda}$.

We take  $\int_0^{G_k(w_n)}(\varepsilon+|z|)^{\sigma-3}|z|\,dz$, $\varepsilon>0$ as test function in \eqref{Pn} and integrate over $Q_t$ for $0\le t\le T$. Thus, thanks to the assumptions \eqref{A1} and \eqref{H}, we have:
\begin{multline*}
\int_{\Omega} \Theta_\varepsilon(G_k(w_n(t)))\,dx+\underbrace{\bar{\al}\iint_{Q_t}|\nabla G_k(w_n)|^p[\varepsilon+|G_k(w_n)|]^{\sigma-3}|G_k(w_n(s))| \,dx\,ds}_A\\
\le \int_{\Omega} \Theta_\vare(G_k(u_0))\,dx+\overbrace{\bar\gamma  \iint_{Q_t} |\nabla G_k(w_n)|^q\biggl(\int_0^{G_k(w_n)}(\varepsilon+|z|)^{\sigma-3}|z|\,dz\biggr)\,dx\,ds}^B\\
+\underbrace{\iint_{\{| f|>k\}\cap A_{k}\}} |f| \biggl(\int_0^{G_k(w_n)}(\varepsilon+|z|)^{\sigma-3}|z|\,dz\biggr) \,dx\,ds}_C
\end{multline*} 
where we have set $\Theta_\vare(v)=\int_0^v \left(\int_0^z (\varepsilon+|s|)^{\sigma-3}|s|\,ds \right)\,dz$, $A_{k}=A_{k,n}:=\{(s,x)\in Q_t:\,\,|w_n(s,x)|>k\}$. We also define the function $\Phi_\varepsilon(v)=\int_0^v (\varepsilon+|z|)^{\frac{\sigma-3}{p}}|z|^{\frac{1}{p}}\,dz$ so we can rewrite the $A$ term as
\[
A=\bar{\al}\iint_{Q_t}|\N \Phi_\varepsilon(G_k(w_n))|^p\,dx\,ds.
\]
Now we are going to deal with the r.h.s.. Let us start with the $B$ term. The definition of $\Phi_\varepsilon(\cdot)$ allows us to estimate as follows
\[
\begin{split}
B&\le \bar\gamma \iint_{Q_t}|\N \Phi_\varepsilon(G_k(w_n))|^q\biggl(\int_0^{G_k(w_n)}(\varepsilon+|z|)^{(\sigma-3)\frac{p-q}{p}}|z|^{\frac{p-q}{p}}\,dz\biggr)\,dx\,ds\\
&\le \bar\gamma  \iint_{Q_t} |\nabla \Phi_\varepsilon(G_k(w_n))|^q 
 |\Phi_\varepsilon(G_k(w_n))|^{p-q}|G_k(w_n)|^{q-p+1}\,dx\,ds
\end{split}
\]
where the last step is due to H\"older's inequality with indices $\left(\frac{1}{p-q},\frac{1}{q-(p-1)}\right)$ (recall that $q>p-1$). An application of the H\"older inequality with indices $\left(\frac{p}{q},\frac{p^*}{p-q},\frac{N}{p-q}\right)$, Sobolev's embedding and the definition of $\si$ (we just recall here that $\si=\frac{N(q-(p-1))}{p-q}$) give us
\begin{align*}
B\le c_1\int_0^t\|G_k(w_n(s))\|_{L^{\sigma}(\Omega)}^{\si\frac{p-q}{N}}\ \|\nabla \Phi_\varepsilon (G_k(w_n(s)))\|_{L^p(\Omega)}^p\,ds
\end{align*}
where $c_1=c_1(\bar \gamma, N,q,T)$.\\
As far as the $C$ term is concerned, we first observe that since $\si < 2$ and the equality $\si-1=\left(\frac{\si-2}{p}+1 \right)\frac{p(\si-1)}{\si+p-2}$ holds, then we have
\begin{equation}\label{ggg}
\int_0^x (\vare+|y|)^{\si-3}|y|\,dy\le c \biggl( \int_0^x  (\vare+|y|)^{\frac{\si-3}{p}}|y|^\frac{1}{p}\,dy\biggr)^{\frac{p(\sigma-1)}{\sigma+p-2}}
\end{equation}
for some $c>0$. Then, this estimate and twice applications of H\"older's inequality with $(m,m')$ and $(r,r')$ imply that we can deal with $C$ as below:
\begin{equation*}
\begin{array}{c}
\ds
C \le c_2\iint_{\{| f|>k\}\cap A_{k}}|f| \biggl(\int_0^{G_k(w_n)}(\varepsilon+|z|)^{\frac{\sigma-3}{p}}|z|^{\frac{1}{p}}\,dz\biggr)^{\frac{p(\sigma-1)}{\sigma+p-2}} \,dx\,ds\\
[3mm]\ds
\le c_2\| |f|\chi_{\{| f|>k\}}\|_{L^r(0,T;L^m(\Omega))} \|\Phi_\varepsilon(G_k(w_n))\|_{L^{r'\frac{\sigma-1}{\beta}}(0,t;L^{m'\frac{\sigma-1}{\beta}}(\Omega))}^{\frac{\sigma-1}{\beta}}
\end{array}
\end{equation*}
Then, recalling the Gagliardo-Nirenberg inequality in Theorem \ref{teoGN}, the definition of $\Phi_\vare(\cdot)$ and the assumption \eqref{F1}, we proceed as in Theorem \ref{sapp} getting
\begin{equation*}
\begin{array}{c}
\ds
C\le c_2\| |f|\chi_{\{| f|>k\}}\|_{L^r(0,T;L^m(\Omega))} \|\Phi_\varepsilon(G_k(w_n))\|_{L^{y}(0,t;L^{w}(\Omega))}^{\frac{\sigma-1}{\beta}}\\
[3mm]\ds
\le c_3\| |f|\chi_{\{| f|>k\}}\|_{L^r(0,T;L^m(\Omega))}^{\frac{y\beta}{y\beta-(\si-1)}}+c_4\|G_k(w_n)\|_{L^{\infty}(0,t;L^{\si}(\Omega))}^{\beta(y-p)}\int_0^t\|\N \Phi_\varepsilon(G_k w_n(s))\|_{L^{p}(\Omega)}^{p}\,ds.
\end{array}
\end{equation*}
We collect the estimates above saying that it holds
\[
\begin{array}{c}
\ds
\integrale \Theta_\varepsilon(G_k(w_n(t)))\,dx +\bar{\al}\int_0^t\|\N \Phi_\varepsilon(G_k w_n(s))\|_{L^{p}(\Omega)}^{p}\,ds \\[3mm]
\ds
\le \left[c_4\|G_k(w_n)\|_{L^{\infty}(0,t;L^{\si}(\Omega))}^{\beta(y-p)}+c_5\|G_k(w_n)\|_{L^{\infty}(0,t;L^{\si}(\Omega))}^{\si\frac{p-q}{N}}\right]
\int_0^t\|\nabla \Phi_\varepsilon (G_kw_n(s))\|_{L^p(\Omega)}^p\,ds 
\\ [3mm]
\ds
+c_3\| |f|\chi_{\{| f|>k\}}\|_{L^r(0,T;L^m(\Omega))}^{\frac{y\beta}{y\beta-(\si-1)}}+\integrale \Theta_\varepsilon(G_k(u_0))\,dx.
\end{array}
\]
Next, we continue reasoning as in Theorem \ref{sapp}, i.e., we fix a value $\bar{\delta}$ such that satisfies the equality\\
$ 2\max\left\{c_4 \bar{\delta}^{\frac{\beta(y-p)}{\si}}, c_5\bar{\delta}^\frac{p-q}{N} \right\}=\frac{\bar{\al}}{2}$. Furthermore, we let $\bar{k}$ large enough so that
\begin{equation}\label{00}
\|G_k(u_0)\|_{L^{\sigma}(\Omega)}^{\sigma}< \frac{\bar\delta}{2}\qquad\forall k\ge \bar{k},
\end{equation}
\begin{equation}\label{ff}
{\sigma(\sigma-1)c_3}\| |f|\chi_{\{| f|>k\}}\|_{L^r(0,T;L^m(\Omega))}^{\frac{y\beta}{y\beta-(\si-1)}}<\frac{\bar{\delta}}{2}\qquad\forall k\ge\bar{k}
\end{equation}
and, for $k\ge \bar{k}$, define $T^*$ as below:
\[
T^*:=\sup\{\tau> 0: \,||G_k(w_n(s))||_{L^{\sigma}(\Omega)}^{\sigma}\le \bar\delta, \,\,\forall   s\le \tau  \}.
\]
We notice again that $T^*>0$ due to \eqref{00} and the continuity of $w_n(t)$ in $L^\si(\Omega)$.
Then, for $ t\le T^*$ and $k\ge \bar{k}$, we have
\begin{equation}\label{ass}
\begin{array}{c}
\ds
\int_{\Omega} \Theta_\varepsilon(G_k(w_n(t)))\,dx+\frac{\bar{\al}}{2} \int_0^t\|\nabla \Phi_\varepsilon(G_k(w_n(s)))\|_{L^{p}(\Omega)}^p\,ds \\
[3mm]
\ds
\le\int_{\Omega} \Theta_\varepsilon(G_k(u_0))\,dx+c_3\| |f|\chi_{\{| f|>k\}}\|_{L^r(0,T;L^m(\Omega))}^{\frac{y\beta}{y\beta-(\si-1)}}.
\end{array}
\end{equation}
We claim that $T^*=T$. Indeed, taking $t=T^*<T$ leads to
\begin{equation*}
\int_{\Omega} \Theta_\varepsilon(G_k(w_n(T^*)))\,dx\le\\
\int_{\Omega} \Theta_\varepsilon(G_k(u_0))\,dx+c_3\|| f|\chi_{\{| f|>k\}}\|_{L^r(0,T;L^m(\Omega))}^{\frac{y\beta}{y\beta-(\si-1)}}\quad\forall k\ge\bar{k}
\end{equation*}
which, by the convergence $\Theta_\vare(G_k(w_n(s)))\underset{\vare\to 0}{\longrightarrow})\frac{|G_k(w_n(s))|^\sigma}{\sigma(\sigma-1}$, implies that
\begin{equation}\label{predeltan}
\|G_k(w_n(T^*))\|_{L^{\sigma}(\Omega)}^{\sigma}\le\\
\|G_k(u_0)\|_{L^{\sigma}(\Omega)}^{\sigma}+\sigma(\sigma-1)c_3\|| f|\chi_{\{| f|>k\}}\|_{L^r(0,T;L^m(\Omega))}^r.
\end{equation}
Thus, the conditions \eqref{00} and \eqref{ff} and the inequality in \eqref{predeltan} would give us
\[
\int_{\Omega}|G_k(w_n(T^*))|^\si\,dx<\bar{\delta}
\]
which is in contrast with the definition of $T^*$ and the continuity regularity.

Recalling that $w_n=G_k(w_n)+T_k(w_n)$, we have just proved that $\{w_n\}_n$ is bounded (uniformly in $n$) in $L^{\infty}(0,T;L^{\sigma}(\Omega))$.\\

As far as the proof of \eqref{disapprin} is concerned, we note that \eqref{ass} guarantees that $|\nabla [\Phi_\varepsilon(G_k(w_n))]|$ is uniformly bounded, in $n$ and in $\varepsilon$, in $L^p(Q_T)$.
Then, being 
\begin{align*}
\int_0^T\|\nabla \Phi_\varepsilon(G_k(w_n(s)))\|_{L^{p}(\Omega)}^p\,ds&\ge  \int_0^T\|\nabla \Phi_1(G_k(w_n(s)))\|_{L^{p}(\Omega)}^p\,ds\\
&=\iint_{Q_T} |\N G_k(w_n)|^p(1+|G_k(w_n)|)^{\sigma-3}|G_k(w_n)|\,dx\,ds\\
&\ge c\iint_{Q_T} |\N G_{k+1}(w_n)|^p(1+|G_{k+1}(w_n)|)^{\sigma-2}\,dx\,ds
\end{align*}
we get an estimate on $\|(1+|G_k(w_n(s))|)^{\beta-1}G_k(w_n(s))\|_{L^p(Q_T)}$ for $k\ge \bar k+1$.\\

Finally, since
\[
\begin{array}{c}
\ds
\iint_{Q_T}|\nabla ((1+|w_n|)^{\beta-1}w_n)|^p\,dx\,dt\le c\iint_{Q_T}\frac{|\N w_n|^p}{(1+|w_n|)^{p(1-\beta)}}\,dx\,dt\\
[3mm]\ds
\le c\iint_{Q_T\cap \{|w_n|>k\}}\frac{|\N G_k(w_n)|^p}{(1+|G_k(w_n)|)^{p(1-\beta)}}\,dx\,dt+c\iint_{Q_T\cap\{|w_n|\le k\}}|\N T_k(w_n)|^p\,dx\,dt
\end{array}
\]
the inequality \eqref{disapprin} follows taking $T_k(w_n)$ as test function and reasoning as in the second part of Theorem \ref{sapp}.
\endproof

\subsection{Some convergence results}
\begin{proposition}\label{abc}
Let $\frac{2N}{N+2}<p<N$ and assume \eqref{A1}, \eqref{A2}, \eqref{A3}, \eqref{H} with $\max\{\frac{p}{2},p-\frac{N}{N+1}\}< q <p-\frac{N}{N+2}$, \eqref{F1}, \eqref{ID1} and let $\{u_n\}_n$ be a sequence of solutions of \eqref{Pn}. Then, for some function $u$, we have that 
\[
u_n\to u\quad\text{a.e. in }\,\,Q_T,
\]
\[
\N u_n\to\N u\quad \text{a.e. in }\,\,Q_T,
\] 
\[
H_n(t,x,\nabla u_n)\to H_n(t,x,\nabla u)\quad\text{strongly in}\quad L^1(Q_T)
\]
and
\[
T_k(u_n)\to T_k(u)\quad\text{strongly in}\quad L^p(0,T;W_0^{1,p}(\Omega))\quad\forall  k>0.
\]
\end{proposition}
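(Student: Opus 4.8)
The plan is to establish the four claims in sequence: starting from the a priori bounds of Theorem \ref{sapprin}, I would first extract a.e.\ convergence of $u_n$ through the compactness of its truncations, then promote this to a.e.\ convergence of the gradient by a localized monotonicity argument, and finally deduce the strong $L^1$ convergence of the reaction term by Vitali. Since $\max\{\frac p2,p-\frac N{N+1}\}<q<p-\frac N{N+2}$ forces $1<\si<2$, we have $\beta=\frac{\si+p-2}{p}\in\big(\frac{p-1}{p},1\big)$, so $\beta-1<0$. On $\{|u_n|\le k\}$ the weight $(1+|u_n|)^{\beta-1}$ is then bounded between $(1+k)^{\beta-1}$ and $1$; comparing $\N\big[(1+|u_n|)^{\beta-1}u_n\big]$ with $\N T_k(u_n)$ on this set, the bound \eqref{disapprin} shows that $\{T_k(u_n)\}_n$ is bounded in $L^p(0,T;W^{1,p}_0(\Omega))$ for every fixed $k>0$.

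To obtain compactness of the truncations I would control their time derivatives: since $\{T_k(u_n)\}_n$ is bounded in $L^p(0,T;W^{1,p}_0(\Omega))$ and, by the equation \eqref{Pn} together with \eqref{A2} and the $L^1(Q_T)$ bound for $H_n(t,x,\N u_n)$ (which follows from \eqref{H}, the equi-integrability of $\{|\N u_n|^q\}_n$ established below, and \eqref{F1}), the sequence $\{(T_k(u_n))_t\}_n$ is bounded in $L^1(0,T;W^{-1,s'}(\Omega))$ with $s>N$, an Aubin--Lions--Simon argument in the spirit of Corollary \ref{simon} (cf.\ \cite[Corollary 4]{S}) gives, up to a subsequence, $T_k(u_n)\to v_k$ strongly in $L^p(Q_T)$ and a.e. The uniform bound in $L^\infty(0,T;L^\si(\Omega))$ controls $|\{|u_n|>k\}|$ uniformly in $n$ via Chebyshev, so the $v_k$ are compatible and, by a diagonal/Cauchy-in-measure argument, define a measurable $u\in\mathcal T^{1,p}_0(Q_T)$ with $v_k=T_k(u)$; this yields $u_n\to u$ a.e.\ in $Q_T$.

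The core of the proof, which simultaneously gives the a.e.\ convergence of $\N u_n$ and the strong convergence of $T_k(u_n)$ in $L^p(0,T;W^{1,p}_0(\Omega))$, is a localized monotonicity argument following \cite[Theorem 3.3]{BDGO}. Fixing $k$, I would test the equation \eqref{Pn} against a regularized function of the form $T_\delta\big(T_k(u_n)-(T_k(u))_\mu\big)$, where $(T_k(u))_\mu$ is a Landes-type time mollification introduced to compensate the lack of regularity of $u_t$, and pass to the limit successively in $n$, then $\mu$, then $\delta$. The lower-order and forcing contributions are absorbed using equi-integrability (here the change of variable $w_n=e^{-t}u_n$ of Theorem \ref{sapprin}, which produces a favourable zero-order term, is convenient), while the gradient term $|\N u_n|^q$ is split over $\{|u_n|\le k\}$, where $|\N T_k(u_n)|^q$ is bounded in $L^{p/q}$ and hence equi-integrable, and over $\{|u_n|>k\}$, where its contribution is made small uniformly in $n$ by the sharp estimate \eqref{disapprin}. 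This leads to
\[
\limsup_{n}\iint_{Q_T}\big(a(t,x,u_n,\N T_k(u_n))-a(t,x,u_n,\N T_k(u))\big)\cdot\big(\N T_k(u_n)-\N T_k(u)\big)\,dx\,dt\le 0,
\]
so the strict monotonicity \eqref{A3} forces $\N T_k(u_n)\to\N T_k(u)$ strongly in $L^p(Q_T)$ for every $k$. Since $\N u_n=\N T_k(u_n)$ on $\{|u_n|<k\}$, letting $k\to\infty$ gives $\N u_n\to\N u$ a.e.\ in $Q_T$.

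Finally, the a.e.\ convergence of the gradient yields $H_n(t,x,\N u_n)\to H(t,x,\N u)$ a.e., and Vitali's theorem upgrades this to strong convergence in $L^1(Q_T)$ once $\{H_n(t,x,\N u_n)\}_n$ is equi-integrable. For a measurable $E\subset Q_T$ I would split $\iint_E|\N u_n|^q$ as $\iint_{E\cap\{|u_n|\le k\}}|\N T_k(u_n)|^q+\iint_{E\cap\{|u_n|>k\}}|\N u_n|^q$; the first integral is at most $|E|^{\frac{p-q}{p}}\big(\iint_{Q_T}|\N T_k(u_n)|^p\big)^{q/p}$ and is small for $|E|$ small at fixed $k$, while the second is controlled by \eqref{disapprin} restricted to $\{|u_n|>k\}$ and is small for $k$ large, uniformly in $n$; together with \eqref{F1} this gives the required equi-integrability. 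The main obstacle is precisely the monotonicity step: in the infinite-energy regime $\N u_n$ is not bounded in $L^p(Q_T)$, so the monotonicity identity must be carried out only on truncations and coupled with the time regularization to handle the missing $u_t$, and the superlinear term $|\N u_n|^q$ must be shown to concentrate no mass on $\{|u_n|>k\}$ as $k\to\infty$ uniformly in $n$ — which is exactly where the sharpness of Theorem \ref{sapprin} is indispensable.
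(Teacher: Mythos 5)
Your overall architecture (compactness of truncations, a monotonicity argument in the spirit of \cite{BDGO}, Vitali's theorem) is viable and in fact close to the paper's proof, which delegates exactly those steps to \cite{BDGO} and to \cite{BlMu,P}. However, there is a genuine gap at the point that carries all the difficulty of this infinite-energy superlinear regime: the uniform $L^1(Q_T)$-bound and equi-integrability of $\{|\N u_n|^q\}_n$, which you invoke both to control time derivatives and inside the monotonicity argument. You claim twice that the contribution of $|\N u_n|^q$ on $\{|u_n|>k\}$ ``is made small uniformly in $n$ by the sharp estimate \eqref{disapprin}''. This does not hold as stated: since $\beta<1$, the natural weighted H\"older splitting
\[
\iint_{\{|u_n|>k\}}|\N u_n|^q\le
\Bigl(\iint_{\{|u_n|>k\}}\frac{|\N u_n|^p}{(1+|u_n|)^{p(1-\beta)}}\,dx\,dt\Bigr)^{q/p}
\Bigl(\iint_{\{|u_n|>k\}}(1+|u_n|)^{\frac{q(2-\sigma)}{p-q}}\,dx\,dt\Bigr)^{(p-q)/p}
\]
requires control of $(1+|u_n|)^{\frac{q(2-\sigma)}{p-q}}$, and the exponent $\frac{q(2-\sigma)}{p-q}$ can exceed $\sigma$ inside the admissible range (for instance $N=3$, $p=2$, $q=1.26$ gives $\sigma\approx 1.05$ while $\frac{q(2-\sigma)}{p-q}\approx 1.61$), so neither \eqref{disapprin} nor the $L^\infty(0,T;L^\sigma(\Omega))$ bound suffices. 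The paper closes this with a step absent from your proposal: it first combines \eqref{disapprin} with Gagliardo--Nirenberg (Theorem \ref{teoGN}, applicable since $q>\frac{p}{2}$ guarantees $\frac{\sigma}{\beta}<p^*$) to get $u_n$ bounded in $L^{p\beta+\frac{p\sigma}{N}}(Q_T)$, and then runs the splitting above on all of $Q_T$ to show that $\{|\N u_n|^\eta\}_n$ is bounded in $L^1(Q_T)$ for the specific exponent $\eta=p\frac{N\beta+\sigma}{N+\sigma}$, which satisfies $q<\eta<p$ precisely because $\sigma>1$ and $\beta<1$. Only then does $q<\eta$, together with \eqref{F1}, yield the $L^1$ bound and the equi-integrability that feed \cite{BDGO} and Vitali; this interpolation step is indispensable and cannot be replaced by \eqref{disapprin} alone.

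A secondary, fixable but real, flaw: you assert that $\{(T_k(u_n))_t\}_n$ is bounded in $L^1(0,T;W^{-1,s'}(\Omega))$. The equation controls $(u_n)_t$, not the time derivative of its truncation, and the formal product $T_k'(u_n)(u_n)_t$ is not meaningful when $(u_n)_t\in L^{p'}(0,T;W^{-1,p'}(\Omega))+L^1(Q_T)$. The paper avoids this either by applying \cite[Corollary 4]{S} directly to $u_n$ (possible once the $\eta$-bound above gives $\N u_n$ bounded in $L^\eta(Q_T)$ with $\eta\ge 1$, i.e.\ when $p>2-\frac{1}{N+1}$), or, for $\frac{2N}{N+2}<p\le 2-\frac{1}{N+1}$, by applying Simon's lemma to a regularization of $T_k(u_n)$ as in \cite[Theorem 2.1]{P}; your argument should do the same.
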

\proof 
\textit{The boundedness of $\{|\N u_n|^\eta \}_n$.}\\
First of all, we prove that $\{|\N u_n|^\eta\}_n$ is uniformly bounded in $L^1(Q_T)$ for some $q<\eta<p$. Indeed, this fact will allow us to reason as in \cite[Theorem $3.3$]{BDGO} (being the r.h.s. uniformly bounded in $L^1(Q_T)$) and get the a.e. convergence of the gradient.

Since $(1+|u_n|)^{\beta-1}u_n<(1+|u_n|)^{\beta}$ for every $n\in \mathbb{N}$, Theorem \ref{sapprin} implies that $\{(1+|u_n|)^{\beta-1}u_n\}_n$ is uniformly bounded in $ L^\infty(0,T;L^{\frac{\sigma}{\beta}}(\Omega))\cap  L^p(0,T;W_0^{1,p}(\Omega))$. Note that $\frac{\si}{\beta}<p^*$ if and only if $p>\frac{2N}{N+\si}$ which is $q>\frac{p}{2}$: we thus apply Gagliardo-Nirenberg regularity results and deduce that $\{(1+|u_n|)^{\beta-1}u_n\}_n$ is bounded in $ L^{p\frac{N\beta+\sigma}{N\beta}}(Q_T)$.
Moreover, being $\{(1+|u_n|)^{\beta-1}u_n\}_n$ bounded in $L^p(0,T;W_0^{1,p}(\Omega))$ too, we have that
\begin{equation}\label{bbb}
\begin{split}
c>&\iint_{Q_T}|\N [(1+|u_n|)^{\beta-1}|u_n| ]|^p\,dx\,dt\\
&=\iint_{Q_T}\frac{ |\N u_n|^p}{(1+|u_n|)^{p(2-\beta)}}(1+\beta |u_n|)^p\,dx\,dt\\
&> \beta^p\iint_{Q_T}\frac{ |\N u_n|^p}{(1+|u_n|)^{p(1-\beta)}}\,dx\,dt.
\end{split}
\end{equation}
We now look for a bound for a suitable power of the gradient, that is, we employ \eqref{bbb} so that
\begin{align*}
\iint_{Q_T}|\nabla u_n|^\eta\,dx\,ds&= \iint_{Q_T}\frac{|\nabla u_n|^\eta}{(1+|u_n|)^{\eta(1-\beta)}}(1+|u_n|)^{\eta(1-\beta)} \,dx\,dt\\
&\le \biggl(\iint_{Q_T} \frac{|\nabla u_n|^p}{(1+|u_n|)^{p(1-\beta)}}\,dx\,dt\biggr)^{\frac{\eta}{p}}\biggl(\iint_{Q_T} (1+|u_n|)^{p\eta\frac{(1-\beta)}{p-\eta}}\,dx\,dt\biggr)^{\frac{p-\eta}{p}}\\
&\le c.
\end{align*}
Hence, we have to choose $\eta$ such that $\ds p\eta\frac{(1-\beta)}{p-\eta}=p\frac{N\beta+\sigma}{N}$. This condition leads to $\ds\eta=p\frac{N\beta+\sigma}{ N+\sigma}=N(q-(p-1))+2q-p$ and thus it holds $q< p\frac{N\beta+\sigma}{N+\sigma}<p$ since $\beta<1$
and $\sigma> 1$. \\

\noindent
\textit{The a.e. convergence $u_n\to u$.}\\
So far, we have that $\{|\N u_n|^\eta\}_n$ is bounded in $L^1(Q_T)$. In particular, if $p>2-\frac{1}{N+1}$, then $\{|\N u_n|\}_n$ is bounded in $L^\eta(Q_T)$. This means that we can invoke \cite[Corollary $4$]{S} with $X=W^{1,\eta}_0(\Omega)$, $B=L^\eta(Q_T)$ and $Y=W^{-1,s'}(\Omega)$ with $s>N$ obtaining that $\{u_n\}_n$ is compact in $L^\eta(Q_T)$. We thus deduce that, up to subsequences, $u_n\to u$ a.e..

If, otherwise, $\frac{2N}{N+2}<p\le 2-\frac{1}{N+1}$, we reason as in \cite[Theorem $2.1$]{P}. In particular, we point out that \cite[Corollary $4$]{S} is applied (as in Corollary \ref{simon}) to a regularization of the truncation function $T_k(u_n)$ instead of to $u_n$ itself. \\

\noindent
\textit{The a.e. convergence $\N u_n\to \N u$.}\\
Since the r.h.s. is bounded in $L^1(Q_T)$ and $u_n\to u$ a.e., the  a.e. convergence of the gradients follows from \cite{BDGO}.\\

\noindent
\textit{The strong convergence $H_n(t,x,\N u_n)\to H_n(t,x,\N u)$ in $L^1(Q_T)$.}\\
We conclude saying that, since $\eta>q$, we get the equi-integrability of the r.h.s. $H_n(t,x,\N u_n)$ in $L^1(Q_T)$ and so an application of the Vitali Theorem gives us the desired convergence as well.\\

\noindent
\textit{The strong convergence $T_k(u_n)\to T_k(u)$ in $L^p(0,T;W_0^{1,p}(\Omega))$.}\\
The convergence in $L^1(Q_T)$ of the r.h.s. allows us to reason as in \cite{BlMu,P}, getting the strong convergence of the truncation functions.
\endproof

\subsection{The existence result}
\begin{theorem}\label{Trin}
Let $\frac{2N}{N+2}<p<N$ and assume \eqref{A1}, \eqref{A2}, \eqref{A3}, \eqref{H} with $\max\{\frac{p}{2},p-\frac{N}{N+1}\}< q <p-\frac{N}{N+2}$, \eqref{F1} and \eqref{ID1}. 
Then, there exists at least one solution of the problem \eqref{P} in the sense of Definition \ref{defrin} satisfying
\begin{equation}\label{reg}
(1+|u|)^{\beta-1}u\in L^p(0,T;W_0^{1,p}(\Omega))\quad\text{with}\quad \beta=\frac{\sigma-2+p}{p}
\end{equation}
and
\begin{equation}\label{ct}
u\in C([0,T];L^\sigma(\Omega)).
\end{equation}
\end{theorem}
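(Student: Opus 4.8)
The plan is to argue by approximation, following the same scheme as in the proof of Theorem~\ref{Tfe}, the only difference being that we now pass to the limit in the renormalized formulation \eqref{sr2} rather than in the weak formulation of Definition~\ref{sol}. Let $\{u_n\}_n$ be the sequence of solutions of \eqref{Pn}. Theorem~\ref{sapprin} supplies the uniform bounds of $\{u_n\}_n$ in $L^\infty(0,T;L^\si(\Omega))$ and of $\{(1+|u_n|)^{\beta-1}u_n\}_n$ in $L^p(0,T;W^{1,p}_0(\Omega))$, together with the a priori inequality \eqref{disapprin}. Proposition~\ref{abc} then provides, up to a subsequence, the a.e.\ convergences $u_n\to u$ and $\N u_n\to\N u$ in $Q_T$, the strong convergence $H_n(t,x,\N u_n)\to H(t,x,\N u)$ in $L^1(Q_T)$, and the strong convergence $T_k(u_n)\to T_k(u)$ in $L^p(0,T;W^{1,p}_0(\Omega))$ for every $k>0$; the latter already yields $u\in\mathcal{T}^{1,p}_0(Q_T)$, while the former gives $H(t,x,\N u)\in L^1(Q_T)$, so the first requirement of Definition~\ref{defrin} is fulfilled.

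To recover \eqref{sr2}, I would fix $S\in W^{2,\infty}(\mathbb{R})$ with $\mathrm{supp}\,S'\subset[-k,k]$ and an admissible test function $\vp$, and expand
\[
\N(S'(u_n)\vp)=S''(u_n)\vp\,\N u_n+S'(u_n)\N\vp .
\]
Since $S'$ and $S''$ vanish outside $[-k,k]$, every occurrence of $u_n$ and $\N u_n$ in the diffusion term is confined to the set $\{|u_n|<k\}$, where $u_n=T_k(u_n)$ and $\N u_n=\N T_k(u_n)$: this is exactly the point where the truncation convergence of Proposition~\ref{abc} enters. The parabolic term $-S(u_n)\vp_t$ passes to the limit by dominated convergence, as $S$ is bounded and $u_n\to u$ a.e.; the reaction term $H_n(t,x,\N u_n)\,S'(u_n)\vp$ passes by the strong $L^1$ convergence of $H_n$ coupled with the uniform bound on $S'(u_n)\vp$; and the initial-datum term is handled by $u_{0,n}\to u_0$ in $L^\si(\Omega)$.

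The hard part will be the diffusion term. By \eqref{A2} and the boundedness of $\{(1+|u_n|)^{\beta-1}u_n\}_n$ in $L^p(0,T;W^{1,p}_0(\Omega))$, the fields $a(t,x,T_k(u_n),\N T_k(u_n))$ are bounded in $(L^{p'}(Q_T))^N$; combined with the a.e.\ convergence of the gradients this yields $a(t,x,T_k(u_n),\N T_k(u_n))\rightharpoonup a(t,x,T_k(u),\N T_k(u))$ weakly in $(L^{p'}(Q_T))^N$. Pairing this weak convergence with the \emph{strong} convergence $\N T_k(u_n)\to\N T_k(u)$ in $L^p(Q_T)$ and with the a.e., uniformly bounded convergence of $S'(u_n)$ and $S''(u_n)$ lets me pass to the limit in both pieces of $a(t,x,u_n,\N u_n)\cdot\N(S'(u_n)\vp)$; the term $S''(u_n)\vp\,a\cdot\N u_n$ is the one for which merely weak gradient control would fail and where the strong truncation convergence is indispensable.

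Finally, the regularity \eqref{reg} follows by letting $n\to\infty$ in \eqref{disapprin} and invoking the a.e.\ convergence of the gradients together with Fatou's lemma, giving $(1+|u|)^{\beta-1}u\in L^p(0,T;W^{1,p}_0(\Omega))$. The continuity \eqref{ct} is then obtained exactly as in Theorem~\ref{Tfe}: passing to the limit in \eqref{ass} shows, for $w=e^{-t}u$, that $\int_\Omega|G_k(w(t))|^\si\,dx\to0$ as $k\to\infty$ uniformly in $t$, so that $\{|w(t)|^\si\}_t$ is equi-integrable; combining this with the $C([0,T];L^1(\Omega))$ continuity of $w$ and applying the Vitali theorem upgrades the regularity to $u\in C([0,T];L^\si(\Omega))$.
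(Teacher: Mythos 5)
Your proposal is correct and follows essentially the same route as the paper: both proofs pass to the limit in the formulation tested against $S'(u_n)\vp$, exploiting that $\mathrm{supp}\,S'$ reduces everything to truncations, and both rest on the same ingredients (Theorem~\ref{sapprin}, Proposition~\ref{abc} — in particular the strong convergence of $T_k(u_n)$ for the term $S''(u_n)\vp\,a\cdot\N u_n$ — plus Fatou for \eqref{reg} and Vitali for \eqref{ct}). The only cosmetic difference is that you justify the diffusion term by a weak--strong pairing in $L^{p'}\times L^p$, whereas the paper upgrades the convergence of the vector fields and of the energy term to strong $(L^{p'})^N$ and $L^1$ convergence, respectively; both are immediate consequences of the same truncation convergence.
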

\proof
Let $\{u_n\}_n\subseteq L^p(0,T;W^{1,p}_0(\Omega))$ be a sequence of weak solutions of \eqref{Pn}. \\
The uniform bound \eqref{disapprin} and the a.e. convergences $u_n\to u$ and $\N u_n\to \N u$ imply that $u$ satisfies the following inequality:
\begin{equation*}
\sup_{t\in [0,T]}\|u(t)\|_{L^{\sigma}(\Omega)}^{\sigma}+\|\nabla((1+|u|)^{\beta-1}u)\|_{L^p(Q_T)}^p\le c.
\end{equation*}
This means that $u\in L^\infty(0,T;L^{\sigma}(\Omega))$ and $(1+|u|)^{\beta-1}u\in L^p(0,T;W_0^{1,p}(\Omega))$.

The continuity regularity \eqref{ct} is a consequence of the Vitali Theorem and can be proved as in Theorem \ref{Tfe} (taking into account the inequality \eqref{predeltan}).

Now, we focus on \eqref{sr2}.
We multiply the equation in \eqref{Pn} for $S'(u_n)\vp$ and integrate over $Q_T$, getting
\begin{equation}\label{fdn}
\begin{array}{c}
\ds
-\integrale S(u_n(0))\vp (0,x)\,dx+\iint_{Q_T}-S(u_n)\vp_t\,dx\,ds
+\iint_{Q_T}S'(u_n)a(t,x,u_n,\N u_n)\cdot \N \vp\,dx\,ds
\\
[3mm]\ds
+\iint_{Q_T}S''(u_n)a(t,x,u_n,\N u_n)\cdot \N u_n\vp\,dx\,ds
=\iint_{Q_T}H(t,x,\N u_n)S'(u_n)\vp\,dx\,ds
\end{array}
\end{equation}
where $\vp\in L^p(0,T;W_0^{1,p}(\Omega))\cap L^\infty(Q_T)$, $\vp_t\in L^{p'}(Q_T)$ and $\vp(T,x)=0$.
The proof will be concluded once we show that the limit on $n\to \infty$ can be taken in \eqref{fdn}. Note that, being $\text{supp}(S'(u_n))\subseteq [-M,M]$, the equation \eqref{fdn} takes into account only $T_M(u_n)$.

The previous remark and Proposition \ref{abc} imply that
\begin{multline}\label{convg1}
S'(u_n)a(t,x,u_n,\nabla u_n)=S'(u_n)a(t,x,T_M(u_n),\nabla T_M(u_n))\\
\to S'(u)a(t,x,T_M(u),\nabla T_M(u))\quad\text{in}\,\,(L^{p'}(Q_T))^N.
\end{multline}

Moreover, since
\[
a(t,x,u_n,\nabla u_n)\cdot\nabla u_n=a(t,x,T_M(u_n),\nabla T_M(u_n)\cdot\nabla T_M(u_n).
\]
the strong convergence of $T_M(u_n)\to T_M(u)$ in  $L^p(0,T;W^{1,p}_0(\Omega))$ implies that
\[
a(t,x,T_M(u_n),\nabla T_M(u_n))\cdot\nabla T_M(u_n)
\to a(t,x,T_M(u),\nabla T_M(u))\cdot\nabla T_M(u) \qquad\text{in}\,\,L^1(Q_T).
\]
Having $S''(u_n)\to S''(u)$ pointwisely and being $S''(\cdot)$ bounded by assumptions give us the following convergence:
\begin{multline}\label{convg2}
S''(u_n)a(t,x,T_M(u_n),\nabla T_M(u_n))\cdot\nabla T_M(u_n)
\to S''(u)a(t,x,T_M(u),\nabla T_M(u))\cdot\nabla T_M(u) \qquad\text{in}\,\,L^1(Q_T).
\end{multline}
\indent
As far as the r.h.s. is concerned, Proposition \ref{abc} guarantees that
\begin{equation}\label{convg3}
S'(u_n)H_n(t,x,\N u_n)=S'(u_n)H_n(t,x,\N T_M(u_n))
\to S'(u)H(t,x,\N T_M(u))\qquad\text{in}\,\, L^1(Q_T).
\end{equation}


Finally, since
\[
S(u_n)\to S(u)\qquad\text{ in}\quad L^p(0,T;W^{1,p}(\Omega)),
\]
\[
(S(u_n))_t\to (S(u))_t\qquad\text{ in}\quad L^{p'}(0,T;W^{-1,p'}(\Omega))+L^1(Q_T)
\]
and thus $S(u_n)\to S(u)$ strongly in $C([0,T];L^1(\Omega))$ thanks to \cite[Theorem $1$]{P}, we can take the limit in \eqref{fdn} finding Definition \ref{defrin}.
\endproof

\begin{remark}\label{secq}

We briefly present the case when $\frac{2N}{N+1}<p<N$ and the growth rate of the gradient is $q=p-\frac{N}{N+1}$ and the initial datum is taken in the Lebesgue space $L^{1+\omega}(\Omega)$ for $\omega\in(0,1)$ and we are looking for renormalized solutions in the sense of Definition \ref{defrin}. Note that this value of $q$ implies that $\sigma=1$, so our running assumptions are not the sharp ones.\\
However, having a stronger regularity on the initial datum allows us to repeat the proofs presented in Section \ref{secrin}. In particular, the a priori estimate reads as below:
\begin{theorem}\label{sapprinc}
Assume \eqref{A1}, \eqref{A2}, \eqref{H} with $q=p-\frac{N}{N+1}$, $u_0\in L^{1+\omega}(\Omega)$, $\omega>0$, \eqref{F2} and let $\{u_n\}_n$ be a sequence of solutions of \eqref{Pn}. Then, $\{u_n\}_n$ and $\{(1+|u_n|)^{\nu-1}u_n\}_n$, $\nu=\frac{p-1+\omega}{p}$, are uniformly bounded, respectively, in $L^{\infty}(0,T;L^{1+\omega}(\Omega)) $ and in $L^p(0,T;W^{1,p}_0(\Omega))$. Moreover, the following estimate holds:
\begin{equation}\label{disapprinc}
\sup_{t\in [0,T]}\|u_n(t)\|_{L^{1+\omega}(\Omega)}^{1+\omega}+\|\nabla((1+|u_n|)^{\nu-1}u_n)\|_{L^p(Q_T)}^p\le M 
\end{equation}
where the constant $M$ depends on $\alpha$, $p$, $q$, $\gamma$, $N$, $|\Omega|$, $T$, $\omega$, $u_0$, $f$ and remains bounded when $u_0$ and $f$ vary in sets which are, respectively, bounded in $L^{1+\omega}(\Omega)$ and equi-integrable in $L^1(Q_T)$.
\end{theorem}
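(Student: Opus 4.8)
The plan is to mirror, almost verbatim, the a priori estimate of Theorem \ref{sapprin}, replacing the exponent $\si$ by $1+\omega$ and $\beta$ by $\nu=\frac{(1+\omega)+p-2}{p}$ throughout. Since $q=p-\frac{N}{N+1}$ forces the critical value $\si=\frac{N(q-(p-1))}{p-q}=1$, working in $L^{1+\omega}(\Omega)$ with $\omega>0$ places us in a strictly subcritical regime with a little room to spare. First I would perform the change of variable $w_n=e^{-t}u_n$, so that \eqref{wn} acquires the favourable zero order term $+w_n$ while \eqref{A1}, \eqref{A2} and \eqref{H} survive with constants $\bar\alpha,\bar\lambda,\bar\gamma$ depending on $T<\infty$. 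Then I would test \eqref{Pn} with $\int_0^{G_k(w_n)}(\varepsilon+|z|)^{(1+\omega)-3}|z|\,dz$ and integrate over $Q_t$, introducing $\Theta_\varepsilon$ on the l.h.s. (which produces $\frac{1}{\omega(1+\omega)}\|G_k(w_n(t))\|_{L^{1+\omega}(\Omega)}^{1+\omega}$ as $\varepsilon\to0$) and $\Phi_\varepsilon(v)=\int_0^v(\varepsilon+|z|)^{\frac{(1+\omega)-3}{p}}|z|^{\frac1p}\,dz$, so that the coercive term becomes $\bar\alpha\|\N\Phi_\varepsilon(G_k(w_n))\|_{L^p(\Omega)}^p$ and $\Phi_\varepsilon(G_k(w_n))\approx|G_k(w_n)|^{\nu}$.

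The gradient term $B$ is treated exactly as in Theorem \ref{sapprin}: a pointwise H\"older step (valid because $q>p-1$) bounds it by $\iint_{Q_t}|\N\Phi_\varepsilon(G_k(w_n))|^q|\Phi_\varepsilon(G_k(w_n))|^{p-q}|G_k(w_n)|^{q-p+1}\,dx\,ds$, after which H\"older with $\bigl(\frac pq,\frac{p^*}{p-q},\frac N{p-q}\bigr)$, the Sobolev inequality and the identity $(q-p+1)\frac{N}{p-q}=\si=1$ yield $B\le c\,\|G_k(w_n)\|_{L^\infty(0,t;L^{1}(\Omega))}^{\frac{p-q}{N}}\int_0^t\|\N\Phi_\varepsilon(G_k(w_n))\|_{L^p(\Omega)}^p\,ds$, and on the bounded domain $\Omega$ the embedding $L^{1+\omega}(\Omega)\hookrightarrow L^1(\Omega)$ upgrades the factor to $\|G_k(w_n)\|_{L^\infty(0,t;L^{1+\omega}(\Omega))}^{\frac{p-q}{N}}$. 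The Gagliardo--Nirenberg step (Theorem \ref{teoGN}) is licit precisely because the standing restriction $p>\frac{2N}{N+1}$ guarantees $\frac{1+\omega}{\nu}<p^*$, which is the analogue here of $q>\frac p2$. The zero order term $+w_n$ absorbs, exactly as in Theorem \ref{sapp}, the portion $\iint_{A_k}|f|\chi_{\{|f|\le k\}}\psi(G_k(w_n))\le k\iint_{A_k}\psi(G_k(w_n))$, leaving only the genuinely delicate piece $C=\iint_{A_k\cap\{|f|>k\}}|f|\,\psi(G_k(w_n))\,dx\,ds$.

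Once $B$ and $C$ are estimated, I would close the argument exactly as in Theorem \ref{sapprin}: fix $\bar\delta$ small enough that the bracketed coefficient multiplying $\int_0^t\|\N\Phi_\varepsilon(G_k(w_n))\|_{L^p(\Omega)}^p\,ds$ is at most $\frac{\bar\alpha}{2}$ whenever $\|G_k(w_n)\|_{L^\infty(0,t;L^{1+\omega}(\Omega))}^{1+\omega}\le\bar\delta$, then pick $\bar k$ so large that both $\|G_k(u_0)\|_{L^{1+\omega}(\Omega)}^{1+\omega}$ and the contribution of $C$ are below $\frac{\bar\delta}{2}$ for $k\ge\bar k$ --- this is where $u_0\in L^{1+\omega}(\Omega)$ and the equi-integrability of $f$ in $L^1(Q_T)$ enter. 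Defining $T^*=\sup\{\tau:\|G_k(w_n(s))\|_{L^{1+\omega}(\Omega)}^{1+\omega}\le\bar\delta\ \forall s\le\tau\}$ and invoking the continuity $w_n\in C([0,T];L^{1+\omega}(\Omega))$ rules out $T^*<T$, yielding a bound on $\sup_{t}\|G_k(w_n(t))\|_{L^{1+\omega}(\Omega)}^{1+\omega}+\|\N\Phi_1(G_k(w_n))\|_{L^p(Q_T)}^p$. Since $\|\N\Phi_1(G_k(w_n))\|_{L^p(Q_T)}^p\ge c\,\|\N[(1+|G_{k+1}(w_n)|)^{\nu-1}G_{k+1}(w_n)]\|_{L^p(Q_T)}^p$ (because $p(\nu-1)=(1+\omega)-2$), adding the bound for $T_k(w_n)$ obtained as in Part $2$ of Theorem \ref{sapp} gives \eqref{disapprinc}, and reverting $w_n=e^{-t}u_n$ transfers everything to $\{u_n\}_n$.

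The \emph{main obstacle} is the forcing piece $C$ under the mere assumption $f\in L^1(Q_T)$. In Theorem \ref{sapprin} this term was controlled by H\"older with $(m,m')$ and $(r,r')$ followed by Gagliardo--Nirenberg, a pairing that degenerates for $m=r=1$, since the conjugate exponents become infinite and $\psi(G_k(w_n))\approx|G_k(w_n)|^{\omega}$ is not bounded. The resolution I would pursue exploits the extra summability carried by $u_0\in L^{1+\omega}(\Omega)$: on $[0,T^*]$ the coercive term already controls $\Phi_\varepsilon(G_k(w_n))\approx|G_k(w_n)|^{\nu}$ in a parabolic Lebesgue space depending only on $\bar\delta$, while the additional \emph{space--time} control $\iint_{Q_t}|G_k(w_n)|^{1+\omega}$ furnished by the zero order term $+w_n$ (note $w_n\,\psi(G_k(w_n))\ge c\,|G_k(w_n)|^{1+\omega}$ on $A_k$) provides the integrability needed to dominate $|f|\,\psi(G_k(w_n))$ on $\{|f|>k\}$ by the $L^1$--equi-integrable quantity $\iint_{\{|f|>k\}}|f|\,dx\,ds\to0$ as $k\to\infty$. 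Turning this heuristic into a rigorous bound --- in place of the H\"older/Gagliardo--Nirenberg pairing of the superlinear case, which is no longer available --- possibly through a preliminary $L^\infty(0,T;L^1(\Omega))$ estimate that is then bootstrapped to $L^{1+\omega}$, is the step that requires the most care.
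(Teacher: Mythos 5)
Your overall plan is the same as the paper's: Theorem \ref{sapprinc} sits inside Remark \ref{secq}, and the paper's entire ``proof'' is the sentence that the stronger initial datum ``allows us to repeat the proofs presented in Section \ref{secrin}''. Your transcription of Theorem \ref{sapprin} with $\si\mapsto 1+\omega$, $\beta\mapsto\nu$ executes correctly every step that actually transfers: the change of variable, the test function $\int_0^{G_k(w_n)}(\varepsilon+|z|)^{\omega-2}|z|\,dz$, the gradient term $B$ (where the three-index H\"older/Sobolev step and the identity $(q-p+1)\frac{N}{p-q}=1$ produce the factor $\|G_k(w_n)\|_{L^\infty(0,t;L^{1}(\Omega))}^{\frac{p-q}{N}}$, upgraded via $L^{1+\omega}(\Omega)\hookrightarrow L^{1}(\Omega)$), the $\bar\delta$--$T^*$--continuity absorption, and the recovery of \eqref{disapprinc} from $\Phi_1$ and $T_{\bar k}(w_n)$.

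The genuine gap is the one you flag yourself --- the forcing term under \eqref{F2} --- and the repair you sketch cannot be made rigorous. There is no inequality of the form $\iint_{Q_T}|f|\,g\le C\bigl(\|f\|_{L^1(Q_T)},\|g\|_{L^s(Q_T)}\bigr)$ for any finite $s$: a function that is merely ($L^1$-bounded and) equi-integrable pairs boundedly only against $L^\infty$. Hence neither the Gagliardo--Nirenberg control of $|G_k(w_n)|^{\nu}$ nor the space--time quantity $\iint_{Q_t}|G_k(w_n)|^{1+\omega}$ coming from the zero order term can dominate $C\approx\iint_{\{|f|>k\}}|f|\,|G_k(w_n)|^{\omega}$, because $f$ may carry its mass exactly where $|G_k(w_n)|$ is large; quantitatively, a Chebyshev/dyadic splitting over $\{|G_k(w_n)|>2^{j}\Lambda\}$ needs a decay \emph{rate} for $h\mapsto\iint_{\{|f|>h\}}|f|$ to sum the series $\sum_j 2^{j\omega}(\cdots)$, and equi-integrability provides none (think of $\iint_{\{|f|>h\}}|f|\sim 1/\log h$). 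This is consistent with \eqref{F1}: evaluated at $\si=1+\omega$ and $r=1$ it forces $m\ge 1+\omega$, so the assumption actually compatible with the $L^{1+\omega}$ scheme is $f\in L^1(0,T;L^{1+\omega}(\Omega))$, not \eqref{F2}. Under that assumption the troublesome term closes at once, with no Gagliardo--Nirenberg at all: by \eqref{ggg} and H\"older in the space variable only,
\[
C\le c\,\bigl\|\,|f|\chi_{\{|f|>k\}}\bigr\|_{L^1(0,T;L^{1+\omega}(\Omega))}\,\sup_{s\le t}\|G_k(w_n(s))\|_{L^{1+\omega}(\Omega)}^{\omega}\le c\,\bigl\|\,|f|\chi_{\{|f|>k\}}\bigr\|_{L^1(0,T;L^{1+\omega}(\Omega))}\,\bar\delta^{\frac{\omega}{1+\omega}}
\]
on $[0,T^*]$, and the absorption proceeds exactly as in Theorem \ref{sapprin}. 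So you should either strengthen the hypothesis on $f$ accordingly --- noting that the paper's own statement, which keeps \eqref{F2} and supplies no proof, glosses over precisely this point --- or find a genuinely new argument (for instance an $L^{1+\omega}$ version of Lemma \ref{MGN}, which would give $|\N w_n|\in M^{p-\frac{N}{N+1+\omega}}(Q_T)$ and hence equi-integrability of $|\N w_n|^{q}$ since $q=p-\frac{N}{N+1}<p-\frac{N}{N+1+\omega}$; but that route presupposes the very $L^\infty(0,T;L^{1+\omega}(\Omega))$ bound that the $C$ term blocks). As written, the proposal does not prove the stated theorem.
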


We observe that the case $q=p-\frac{N}{N+1}$ could be dealt with taking the initial datum $u_0$ in the Orlicz space
\[
u_0\in L^1 ((\log L)^1).
\]

\end{remark}

\section{Case of $L^1$ data}\label{secrin2}
\setcounter{equation}{0}

We finally take into account the case with $\frac{2N}{N+1}<p<N$ and $\max\{\frac{p}{2},\frac{p(N+1)-N}{N+2}\}<q< p-\frac{N}{N+1}$.\\
These ranges of $q$ imply that the value $\sigma$ (defined in \eqref{ID1}) is smaller than one. In this range even measure data could be considered, however, we focus on $L^1(\Omega)$ data for the sake of simplicity. We go further introducing our current notion of renormalized solution. 

\begin{definition}\label{defrin2}
We say that a function $u\in \mathcal{T}_0^{1,p}(Q_T)$ is a renormalized solution of \eqref{P} if satisfies
\begin{equation*}
 H(t,x,\nabla u)\in L^1(Q_T),
\end{equation*}
\begin{equation}\label{sr22}\tag{RS.1}
\begin{array}{c}
\ds
-\integrale S(u_0)\vp (0,x)\,dx+\iint_{Q_T}-S(u)\vp_t+a(t,x,u,\N u)\cdot \N (S'(u)\vp)\,dx\,ds\\
[3mm]\ds
=\iint_{Q_T}H(t,x,\N u)S'(u)\vp\,dx\,ds
\end{array}
\end{equation}
\begin{equation}\label{sr32}\tag{RS.2}
\lim_{n\to \infty}\frac{1}{n}\iint_{\{n\le |u|\le 2n\}}a(t,x,u,\N u)\cdot \N u=0, 
\end{equation}

for every $S\in W^{2,\infty}(\mathbb{R})$ such that $S'$ has compact support and for every test function $\vp\in L^p(0,T;W_0^{1,p}(\Omega))\cap  L^{\infty}(Q_T)$ such that $\vp_t\in L^{p'}(Q_T)$ and $\vp(T,x)=0$.
\end{definition}
\begin{remark}
Note that the main difference between Definitions \ref{defrin} and \ref{defrin2} relies in the condition \eqref{sr32}. Indeed, the setting considered in Section \ref{secrin} ensures that a solution in the sense of Section \ref{defrin} enjoys
\begin{equation*}
(1+|u|)^{\beta-1}u\in L^p(0,T;W^{1,p}_0(\Omega)), \quad  \beta=\frac{\sigma+p-2}{p}<1,
\end{equation*}
which implies \eqref{sr32}. 
Roughly speaking, \eqref{sr32} regards the behaviour for "large" values of $u$ (i.e., the case $|u|=\infty$ which is excluded by the truncated function) and it is a standard request in the renormalized framework. \\
For further comments on the notion of renormalized solution we mention \cite{LM,BGDM,DMOP} for the stationary setting and \cite{BlMu,BlP}  for what concerns the evolution framework.
\end{remark}

We will need different spaces from the Lebesgue and the Sobolev's ones we have used so far. In particular, we will use the \emph{Marcinkievicz} space of $\gamma$ order $M^\gamma(Q_T)$. So, let us recall the definition and a few properties of this space. \\
Let $0<\gamma<\infty$. Then $M^{\gamma}(Q_T)$ is defined as the set of measurable functions $f:Q_T\to \mathbb{R}$ such that 
\[
\meas\{(t,x)\in Q_T:\,\,|f|>k \}\le ck^{-\gamma}
\]
and it is equipped with the norm
\[
\|f\|_{M^{\gamma}(Q_T)}=
\sup_{k>0}\left\{k^\gamma \meas\{(t,x)\in Q_T:\,\,|f|>k \}\right\}^{\frac{1}{\gamma}}.
\] 
Moreover, the following embeddings hold
\[
L^\gamma(Q_T)\hookrightarrow M^{\gamma}(Q_T)\hookrightarrow L^{\gamma-\omega}(Q_T)
\]
for every $\omega\in (0,\gamma-1]$, $\gamma> 1$.
\subsection{The a priori estimate and convergence results}

\begin{theorem}\label{sapp3}
Let $\frac{2N}{N+1}<p<N$ and assume \eqref{A1}, \eqref{A2}, \eqref{H} with $\frac{p(N+1)-N}{N+2}<q< p-\frac{N}{N+1}$, \eqref{F2}, \eqref{ID2} and let $\{u_n\}_n$ be a sequence of solutions of \eqref{Pn}. Then $\{ u_n \}_n$ is uniformly bounded in $L^\infty(0,T;L^1(\Omega))$ and $\{|\N u_n|^q\}_n$ is uniformly bounded in $L^1(Q_T)$.
%
%
\end{theorem}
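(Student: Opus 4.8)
The plan is to reduce both assertions to a single uniform bound on the $L^1(Q_T)$ norm of the gradient source and to close a self-improving inequality for it. First I would perform the change of variable $w_n=e^{-t}u_n$ used throughout the paper, so that \eqref{Pn} acquires an extra nonnegative zero-order term while \eqref{A1}, \eqref{A2} and \eqref{H} remain valid with constants $\bar\alpha,\bar\lambda,\bar\gamma$ depending on $T$. Since $|u_n|\le e^{T}|w_n|$ and $|\nabla u_n|\le e^{T}|\nabla w_n|$ pointwise, every bound proved for $w_n$ transfers to $u_n$, so it suffices to work with $w_n$. I would treat the whole right-hand side as a source $g_n:=\bar\gamma|\nabla w_n|^q+|f|\in L^1(Q_T)$ and set $X_n:=\||\nabla w_n|^q\|_{L^1(Q_T)}$ and $C_0:=\bar\gamma X_n+\|f\|_{L^1(Q_T)}+\|u_{0,n}\|_{L^1(\Omega)}$; both conclusions follow once $C_0$ is bounded uniformly in $n$.

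The basic step is the truncation estimate. Taking $T_k(w_n)$ as test function on $Q_t$ and using \eqref{A1} together with the sign of the zero-order term, I obtain
\begin{equation*}
\sup_{t\in[0,T]}\int_\Omega \Theta_k(w_n(t))\,dx+\bar\alpha\iint_{Q_T}|\nabla T_k(w_n)|^p\,dx\,ds\le k\,C_0,
\end{equation*}
where $\Theta_k(s)=\int_0^s T_k(z)\,dz$. Since $\Theta_k(s)\ge\frac{k}{2}|s|$ on $\{|s|>k\}$, the choice $k=1$ gives at once $\|w_n\|_{L^\infty(0,T;L^1(\Omega))}\le |\Omega|+2C_0$, which is the first assertion provided $C_0$ is controlled. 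From the same inequality, using $\Theta_k(s)\ge\frac12|T_k(s)|^2$, I also read off $\|T_k(w_n)\|_{L^\infty(0,T;L^2(\Omega))}^2\le 2kC_0$ and $\|\nabla T_k(w_n)\|_{L^p(Q_T)}^p\le kC_0/\bar\alpha$ for every $k>0$.

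Next I would run the Boccardo--Gallou\"et level-set argument to reach the Marcinkiewicz spaces recalled in this Section. Interpolating the two previous bounds through the parabolic Gagliardo--Nirenberg inequality (Theorem \ref{teoGN}) gives $T_k(w_n)\in L^{p\frac{N+2}{N}}(Q_T)$ with an explicit dependence on $kC_0$; estimating $\meas\{|w_n|>k\}$ on the set where $|T_k(w_n)|=k$ yields $w_n\in M^{\frac{p(N+1)-N}{N}}(Q_T)$, and then splitting $\{|\nabla w_n|>\lambda\}$ according to $\{|w_n|\le k\}\cup\{|w_n|>k\}$ and optimizing in $k$ produces
\begin{equation*}
|\nabla w_n|\in M^{\bar q}(Q_T),\qquad \bar q:=p-\frac{N}{N+1}=\frac{p(N+1)-N}{N+1},
\end{equation*}
with $\|\nabla w_n\|_{M^{\bar q}(Q_T)}^{\bar q}\le cC_0^{\frac{N+2}{N+1}}$. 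The assumption $q<p-\frac{N}{N+1}=\bar q$ places $q$ strictly below this Marcinkiewicz exponent, so the embedding $M^{\bar q}(Q_T)\hookrightarrow L^{q}(Q_T)$ on the finite-measure set $Q_T$ gives the second assertion as $X_n=\||\nabla w_n|^q\|_{L^1(Q_T)}\le c\|\nabla w_n\|_{M^{\bar q}(Q_T)}^{q}\le cC_0^{\kappa}$ with $\kappa=\frac{q(N+2)}{p(N+1)-N}$.

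The main obstacle is closing the loop, since $C_0$ itself contains $\bar\gamma X_n$: the previous line is the self-referential inequality $X_n\le c(\bar\gamma X_n+D)^{\kappa}$ with $D=\|f\|_{L^1}+\|u_{0,n}\|_{L^1}$, and the superlinearity threshold $q>\frac{p(N+1)-N}{N+2}$ forces $\kappa>1$. A superlinear inequality of this type does not close by itself; one must confine $X_n$ to its lower branch. I would do this exactly as in Theorems \ref{sapp} and \ref{sapprin}, by performing the estimate on the tail $G_{k_0}(w_n)$ rather than on $w_n$, choosing $k_0$ so large that, by the equi-integrability of $\{u_{0,n}\}$ (which converges in $L^1(\Omega)$) and of the fixed datum $f\in L^1(Q_T)$, the tails of $u_0$ and $f$ beyond level $k_0$ are small, and introducing the stopping time $T^*=\sup\{s:\ \|G_{k_0}(w_n(t))\|_{L^1(\Omega)}\le\delta_0\ \text{for all}\ t\le s\}$. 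On $[0,T^*]$ the smallness of $\delta_0$ renders the coefficient of the superlinear gradient contribution absorbable into $\bar\alpha\,\|\nabla(\cdot)\|_{L^p}^p$; the continuity $w_n\in C([0,T];L^1(\Omega))$ from \cite{P} then rules out $T^*<T$, so the bound holds on all of $[0,T]$. Adding the elementary estimate for the bounded part $T_{k_0}(w_n)$, where crude Young bounds suffice, closes $C_0$ uniformly in $n$, and undoing the change of variable delivers both conclusions for $u_n$.
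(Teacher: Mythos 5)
Your preparatory steps coincide with the paper's own proof: the change of variable, the $L^\infty(0,T;L^1(\Omega))$ bound and the truncation energies obtained from bounded test functions, the Marcinkiewicz estimate of Lemma \ref{MGN}, and the embedding of $M^{p-\frac{N}{N+1}}(Q_T)$ into $L^q(Q_T)$ (legitimate precisely because $q<p-\frac{N}{N+1}$), ending with the self-referential inequality $X_n\le c(\bar\gamma X_n+D)^{\kappa}$, $\kappa=\frac{q(N+2)}{p(N+1)-N}>1$. The gap is in how you close it. You assert that the mechanism of Theorems \ref{sapp} and \ref{sapprin} (stopping time in $t$ plus absorption of the superlinear contribution into $\bar\alpha\|\N(\cdot)\|_{L^p}^p$) carries over; it does not. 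In those theorems the test function $|G_k(w_n)|^{\si-2}G_k(w_n)$, with $\si>1$, places the full energy $\int_0^t\|\N[|G_k(w_n)|^\beta]\|_{L^p(\Omega)}^p\,ds$ on the left, and Gagliardo--Nirenberg reproduces \emph{that same energy} on the right multiplied by a positive power of $\|G_k(w_n)\|_{L^\infty(0,t;L^\si(\Omega))}$, so keeping the sup-norm below $\delta_0$ gives absorption. Here $\si=1$: the admissible test functions are bounded (sign-type or $T_j$), the left-hand side only ever contains the truncation energies $\iint|\N T_j(G_{k_0}(w_n))|^p$, and the quantity you must control, $\iint|\N G_{k_0}(w_n)|^q$, enters only through Lemma \ref{MGN} with a \emph{fixed} constant: in $X_n\le c(\bar\gamma X_n+D)^{\kappa}$ the constant $c$ carries no power of $\|G_{k_0}(w_n)\|_{L^\infty(0,t;L^1(\Omega))}$, so making $\delta_0$ small changes nothing, and nothing prevents $X_n$ from sitting on the upper branch $[Z_2,\infty)$ throughout $[0,T^*]$. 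Relatedly, to rule out $T^*<T$ you would need the strict smallness $\bar\gamma\iint_{Q_{T^*}}|\N G_{k_0}(w_n)|^q<\delta_0/2$, i.e.\ you would need to already know you are on the lower branch: the argument is circular.

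What the paper does instead is a continuity argument in the truncation level $k$, not in time. Setting $Y_{n,k}=\||\N G_k(w_n)|^{\frac{p(N+1)-N}{N+2}}\|_{L^{q\frac{N+2}{p(N+1)-N}}(Q_T)}$ and $h_k=\|f\chi_{\{|f|>k\}}\|_{L^1(Q_T)}+\|G_k(u_0)\|_{L^1(\Omega)}$, the estimate reads $F(Y_{n,k})\le c_2h_k$ with $F(Y)=Y-c_1Y^{\frac{q(N+2)}{p(N+1)-N}}$. Choosing $k^*$ so that $c_2h_k<F^*=F(Z^*)$ for all $k\ge k^*$ (equi-integrability of the data), the inequality forces the alternative $Y_{n,k}\le Z_1$ or $Y_{n,k}\ge Z_2$; since for each fixed $n$ the map $k\mapsto Y_{n,k}$ is continuous and tends to $0$ as $k\to\infty$, it must remain on the lower branch for every $k\ge k^*$. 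This yields $\iint_{Q_T}|\N G_{k^*}(w_n)|^q\le c$ uniformly in $n$, and the bounded part $T_{k^*}(w_n)$ is then handled, as in your last step, by testing with $T_{k^*}(u_n)$ and Young's inequality. If you insist on a smallness/absorption argument, you would first have to refine Lemma \ref{MGN} so as to track $\|v\|_{L^\infty(0,T;L^1(\Omega))}$ and $\sup_{j>0}j^{-1}\iint_{Q_T}|\N T_j(v)|^p$ by \emph{separate} constants, so that a power of $\delta_0$ genuinely appears in front of the self-referential term; with the single constant $M$ of the lemma (your single $C_0$) that dependence is lost, and the absorption you invoke is simply not available.
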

\begin{proof}
We first prove that we are in the framework of Lemma \ref{MGN}.\\
With this purpose, we take $\left(1-\frac{1}{(1+|G_k(w_n)|)^{\delta}} \right)\text{sign}(u_n)$, $\delta>1$ arbitrary large, as test function. Thus, dropping the gradient term in the l.h.s., we have
\begin{equation*}
\begin{array}{c}
\ds\integrale |G_k(w_n(t))|\,dx-\frac{1}{\delta-1}\integrale\left( 1- \frac{1}{(1+|G_k(w_n(t))|)^{\delta-1}}\right)\,dx\\
[3mm]\ds
\le \integrale |G_k(u_0)|\,dx-\frac{1}{\delta-1}\integrale\left( 1- \frac{1}{(1+|G_k(u_0)|)^{\delta-1}}\right)\,dx
+\gamma\iint_{Q_T}|\N G_k(w_n)|^q \,dx\,dt
 +\iint_{Q_T}|f|\chi_{\{|f|>k\}}\,dx\,dt.
\end{array}
\end{equation*}
Letting $\delta\to \infty$, we obtain
\begin{align*}
\integrale |G_k(w_n(t))&|\,dx
\le \integrale |G_k(u_0)|\,dx+\gamma\iint_{Q_T}|\N G_k(w_n)|^q \,dx\,dt
 +\iint_{Q_T}|f|\chi_{\{|f|>k\}}\,dx\,dt.
\end{align*}
This means that $G_k(w_n)$ satisfies an inequality of the type $\displaystyle\|G_k(w_n)\|_{L^\infty(0,T;L^1(\Omega))}\le M$ where
\[
M=\gamma\iint_{Q_T}|\N G_k(w_n)|^q \,dx\,dt
 +\iint_{Q_T}|f|\chi_{\{|f|>k\}}\,dx\,dt+\integrale |G_k(u_0)|\,dx.
\]

If, instead, we take $T_j(G_k(w_n))$ as test function then we can estimate as below:
\begin{equation}\label{disz}
\begin{array}{c}
\ds
\al \iint_{Q_T} |\N T_j(G_k(w_n))|^p\,dx\,dt\\
[3mm]
\ds
\le j\biggl[\gamma \iint_{Q_T}|\N G_k(w_n)|^q \,dx\,dt
+\iint_{Q_T}|f|\chi_{\{|f|>k\}}\,dx\,dt+\integrale |G_k(u_0)|\,dx\biggr]
\end{array}
\end{equation}
and, in particular, we deduce that $\ds\al\iint_{Q_T} |\N T_j(G_k(w_n))|^p\,dx\,ds\le jM$.

Thus we can apply Lemma \ref{MGN} with $v=G_k(w_n)$ obtaining
\[
\begin{array}{c}
\ds
\||\N G_k(w_n)|^{\frac{p(N+1)-N}{N+2}}\|_{M^{\frac{N+2}{N+1}}(Q_T)}\\
[3mm]\ds
\le c\left[ \||\N G_k(w_n)|^{\frac{p(N+1)-N}{N+2}}\|_{L^{q\frac{N+2}{p(N+1)-N}}(Q_T)}^{q\frac{N+2}{p(N+1)-N}} +\|f|\chi_{\{|f|>k\}}\|_{L^1(Q_T)}+\|G_k(u_0)\|_{L^1(\Omega)}  \right]. 
\end{array}
\]
Our current assumptions ensure that $\displaystyle q\frac{N+2}{p(N+1)-N}<\frac{N+2}{N+1}$ and so we have that the embedding  $\displaystyle M^{\frac{N+2}{N+1}}(Q_T)\subset L^{q\frac{N+2}{p(N+1)-N}}(Q_T)$ holds. We thus go further estimating from below as follows: 
\begin{equation*}
\begin{array}{c}
\ds
\||\N G_k(w_n)|^{\frac{p(N+1)-N}{N+2}}\|_{L^{q\frac{N+2}{p(N+1)-N}}(Q_T)}\\[3mm]\ds
\le c\biggl[ \||\N G_k(w_n)|^{\frac{p(N+1)-N}{N+2}}\|_{L^{q\frac{N+2}{p(N+1)-N}}(Q_T)}^{q\frac{N+2}{p(N+1)-N}} +\|f|\chi_{\{|f|>k\}}\|_{L^1(Q_T)}+\|G_k(u_0)\|_{L^1(\Omega)}  \biggr].
\end{array}
\end{equation*}
Now, set $Y_{n,k}=\||\N G_k(w_n)|^{\frac{p(N+1)-N}{N+2}}\|_{L^{q\frac{N+2}{p(N+1)-N}}(Q_T)}$ and $h_k=\|f\chi_{\{|f|>k\}}\|_{L^1(Q_T)}+\|G_k(u_0)\|_{L^1(\Omega)}$ so we rewrite
\[
Y_{n,k}-c_1 Y_{n,k}^{\frac{q(N+2)}{p(N+1)-N}}\le c_2h_k.
\]

We thus can reason as in \cite{GMP}, otherwise, we define the function
\[
F(Y)=Y-c_1Y^{\frac{q(N+2)}{p(N+1)-N}}
\]
which has a unique maximizer $Z^*=\left(  \frac{p(N+1)-N}{c_1q(N+2)}  \right)^{\frac{p(N+1)-N}{(N+1)(q-p+1)+q-1}}$. In particular, $F(Z^*)=F^*=F^*(p,N,q,\al,\gamma)$.
Coming back to the inequality
\[
F(Y_{n,k})\le c_2h_k
\]
we observe that it is not trivial only if $c_2h_k<F^*$. Hence, taking in mind the definition of $h_k$, we define
\[
k^*=\inf\{k>0:\,\, c_2h_k<F^* \}.
\]
Such a value of $k^*$ ensures that, being $h_k$ non increasing in $k$, we have that $c_2h_k<F^*$ for every $k\ge k^*$. We now consider the equation $F(Y_{n,k})=c_2h_k$ and observe that it admits two roots, say $Z_1$ and $Z_2$, which satisfy $0\le Z_1<Z^*<Z_2$. Thus the inequality $F(Y_{n,k})\le c_2h_k$ implies that either $Y_{n,k}\le Z_1$ or $Y_{n,k}\ge Z_2$. Since the continuity of the function $k\to Y_{n,k}$ and the convergence to zero of $Y_{n,k}$ for $k\to \infty$ imply that $Y_{n,k}\le Z_1$ for all $k\ge k^*$  we can say that
\begin{equation}\label{kstar}
\iint_{Q_T} |\N G_k(w_n)|^q\,dx\,dt\le c\qquad\forall  k\ge k^*.
\end{equation}
Finally, being 
\[
\iint_{Q_T}|\N u_n|^q\,dx\,dt\le c\left[\iint_{Q_T} |\N G_{k^*}(w_n)|^q\,dx\,dt+\iint_{Q_T} |\N T_{k^*}(w_n)|^q\,dx\,dt\right]
\]
we need an estimate on the last integral in order to prove that $\{|\N u_n|^q\}_n$ is uniformly bounded in $L^1(Q_T)$.
We take $T_{k^*}(u_n)$ as test function, obtaining
\begin{equation*}
\al \iint_{Q_T} |\N T_{k^*}(u_n)|^p\,dx\,dt
\le k^*\biggl[\gamma \iint_{Q_T}|\N u_n|^q \,dx\,dt
+\iint_{Q_T}|f|\,dx\,dt+\integrale |u_0|\,dx\biggr].
\end{equation*}
from which
\begin{equation*}
\begin{array}{c}
\ds\iint_{Q_T} |\N T_{k^*}(w_n)|^q\,dx\,dt\le c\left[k^*\biggl( \iint_{Q_T}|\N u_n|^q \,dx\,dt
+\iint_{Q_T}|f|\,dx\,dt+\integrale |u_0|\,dx\biggr)\right]^{\frac{q}{p}}\\
[3mm]\ds\le c\left(k^*\iint_{Q_T} |\N T_{k^*}(w_n)|^q\,dx\,dt \right)^{\frac{q}{p}}+c
\end{array}
\end{equation*}
thanks to \eqref{kstar}. Young's inequality allows us to conclude saying that
\[
\iint_{Q_T} |\N T_{k^*}(w_n)|^q\,dx\,dt\le c
\]
where $c$ depends on $k^*$, above the parameters of the problem.
\end{proof}

\begin{proposition}\label{abc3}
Let $\frac{2N}{N+1}<p<N$ and assume \eqref{A1}, \eqref{A2}, \eqref{A3}, \eqref{H} with $\frac{p(N+1)-N}{N+2}<q< p-\frac{N}{N+1}$, \eqref{F2}, \eqref{ID2} and let $\{u_n\}_n$ be a sequence of solutions of \eqref{Pn}. Then, for some function $u$,
\begin{equation}\label{abis}
 u_n\to u\quad\text{a.e. in } Q_T,
\end{equation}
\begin{equation}\label{a}
\N u_n\to\N u\quad\text{a.e. in } Q_T,
\end{equation}
\begin{equation}\label{b}
H_n(t,x,\nabla u_n)\to H(t,x,\nabla u)\quad\text{ in  }L^1(Q_T),
\end{equation}
\begin{equation}\label{c}
T_k(u_n)\to T_k(u)\quad\text{ in  }L^p(0,T;W_0^{1,p}(\Omega))\,\,\forall  k>0.
\end{equation}
\end{proposition}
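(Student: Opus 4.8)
The plan is to obtain the four convergences by repeating almost verbatim the scheme used for Proposition \ref{abc}, the only genuinely new ingredient being a way to gain a little integrability of the gradient above the exponent $q$. From Theorem \ref{sapp3} I already have that $\{u_n\}_n$ is bounded in $L^\infty(0,T;L^1(\Omega))$ and that $\{|\N u_n|^q\}_n$ is bounded in $L^1(Q_T)$; however, a bare $L^1$ bound on $|\N u_n|^q$ is borderline and does not suffice to run a Vitali argument on the right-hand side. The first step is therefore to upgrade it. The Marcinkiewicz estimate produced inside the proof of Theorem \ref{sapp3} shows that $|\N G_k(w_n)|$ is bounded, uniformly in $n$, in $M^{p-\frac{N}{N+1}}(Q_T)$; combining this with the truncation bound for $T_{k^*}(w_n)$, with the embedding $M^{\gamma}(Q_T)\hookrightarrow L^{\gamma-\omega}(Q_T)$, and with the comparability of $w_n$ and $u_n$ on the finite interval $(0,T)$, I would deduce that $\{|\N u_n|^\eta\}_n$ is bounded in $L^1(Q_T)$ for some $\eta$ with $q<\eta<p-\frac{N}{N+1}$.

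Next I would prove the a.e. convergence \eqref{abis}. When $p>2-\frac{1}{N+1}$ the exponent $\eta$ above may be chosen strictly larger than $1$, so $\{u_n\}_n$ is bounded in $L^\eta(0,T;W^{1,\eta}_0(\Omega))$; since the right-hand side is bounded in $L^1(Q_T)$ and the fluxes $a(t,x,u_n,\N u_n)$ are bounded in a Lebesgue space by \eqref{A2}, the time derivative $(u_n)_t$ is bounded in $L^1(0,T;W^{-1,s'}(\Omega))$ for $s>N$, and \cite[Corollary $4$]{S} yields compactness in $L^\eta(Q_T)$, hence a subsequence with $u_n\to u$ a.e. In the remaining range $\frac{2N}{N+1}<p\le 2-\frac{1}{N+1}$, where $\eta$ need not exceed $1$, I would instead apply the compactness result to a regularization of the truncations $T_k(u_n)$, which are bounded in $L^p(0,T;W^{1,p}_0(\Omega))$ by \eqref{disz}, and recover the a.e. convergence of $u_n$ itself through the diagonal argument of \cite[Theorem $2.1$]{P}, the uniform $L^\infty(0,T;L^1(\Omega))$ bound controlling the level sets $\{|u_n|>k\}$.

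With $u_n\to u$ a.e. and the right-hand side bounded in $L^1(Q_T)$, the a.e. convergence \eqref{a} of the gradients follows from \cite{BDGO} exactly as in Proposition \ref{abc}. The strong convergence \eqref{b} is then a consequence of the Vitali Theorem: the a.e. convergence of $H_n(t,x,\N u_n)$ to $H(t,x,\N u)$ is clear, while the equi-integrability is precisely where the gain $\eta>q$ is spent, since $|H_n(t,x,\N u_n)|\le \gamma|\N u_n|^q+|f|$ with $\{|\N u_n|^q\}_n$ bounded in $L^{\eta/q}(Q_T)$, $\eta/q>1$, hence equi-integrable, and $f\in L^1(Q_T)$. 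Finally, the $L^1(Q_T)$ convergence of the right-hand side lets one reproduce the monotonicity argument of \cite{BlMu,P} and obtain the strong convergence \eqref{c} of the truncations.

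I expect the main obstacle to be that very first step. In the $L^1$-data regime the a priori estimate only delivers $|\N u_n|^q\in L^1(Q_T)$, which is exactly borderline and by itself gives neither the Sobolev compactness needed for \eqref{abis} nor the equi-integrability needed for \eqref{b}; everything rests on extracting the strictly better integrability $\eta>q$ from the Marcinkiewicz bound, and the assumption $q<p-\frac{N}{N+1}$ is what leaves precisely the room $M^{p-\frac{N}{N+1}}(Q_T)\hookrightarrow L^{\eta}(Q_T)$ with $\eta>q$. A secondary technical difficulty is the passage to the a.e. convergence of $u_n$ when $p$ is small, which forces one to work with the truncation-based compactness of \cite{P} rather than applying \cite{S} directly to the sequence $\{u_n\}_n$.
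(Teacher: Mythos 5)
Your proposal is correct and follows essentially the same route as the paper's proof: both rest on the uniform Marcinkiewicz bound $|\N u_n|\in M^{p-\frac{N}{N+1}}(Q_T)$ (obtained from the $L^1$ bound on the right-hand side via the estimates of Theorem \ref{sapp3} and the classical results of \cite{BG,BDGO}), compactness as in Proposition \ref{abc} for the a.e. convergence of $u_n$, \cite{BDGO} for the a.e. convergence of the gradients, the strict inequality $q<p-\frac{N}{N+1}$ to gain equi-integrability and apply Vitali, and \cite{BlMu,P} for the strong convergence of the truncations. The only cosmetic difference is that you extract the intermediate $L^\eta$ bound with $q<\eta<p-\frac{N}{N+1}$ explicitly, whereas the paper invokes the Marcinkiewicz bound directly for the equi-integrability step.
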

\begin{proof}
So far, we know that the r.h.s. $\{H_n(t,x,\nabla u_n)\}_n$ is bounded in $L^1(Q_T)$. Classical estimates (see \cite{BG,BDGO}) allow us to get the a.e. convergence \eqref{abis} (see also Proposition \ref{abc}) and the boundedness of $\{|\N u_n|\}_n$ in $M^{\frac{p(N+1)-N}{N+1}}(Q_T)$. 

The a.e. convergence in \eqref{abis} and the boundedness of the r.h.s. in $L^1(Q_T)$ give us the a.e. convergence of the gradients \eqref{a} thanks to \cite{BDGO}.

Moreover, being $q<\frac{p(N+1)-N}{N+1}$, the boundedness of $\{|\N u_n|\}_n$ in $M^{\frac{p(N+1)-N}{N+1}}(Q_T)$ implies the equi-integrability of the r.h.s. and thus the strong convergence in $L^1(Q_T)$ of the r.h.s. through an application of the Vitali Theorem.

Finally, we deduce the strong convergence of the truncation \eqref{c} recalling \cite{BlMu,P}.
\end{proof}

\subsection{The existence result}
\begin{theorem}\label{Trin2}
Let $\frac{2N}{N+1}<p<N$ and assume \eqref{A1}, \eqref{A2}, \eqref{A3}, \eqref{H} with $\frac{p(N+1)-N}{N+2}<q< p-\frac{N}{N+1}$, \eqref{F2} and \eqref{ID2}. 
Then, there exists at least one renormalized solution of the problem \eqref{P} in the sense of Definition \ref{defrin2} satisfying
\begin{equation*}
u\in C([0,T];L^1(\Omega)).
\end{equation*}
\end{theorem}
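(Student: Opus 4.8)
\proof
The plan is to argue by approximation, following the scheme of Theorem~\ref{Trin} step by step; the only genuinely new point is the renormalization condition \eqref{sr32}. I first fix the sequence $\{u_n\}_n\subseteq L^p(0,T;W^{1,p}_0(\Omega))$ of solutions of \eqref{Pn}. By Theorem~\ref{sapp3}, $\{u_n\}_n$ is bounded in $L^\infty(0,T;L^1(\Omega))$ and $\{|\N u_n|^q\}_n$ is bounded in $L^1(Q_T)$, so the right-hand sides $\{H_n(t,x,\N u_n)\}_n$ are bounded in $L^1(Q_T)$. Proposition~\ref{abc3} then provides all the convergences needed for the passage to the limit: the a.e. convergences \eqref{abis} and \eqref{a}, the strong convergence \eqref{b} of $H_n(t,x,\N u_n)$ to $H(t,x,\N u)$ in $L^1(Q_T)$ (which in particular gives $H(t,x,\N u)\in L^1(Q_T)$), and the strong convergence \eqref{c} of the truncations $T_k(u_n)$ in $L^p(0,T;W_0^{1,p}(\Omega))$.

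Next I recover \eqref{sr22}. Testing \eqref{Pn} with $S'(u_n)\vp$ yields the approximate identity \eqref{fdn}, and since $\text{supp}(S')\subseteq[-M,M]$ every term involves only $T_M(u_n)$. I then pass to the limit term by term exactly as in Theorem~\ref{Trin}: the diffusion term converges by \eqref{A2} and the strong convergence \eqref{c} of $T_M(u_n)$ as in \eqref{convg1}; the term carrying $S''$ converges in $L^1(Q_T)$ because $S''$ is bounded and $a(t,x,T_M(u_n),\N T_M(u_n))\cdot\N T_M(u_n)$ converges strongly in $L^1(Q_T)$, as in \eqref{convg2}; and the reaction term converges by \eqref{b}, as in \eqref{convg3}. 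The time term is handled through $S(u_n)\to S(u)$ in $C([0,T];L^1(\Omega))$, a consequence of \cite[Theorem~$1$]{P}; this same convergence gives meaning to the initial datum and, applied with $S$ a smooth truncation and then letting the truncation level diverge, yields the continuity $u\in C([0,T];L^1(\Omega))$.

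The heart of the matter is the decay condition \eqref{sr32}, which is precisely what distinguishes Definition~\ref{defrin2} from Definition~\ref{defrin}. To prove it I would test \eqref{Pn} with $\psi_j(u_n):=\frac{1}{j}T_j(G_j(u_n))$, where $|\psi_j|\le 1$, with $\psi_j(s)=0$ for $|s|\le j$ and $\psi_j'(s)=\frac{1}{j}\chi_{\{j<|s|<2j\}}$. Dropping the nonnegative time contribution and using the coercivity \eqref{A1} (so that $a\cdot\N u_n\ge 0$), I obtain
\[
\frac{1}{j}\iint_{\{j<|u_n|<2j\}}a(t,x,u_n,\N u_n)\cdot\N u_n\,dx\,dt\le \iint_{\{|u_n|>j\}}|H_n(t,x,\N u_n)|\,dx\,dt+\int_{\{|u_{0,n}|>j\}}|u_{0,n}|\,dx.
\]
On the right, the first term tends to $0$ as $j\to\infty$ uniformly in $n$: the equi-integrability of $\{H_n(t,x,\N u_n)\}_n$ in $L^1(Q_T)$ established in Proposition~\ref{abc3}, combined with $\meas\{|u_n|>j\}\le \frac{c}{j}\to 0$ uniformly in $n$ (Chebyshev and the bound in $L^\infty(0,T;L^1(\Omega))$), forces it to vanish; the second term vanishes because $u_{0,n}\to u_0$ in $L^1(\Omega)$ is equi-integrable. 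Finally, the a.e. convergence \eqref{a} makes $a(t,x,u_n,\N u_n)\cdot\N u_n\,\chi_{\{j<|u_n|<2j\}}$ converge a.e. to $a(t,x,u,\N u)\cdot\N u\,\chi_{\{j<|u|<2j\}}$ for a.e. level $j$, so Fatou's lemma transfers the bound to the limit and \eqref{sr32} follows on letting $j\to\infty$. The delicate point I expect to require the most care is keeping this estimate uniform in $n$ while the level $j$ escapes to infinity, i.e. controlling the double limit so that the approximate energy concentrated on the slab $\{j<|u_n|<2j\}$ can legitimately be passed to $u$.
\endproof
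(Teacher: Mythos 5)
Your proof is correct and follows the same approximation skeleton as the paper: a priori estimates from Theorem \ref{sapp3}, the convergences \eqref{abis}, \eqref{a}, \eqref{b}, \eqref{c} of Proposition \ref{abc3}, passage to the limit in the analogue of \eqref{fdn} exactly as in Theorem \ref{Trin}, and the trace result of \cite{P} for the continuity $u\in C([0,T];L^1(\Omega))$. Where you genuinely diverge is the energy decay condition \eqref{sr32}: the paper does not prove it directly but simply invokes \cite[Theorem $2$]{Bl} (see also \cite[Lemma $3.2$ and Remark $2.4$]{BlMu}), whereas you give a self-contained argument, testing \eqref{Pn} with $\frac{1}{j}T_j(G_j(u_n))$, controlling the right-hand side uniformly in $n$ via the equi-integrability of $\{H_n(t,x,\N u_n)\}_n$ (from the Marcinkievicz bound of Proposition \ref{abc3}) together with the Chebyshev estimate $\meas\{|u_n|>j\}\le c/j$ coming from the $L^\infty(0,T;L^1(\Omega))$ bound, and then transferring the estimate to $u$ by Fatou. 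This buys transparency: it exhibits exactly which ingredients (coercivity \eqref{A1}, equi-integrability of the r.h.s. and of $u_0$, the $L^\infty(0,T;L^1)$ bound) make the renormalization condition work, rather than outsourcing it to the quoted lemmas.

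One technical point you should make explicit. Fatou applied to $a(t,x,u_n,\N u_n)\cdot\N u_n\,\chi_{\{j<|u_n|<2j\}}$ yields the bound $\frac{1}{j}\iint_{\{j<|u|<2j\}}a(t,x,u,\N u)\cdot\N u\le \omega(j)$ only for those levels $j$ with $\meas\{|u|=j\}=\meas\{|u|=2j\}=0$, i.e. for all but countably many $j$, while \eqref{sr32} is stated for every integer level. This is repaired by a standard covering step: a possibly bad slab $\{n\le|u|\le 2n\}$ is contained in the union of two good slabs $\{j<|u|<2j\}\cup\{j'<|u|<2j'\}$ with $n/2<j<n<j'<2j$, and since your $\omega(\cdot)$ is uniform in $n$ and vanishes at infinity, the conclusion follows. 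This is precisely the double-limit delicacy you flagged at the end; with that sentence added, your argument is complete.
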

\proof
The renormalized formulation \eqref{sr22} can be proved reasoning as in Theorem \ref{Trin}; the continuity regularity $C([0,T];L^1(\Omega))$ can be deduced recalling the trace result \cite{P} as well.\\
Finally, the energy growth conditions \eqref{sr32} is a consequence of the proof of \cite[Theorem $2$]{Bl} (see also \cite[Lemma $3.2$ and Remark $2.4$]{BlMu}).
\endproof

\section{On the sublinear problem}\label{secsub}
\setcounter{equation}{0}

We are going to briefly discuss what we called the sublinear case. As we have already mentioned, this case was previously analysed in \cite{Po,DNFG}. More precisely, the authors take into account a parabolic problem of Cauchy-Dirichlet type with lower order terms which grow as a power of the gradient $\ds|\N u|^q$ for $\ds q\le \frac{N(p-1)+p}{N+2}$. We have already pointed out that such a threshold is not sharp for $1<p<2$ since the borderline for the superlinear growth becomes $q=\frac{p}{2}$. We refer to Section \ref{subsecsharp} for further details on the argument presented to justify this assertion.

So, let us our parabolic Cauchy-Dirichlet problem:
\begin{equation}\label{Psub}\tag{$P_{\text{sub}}$}
\begin{cases}
\begin{array}{ll}
u_t-\dive  a(t,x,u,\nabla u) =H(t,x,\nabla u) & \text{in}\,\,Q_T,\\
 u=0  &\text{on}\,\,(0,T)\times \partial \Omega,\\
 u(0,x)=u_0(x) &\text{in}\,\, \Omega,
\end{array}
\end{cases}
\end{equation}
where $p$ is assumed to be $1<p<2$. The hypotheses \eqref{Psub} are listed below:
\begin{itemize}
\item the Leray-Lions structure conditions \eqref{A1}, \eqref{A2}, \eqref{A3} hold;
\item the function $H(t,x,\xi):(0,T)\times \Omega\times \mathbb{R}^N\to \mathbb{R}$ grows at most as a power of the gradient plus a forcing term, namely
\begin{equation}\label{Hsub}\tag{$H_{\text{sub}}$}
\ds
\begin{array}{c}
\exists \,\,\gamma\,\,\text{s.t. }\,\,
|H(t,x,\xi)|\le \gamma |\xi|^q+f(t,x)\quad
 \text{with}\quad0<q\le\frac{p}{2},
\end{array}
\end{equation}
a.e. $(t,x)\in Q_T$, for all $\xi\in\mathbb{R}^N$ and with $f=f(t,x)$ is some Lebesgue space.
\end{itemize}
As far as the data are concerned, we assume that the initial datum verifies
\begin{equation}\label{id1}\tag{$ID1_{sub}$}
u_0\in L^m(\Omega)\quad
 \text{with}\quad m> 1 \quad \text{if}\quad 0<q\le \frac{p}{2}
\end{equation}
and 
\begin{equation}\label{id2}\tag{$ID2_{sub}$}
u_0\in L^m(\Omega)\quad
 \text{with}\quad m\ge 1\quad \text{if}\quad 0<q< \frac{p}{2}
\end{equation}
and the forcing term satisfies
\begin{equation}\label{Fsub}\tag{$F_{\text{sub}}$}
f\in L^1(0,T;L^m(\Omega)).
\end{equation}
We note here that, if $m>1$, we could only deal with the linear case $q=\frac{p}{2}$ since, by Young's inequality, $|\N u|^q\le |\N u|^{\frac{p}{2}}+c$ when $q<\frac{p}{2}$. However, we will separate the growths $q<\frac{p}{2}$ and $q=\frac{p}{2}$ in order to stress the features of the \emph{sublinear} and \emph{linear} settings.

\subsection{The a priori estimate}
\begin{theorem}\label{sappsub}
Assume $1<p<2$, \eqref{A1}, \eqref{A2}, \eqref{Hsub}, either \eqref{id1} or \eqref{id2}, \eqref{Fsub} and let $\{u_n\}_n$ be a sequence of solutions of \eqref{Pn}. Then, $\{u_n\}_n$ is uniformly bounded in   $L^\infty(0,T;L^{m}(\Omega))$.
Moreover, we have that:
\begin{itemize}
\item if $m>1$, then $\{(1+|u_n|)^{\mu-1}u_n\}_n$ is uniformly bounded in $L^p(0,T;W^{1,p}_0(\Omega))$ and the following estimate holds:
\begin{equation}\label{disappsub1}
\sup_{t\in [0,T]}\|u_n(t)\|_{L^{m}(\Omega)}^{m}+\|\nabla((1+|u_n|)^{\mu-1}u_n)\|_{L^p(Q_T)}^p\le M,\qquad \mu=\frac{m+p-2}{p}.
\end{equation}
In particular, if $m\ge 2$, then $\{u_n\}_n$ is uniformly bounded in $L^p(0,T;W^{1,p}_0(\Omega))$.\\
The constant $M$ above depends on $\alpha$, $p$, $q$, $\gamma$, $N$, $|\Omega|$, $T$, $\|f\|_{L^1(0,T;L^m(\Omega))}$ and $\|u_0\|_{L^m(\Omega)}$.
\item If $m=1$ and $q<\frac{p}{2}$, then $\{u_n\}_n$ satisfies the estimates of Lemmas \ref{MGN} and \ref{MC}.
\end{itemize}
\end{theorem}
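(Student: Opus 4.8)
The plan is to mirror the a priori estimates of Theorem~\ref{sapp} and Theorem~\ref{sapp3}, exploiting the decisive simplification brought by the sublinear range $q\le\frac p2$: here the gradient term is absorbed by a \emph{plain} Young inequality with a fixed small constant, so that the continuity/stopping-time argument around $T^*$ which was indispensable in the superlinear Theorems~\ref{sapp}, \ref{sapprin} and \ref{sapp3} is no longer needed. As in those proofs I would first set $w_n=e^{-t}u_n$, which turns \eqref{Pn} into \eqref{wn} with a helpful zero order term and with \eqref{A1}, \eqref{A2} and \eqref{Hsub} holding with modified constants $\tilde\al,\tilde\lambda,\tilde\gamma$; recall that for finite $T$ the bounds obtained for $w_n$ transfer to $u_n$.

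For $m>1$ I would test \eqref{wn} directly with $|w_n|^{m-2}w_n$ (no truncation is needed, precisely because smallness in time is not required) and integrate over $Q_t$. Using \eqref{A1} and the identity $|\N(|w_n|^{\mu})|=\mu|w_n|^{\mu-1}|\N w_n|$ with $\mu=\frac{m+p-2}{p}$, the left-hand side controls $\frac1m\|w_n(t)\|_{L^m(\Omega)}^m$ together with a positive multiple of $\iint_{Q_t}|\N(|w_n|^{\mu})|^p$. The gradient contribution of the right-hand side becomes $\tilde\gamma\mu^{-q}\iint_{Q_t}|\N(|w_n|^{\mu})|^{q}\,|w_n|^{\,m-1-q(\mu-1)}$, to which I would apply Young's inequality with exponents $\left(\frac pq,\frac p{p-q}\right)$: the factor $\varepsilon|\N(|w_n|^{\mu})|^p$ is absorbed on the left, while the residual power of $|w_n|$ equals $m+\frac{2q-p}{p-q}$, which does not exceed $m$ exactly because $q\le\frac p2$ (with equality only in the linear case $q=\frac p2$), and so integrated in time over the bounded domain $\Omega$ it is controlled by $\int_0^t\|w_n(s)\|_{L^m(\Omega)}^m\,ds$. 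The forcing term is handled by H\"older in space with $(m,m')$, noting $(m-1)m'=m$, followed by $\iint_{Q_t}|f||w_n|^{m-1}\le\|f\|_{L^1(0,t;L^m(\Omega))}\sup_{s\le t}\|w_n(s)\|_{L^m(\Omega)}^{m-1}$, which uses only \eqref{Fsub}. Collecting terms and using one further Young inequality on this contribution yields an inequality of Gronwall type, $\sup_{s\le t}\|w_n(s)\|_{L^m(\Omega)}^m\le c\int_0^t\|w_n(s)\|_{L^m(\Omega)}^m\,ds+c$, whence the uniform $L^\infty(0,T;L^m(\Omega))$ bound follows; reinserting the absorbed term then gives the bound for $\{(1+|u_n|)^{\mu-1}u_n\}_n$ in $L^p(0,T;W^{1,p}_0(\Omega))$, i.e. \eqref{disappsub1}. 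When $m\ge2$ one has $\mu\ge1$, so that $|\N u_n|\le|\N((1+|u_n|)^{\mu-1}u_n)|$ and $\{u_n\}_n$ is bounded in $L^p(0,T;W^{1,p}_0(\Omega))$; alternatively this last assertion follows by testing with $w_n$ as in Part~$2$ of Theorem~\ref{sapp}.

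For $m=1$ and $q<\frac p2$ the power test function is unavailable, and I would instead run the $L^1$-data scheme of Theorem~\ref{sapp3}: testing \eqref{Pn} with $\left(1-(1+|G_k(w_n)|)^{-\delta}\right)\mathrm{sign}(w_n)$ and letting $\delta\to\infty$ gives the $L^\infty(0,T;L^1(\Omega))$ control, while testing with $T_j(G_k(w_n))$ produces $\al\iint_{Q_T}|\N T_j(G_k(w_n))|^p\le jM$. These are exactly the hypotheses of Lemma~\ref{MGN}, and combined with Lemma~\ref{MC} they deliver the announced Marcinkiewicz estimates for $\{u_n\}_n$; the strict sublinearity keeps the resulting gradient inequality from degenerating, so that the closing is immediate.

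The only genuinely delicate point is the exponent bookkeeping leading to the power $m+\frac{2q-p}{p-q}$ and the verification that it is at most $m$. Once this is secured, the sublinear structure makes the absorption automatic and the principal difficulty of the superlinear theorems --- propagating smallness uniformly in time through the stopping time $T^*$ --- simply disappears.
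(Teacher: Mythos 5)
Your proposal follows the paper's overall strategy (the change of variable $w_n=e^{-t}u_n$, power-type test functions, Young absorption exploiting $q\le\frac p2$, and for $m=1$ the $L^1$-data scheme of Theorem \ref{sapp3} followed by Lemmas \ref{MGN} and \ref{MC} with the sublinear closing), and your exponent bookkeeping is correct: your residual power $m+\frac{2q-p}{p-q}$ is exactly the paper's $m-1+\frac{q}{p-q}$. Where you genuinely depart from the paper is in how the residual zeroth-order term is closed: you run a Gronwall inequality, $\sup_{s\le t}\|w_n(s)\|_{L^m(\Omega)}^m\le c\int_0^t\|w_n(s)\|_{L^m(\Omega)}^m\,ds+c$, which treats $q<\frac p2$ and the linear case $q=\frac p2$ in one stroke. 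The paper instead absorbs with an arbitrarily small Young constant when $q<\frac p2$ (possible because the residual exponent is then strictly below $m$), and when $q=\frac p2$ it partitions $[0,T]$ into finitely many subintervals satisfying the smallness conditions \eqref{condtsub}, \eqref{condt}. Your Gronwall route is legitimate and arguably cleaner, at the price of an exponential-in-$T$ constant; both yield a constant $M$ depending on the data only through their norms, as the statement requires.

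There is, however, a genuine gap in the range $1<m<2$. The test function $|w_n|^{m-2}w_n$ you propose for all $m>1$ is not admissible there: its derivative $(m-1)|s|^{m-2}$ blows up at $s=0$, so it is not in $W^{1,\infty}(\mathbb{R})$ and the justification recalled in Appendix \ref{secpbn} does not apply; moreover the energy it would formally generate, $\iint_{Q_t}|\N w_n|^p|w_n|^{m-2}\,dx\,ds$, carries a singular weight (recall $\mu<1$ here) and cannot be identified with $\mu^{-p}\iint_{Q_t}|\N(|w_n|^{\mu})|^p\,dx\,ds$ without a regularization argument. This is precisely why the paper switches, for $1<m<2$, to the Lipschitz test function $\left[(1+|u_n|)^{m-1}-1\right]\mathrm{sign}(u_n)$, whose energy $\iint_{Q_t}|\N u_n|^p(1+|u_n|)^{m-2}\,dx\,ds$ matches exactly the quantity $\|\N((1+|u_n|)^{\mu-1}u_n)\|_{L^p(Q_T)}^p$ claimed in \eqref{disappsub1}, since $p(1-\mu)=2-m$; a regularized kernel as in Theorem \ref{sapprin} would also do, but some such device is indispensable. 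A smaller inaccuracy occurs for $m\ge2$: a bound on $\iint_{Q_T}|\N(|w_n|^{\mu})|^p\,dx\,ds$ alone does not control $\N((1+|u_n|)^{\mu-1}u_n)$ near the set $\{u_n=0\}$ when $\mu>1$, because there $|\N((1+|s|)^{\mu-1}s)|\sim|\N s|$ while $|\N(|s|^{\mu})|\sim\mu|s|^{\mu-1}|\N s|$ degenerates; the additional bound on $\N u_n$ obtained by testing with $w_n$ (Part $2$ of Theorem \ref{sapp}), which you list only as an alternative, is in fact a necessary complement and is how the paper completes \eqref{disappsub1}.
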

\begin{remark}
Note that the constant $M$ depends on the initial datum $u_0$ and on the forcing term $f$ through their norms an \emph{not} through an \emph{equi-integrability relation}. We recall that this fact is due to the sublinear behaviour of the r.h.s..  
\end{remark}
\begin{proof}
\textit{The case $m\ge 2$ and $q<\frac{p}{2}$.}\\
Having $m\ge2$ allows us to multiply the equation in \eqref{Pn} by $\vp(u_n)=|u_n|^{m-2}u_n$ so that an integration over $Q_t$ gives us
\begin{equation}\label{mq}
\begin{array}{c}
\ds
\frac{1}{m}\integrale |u_n(t)|^{m}\,dx+\frac{\al(m-1)}{\mu^p}\iint_{Q_t}|\N [|u_n|^{\mu}]|^p\,dx\,ds\\
[3mm]\ds
\le\underbrace{ \gamma\iint_{Q_t}|\N u_n|^q |u_n|^{m-1}\,dx\,ds}_A +\underbrace{\iint_{Q_t}|f\|u_n|^{m-1}\,dx\,ds}_B+\frac{1}{m}\integrale |u_0|^{m}\,dx.
\end{array}
\end{equation}
Dealing with the $A$ term turns out to be simpler than before, since the sublinear growth guarantees that we can proceed estimating by Young's inequality with indices $\left(\frac{p}{q},\frac{p}{p-q}\right)$ as below
\begin{align*}
A&=\frac{\gamma}{\mu^q}\iint_{Q_t}|\N [|u_n|^{\mu}]|^q|u_n|^{(m-1)\frac{p-q}{p}+\frac{q}{p}}\,dx\,ds\\
&\le \frac{\al(m-1)}{2\mu^p}\iint_{Q_t}|\N [|u_n|^{\mu}]|^p\,dx\,ds+c_1\iint_{Q_t}|u_n|^{m-1+\frac{q}{p-q}}\,dx\,ds.
\end{align*}
We point out that having $q<\frac{p}{2}$ implies that the exponent $m-1+\frac{q}{p-q}$ is strictly smaller than $m$ and this fact allows us to apply again Young's inequality to the last term, so that
\begin{align*}
A&\le \frac{\al(m-1)}{2\mu^p}\iint_{Q_t}|\N [|u_n|^{\mu}]|^p\,dx\,ds+\frac{1}{2m}\sup_{t\in (0,T)}\int_{\Omega}|u_n(t)|^{m}\,dx+c_2
\end{align*}
where $c_2=c_2(T,|\Omega|,m)$.\\

The $B$ term can be estimated by H\"older's and Young's inequalities with $(m,m')$ as follows:
\begin{equation}\label{B}
B\le \int_0^t\|f\|_{L^{m}(\Omega)}\|u_n\|_{L^{m}(\Omega)}^{m-1}\,ds \le \frac{1}{4m}\|u_n\|_{L^{\infty}(0,T;L^{m}(\Omega))}^{m}+c_3\|f\|_{L^1(0,T;L^{m}(\Omega))}^m.
\end{equation}
We summarize saying that, if $m\ge 2$ and $q<\frac{p}{2}$, then it holds that
\[
\begin{array}{c}
\ds
\frac{1}{4m}\sup_{t\in (0,T)}\integrale |u_n(t)|^{m}\,dx+\frac{\al(m-1)}{2\mu^p}\iint_{Q_T}|\N [|u_n|^{\mu}]|^p\,dx\,ds\\
[3mm]\ds
\le c_3\|f\|_{L^1(0,T;L^{m}(\Omega))}^m+\frac{1}{m}\integrale |u_0|^{m}\,dx+c_2.
\end{array}
\]

\noindent
\textit{The case $m\ge 2$ and $q=\frac{p}{2}$.}\\
We come back to \eqref{mq} and assume that $t\le t_1\le T$, where $t_1$ has to be fixed. Now, we have $m-1+\frac{q}{p-q}=m$ and we estimate $A$ as follows:
\begin{align*}
A&\le \frac{\al(m-1)}{2\mu^p}\iint_{Q_t}|\N [|u_n|^{\mu}]|^p\,dx\,ds+\tilde{c}_1\iint_{Q_t}|u_n|^m\,dx\,ds\\
&\le \frac{\al(m-1)}{2\mu^p}\iint_{Q_{t}}|\N [|u_n|^{\mu}]|^p\,dx\,ds+\tilde{c}_2t_1\sup_{t\in (0,t_1)}\int_{\Omega}|u_n(t)|^{m}\,dx+\tilde{c}_3.
\end{align*}
Then, letting
\begin{equation}\label{condtsub}
\tilde{c}_2t_1<\frac{3}{4m},
\end{equation}
and recalling \eqref{B}, we obtain
\[
\begin{array}{c}
\ds
\left(\frac{3}{4m}-\tilde{c}_2t_1\right)\sup_{t\in (0,t_1)}\integrale |u_n(t)|^{m}\,dx+\frac{\al(m-1)}{2\mu^p}\iint_{Q_{t_1}}|\N [|u_n|^{\mu}]|^p\,dx\,ds\\
[3mm]\ds
\le c_3\|f\|_{L^1(0,T;L^{m}(\Omega))}^m+\frac{1}{m}\integrale |u_0|^{m}\,dx+\tilde{c}_3.
\end{array}
\]
We conclude partitioning the time interval $[0,T]$ into a finite number of subintervals $[t_j,t_{j+1}]$, $0\le j\le n-1$, where $t_0=0$ and $t_n=T$ so that \eqref{condtsub} is fulfilled in each subinterval replacing $t_1$ with $t_{j+1}-t_j$. \\

We now observe that, as in Part $2$ of Theorem \ref{sapp}, the uniform boundedness of $\{u_n\}_n$ in $L^p(0,T;W_0^{1,p}(\Omega))$ can be deduced changing the function $|u_n|^{m-2}u_n$ into $\vp(u_n)=u_n$ and proceeding estimating in an analogous way.\\
We have thus proved \eqref{disappsub1}.\\

\noindent
\textit{The case $1< m<2$ and $q<\frac{p}{2}$.}\\
We deal with this case taking $\vp(u_n)=\left[(1+|u_n|)^{m-1}-1\right]\text{sign}(u_n)$ as test function. Thus, we get
\begin{equation*}
\begin{array}{c}
\ds
\integrale \Psi(u_n(t))\,dx+\al(m-1)\iint_{Q_{t}}\frac{|\N u_n|^p}{(1+|u_n|)^{2-m}}\,dx\,ds\\
[3mm]\ds
\le \gamma\iint_{Q_t}|\N u_n|^{q}(1+|u_n|)^{m-1}\,dx\,ds+\iint_{Q_t}|f|(1+|u_n|)^{m-1}\,dx\,ds+\integrale \Psi(u_0)\,dx
\end{array}
\end{equation*}
where $\Psi(v)=\int_0^v\left((1+|z|)^{m-1}-1\right)\,dz$. Taking into account the inequalities
\[
b_m|v|^m-c_m\le \Psi(v)\le a_m|v|^m+a_m,
\]
we can estimate the previous one as below
\begin{equation}\label{mq2}
\begin{array}{c}
\ds
b_m\integrale |u_n(t)|^m\,dx+\al(m-1)\iint_{Q_t}\frac{|\N u_n|^p}{(1+|u_n|)^{2-m}}\,dx\,ds\\
[3mm]\ds
\le \underbrace{\gamma\iint_{Q_t}|\N u_n|^{q}(1+|u_n|)^{m-1}\,dx\,ds}_A+\underbrace{\iint_{Q_t}|f|(1+|u_n|)^{m-1}\,dx\,ds}_B\\
[3mm]\ds
+a_m\integrale |u_0|^m\,dx+(c_m+a_m)|\Omega|.
\end{array}
\end{equation}
We first take into account the $A$ term. Then, reasoning as in the case $m\ge 2$ and $q<\frac{p}{2}$, we obtain
\begin{equation*}
\begin{split}
A\le \frac{\al(m-1)}{2}\iint_{Q_t}\frac{|\N u_n|^p}{(1+|u_n|)^{2-m}}\,dx\,ds+\frac{b_m}{2}\sup_{t\in [0,t_1]}\|u_n(t)\|_{L^m(\Omega)}^m+c_1.
\end{split}
\end{equation*}
As far as $B$ is concerned, we estimate by H\"older's inequality with $(m,m')$. Moreover, recalling \eqref{Fsub}, we have
\begin{equation}\label{B2}
\begin{split}
B&\le \|f\|_{L^1(0,T;L^m(\Omega))}\|1+|u_n|\|_{L^\infty(0,t;L^m(\Omega))}^{m-1}\\
&\le \vare\|1+|u_n|\|_{L^\infty(0,t;L^m(\Omega))}^{m}+c_\vare\|f\|_{L^1(0,T;L^m(\Omega))}^m\\
&\le \frac{b_m}{4}\sup_{t\in [0,T]}\|u_n(t)\|_{L^m(\Omega)}^m+c_2\|f\|_{L^1(0,T;L^m(\Omega))}^m+c_3
\end{split}
\end{equation}
where the intermediate passage is due to Young's inequality with $(m,m')$.\\ 
Thus, we obtain
\begin{equation*}
\begin{array}{c}
\ds
\frac{b_m}{4}\sup_{t\in [0,T]}\|u_n(t)\|_{L^m(\Omega)}^m+\frac{\al(m-1)}{2}\iint_{Q_{t_1}}\frac{|\N u_n|^p}{(1+|u_n|)^{2-m}}\,dx\,ds\\
[3mm]\ds
\le a_m\integrale |u_0|^m\,dx+c_2\|f\|_{L^1(0,T;L^m(\Omega))}^m+C
\end{array}
\end{equation*}
where $C=C(T,m,|\Omega|)$.\\

\noindent
\textit{The case $1< m<2$ and $q=\frac{p}{2}$.}\\
We proceed as before setting $t\le t_1\le T$, where $t_1$ has to be defined. Then, taking into account the $A$ term in \eqref{mq2}, we apply Young's inequality with indices $(2,2)$ so we get
\begin{equation*}
\begin{split}
A&\le \frac{\al(m-1)}{2}\iint_{Q_t}\frac{|\N u_n|^p}{(1+|u_n|)^{2-m}}\,dx\,ds+\tilde{c}_1\iint_{Q_t}(1+|u_n|)^m\,dx\,ds\\
&\le \frac{\al(m-1)}{2}\iint_{Q_t}\frac{|\N u_n|^p}{(1+|u_n|)^{2-m}}\,dx\,ds+\tilde{c}_2t_1\sup_{t\in [0,t_1]}\|u_n(t)\|_{L^m(\Omega)}^m+\tilde{c}_3T.
\end{split}
\end{equation*}
Setting $t_1$ so that 
\begin{equation}\label{condt}
\frac{3b_m}{4}-\tilde{c}_2t_1>0
\end{equation}
and recalling \eqref{B2}, we deduce 
\begin{equation*}
\begin{array}{c}
\ds
\left(\frac{3b_m}{4}-\tilde{c}_2t_1\right)\sup_{t\in [0,t_1]}\|u_n(t)\|_{L^m(\Omega)}^m+\frac{\al(m-1)}{2}\iint_{Q_{t_1}}\frac{|\N u_n|^p}{(1+|u_n|)^{2-m}}\,dx\,ds\\
[3mm]\ds
\le a_m\integrale |u_0|^m\,dx+c_2\|f\|_{L^1(0,T;L^m(\Omega))}^m+C
\end{array}
\end{equation*}
where $C=C(T,m,|\Omega|)$.\\
We conclude partitioning the time interval $[0,T]$ into a finite number of subintervals $[t_j,t_{j+1}]$, $0\le j\le n-1$, where $t_0=0$ and $t_n=T$ so that \eqref{condt} is fulfilled in each subinterval. We have thus proved \eqref{disappsub1}.\\
 
\noindent
\textit{The case $m=1$ and $q<\frac{p}{2}$.}\\
We claim that we are within the assumptions of Lemma \ref{MC}. Indeed, we can reason as in the first part of Theorem \ref{sapp3} and obtain the $L^1$ data classical estimates (see \cite{BG}):
\[
\sup_{t\in [0,T]}\integrale |u_n(t)|\,dx\le M
\]
and 
\[
\al\iint_{Q_T}|\N T_k(u_n)|^p\,dx\,ds\le kM
\]
where 
\[
M=\gamma\iint_{Q_T}|\N u_n|^q\,dx\,ds+\iint_{Q_T}|f|\,dx\,ds+\integrale|u_0|\,dx.
\]
Then, Lemma \ref{MC} implies that
\[
\||\N u_n|^{q}\|_{M^{\frac{p}{2q}}(Q_T)}^{\frac{p}{2q}}\le c\left[ \||\N u_n|^q\|_{L^1(Q_T)}+\|f\|_{L^1(Q_T)}+\|u_0\|_{L^1(\Omega)}
\right]
\]
from which, being $q<\frac{p}{2}$, we obtain
\[
\||\N u_n|^{q}\|_{L^{1}(Q_T)}\le c \||\N u_n|^q\|_{L^1(Q_T)}^{\frac{2q}{p}}+c_0
\]
where $c_0=c_0(\|f\|_{L^1(Q_T)},\|u_0\|_{L^1(\Omega)})$ above the parameters of the problem. An application of Young's inequality and the assumption $q<\frac{p}{2}$ give us
\[
\||\N u_n|^{q}\|_{L^{1}(Q_T)}\le c.
\]
The uniform boundedness of $\{|\N u_n|^q\}_n$ in $L^1(Q_T)$ allows us to conclude saying that $\{u_n\}_n$ satisfies the estimates in Lemmas \ref{MGN} and \ref{MC}.
\end{proof}

\subsection{Some convergence results}
\begin{proposition}\label{subae}
Assume $1<p<2$, \eqref{A1}, \eqref{A2}, \eqref{A3}, \eqref{Hsub},  either \eqref{id1} or \eqref{id2}, \eqref{Fsub} and let $\{u_n\}_n$ be a sequence of solutions of \eqref{Pn}. Then, we have that, for some function $u$,
\[
u_n\to u \quad \text{a.e. in }\,\,Q_T,
\] 
\[
\N u_n\to\N u\quad \text{a.e. in }\,\,Q_T,
\] 
\[
H_n(t,x,\nabla u_n)\to H(t,x,\nabla u) \quad\text{strongly in}\quad L^1(Q_T).
\]
In particular, if either $1<m<2$ and $q\le\frac{p}{2}$ or $m=1$ and $q<\frac{p}{2}$ then
\[
T_k(u_n)\to T_k(u)\quad\text{strongly in}\quad L^p(0,T;W^{1,p}_0(\Omega))\,\,\forall k>0.
\]
\end{proposition}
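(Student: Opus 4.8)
The plan is to follow the scheme already used for Propositions \ref{abc} and \ref{abc3}, splitting the argument according to the three subcases of Theorem \ref{sappsub}. All the stated convergences reduce, through the quoted results, to one structural fact: that the right-hand side $\{H_n(t,x,\N u_n)\}_n$ is not merely bounded but \emph{equi-integrable} in $L^1(Q_T)$, which in turn follows from a bound of $\{|\N u_n|^q\}_n$ in some $L^s(Q_T)$ with $s>1$. First I would produce such a bound from Theorem \ref{sappsub}, subcase by subcase. When $m\ge 2$, the sequence $\{u_n\}_n$ is bounded in $L^p(0,T;W^{1,p}_0(\Omega))$, hence $\{|\N u_n|^q\}_n$ is bounded in $L^{p/q}(Q_T)$ with $p/q\ge 2$, since $q\le p/2$. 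When $1<m<2$, I would reproduce the H\"older interpolation of Proposition \ref{abc}: the bound of $\{(1+|u_n|)^{\mu-1}u_n\}_n$ in $L^p(0,T;W^{1,p}_0(\Omega))\cap L^\infty(0,T;L^{m}(\Omega))$ combined with Gagliardo--Nirenberg (Theorem \ref{teoGN}) yields $\{|\N u_n|^\eta\}_n$ bounded in $L^1(Q_T)$ for a suitable $\eta$ with $q<\eta<p$, so that $\{|\N u_n|^q\}_n$ is bounded in $L^{\eta/q}(Q_T)$. Finally, when $m=1$ and $q<p/2$, the Marcinkievicz estimate of Lemma \ref{MC} gives $\{|\N u_n|^q\}_n$ bounded in $M^{p/(2q)}(Q_T)$ with $p/(2q)>1$, hence in $L^s(Q_T)$ for some $s>1$ by the embedding of Marcinkievicz spaces. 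Through the growth condition \eqref{Hsub}, each case transfers this integrability to $\{H_n(t,x,\N u_n)\}_n$.

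Next I would extract the a.e. convergences. The a.e. convergence $u_n\to u$ comes from a Simon-type compactness argument as in Corollary \ref{simon} and Proposition \ref{abc}: for $m\ge2$ one applies \cite[Corollary $4$]{S} directly to $\{u_n\}_n$ with $X=W^{1,p}_0(\Omega)$, $B=L^p(\Omega)$ and $Y=W^{-1,s'}(\Omega)$, $s>N$; for $1<m<2$ and for $m=1$ one applies it to (a regularization of) the truncations $T_k(u_n)$, exactly as in \cite[Theorem $2.1$]{P}, the time derivative being controlled in $L^1(0,T;Y)$ by the equation and the $L^1$-bound on the right-hand side. With $u_n\to u$ a.e. and the right-hand side bounded in $L^1(Q_T)$, the a.e. convergence $\N u_n\to\N u$ is then furnished by \cite{BG,BDGO}, precisely as in the earlier sections.

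Once the gradients converge a.e., the continuity of $H$ in $\xi$ (together with the construction of the approximations $H_n$) gives $H_n(t,x,\N u_n)\to H(t,x,\N u)$ a.e.; coupling this with the equi-integrability established in the first step, the Vitali Theorem upgrades it to strong convergence in $L^1(Q_T)$. For the concluding assertion, this $L^1(Q_T)$ convergence of the right-hand side is exactly the ingredient needed to run the truncation argument of \cite{BlMu,P}, which produces $T_k(u_n)\to T_k(u)$ strongly in $L^p(0,T;W^{1,p}_0(\Omega))$ for every $k>0$ in the regimes $1<m<2$, $q\le p/2$ and $m=1$, $q<p/2$ (the case $m\ge2$ being already contained in the $L^p(0,T;W^{1,p}_0(\Omega))$ bound).

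I expect the main obstacle to lie in the endpoint regime, namely $m=1$ together with $1<m<2$ when $q$ approaches $p/2$: there one must check that the controlled power of the gradient is \emph{strictly} better than $L^1$, so that $\{H_n(t,x,\N u_n)\}_n$ is genuinely equi-integrable and Vitali applies. This is precisely where the strict inequality $q<p/2$ and the Marcinkievicz bound $M^{p/(2q)}(Q_T)$ of Lemma \ref{MC} are indispensable, and where the interpolation exponent $\eta$ of the case $1<m<2$ must be verified to exceed $q$. A secondary delicacy is the a.e. convergence of the gradients via \cite{BDGO} in the low-$p$ range $1<p<2$, where $\{u_n\}_n$ need not belong to $L^p(0,T;W^{1,p}_0(\Omega))$ and one must argue through truncations and the renormalized quantities $(1+|u_n|)^{\mu-1}u_n$ rather than through $u_n$ itself.
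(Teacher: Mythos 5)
Your overall route is the paper's: split according to $m\ge 2$, $1<m<2$, $m=1$; produce a bound on a power $|\N u_n|^b$ of the gradient with $b$ \emph{strictly} larger than $q$; get $u_n\to u$ a.e.\ by Simon-type compactness (applied to $u_n$ itself when possible, to regularized truncations as in \cite[Theorem $2.1$]{P} otherwise); invoke \cite{BDGO} for the a.e.\ convergence of the gradients; Vitali for the strong $L^1(Q_T)$ convergence of the right-hand side; and \cite{BlMu,P} for the truncations. Your treatment of $m\ge 2$ (bound in $L^{p/q}$, $p/q\ge 2$) and of $m=1$, $q<p/2$ (Lemma \ref{MC}, $|\N u_n|^q$ bounded in $M^{p/(2q)}(Q_T)$ with $p/(2q)>1$, hence equi-integrable) coincides with what the paper does by recalling Propositions \ref{aeD} and \ref{abc3}.

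There is, however, one genuine gap, in the case $1<m<2$. Your only mechanism for producing $\eta$ with $q<\eta<p$ is Gagliardo--Nirenberg applied to $(1+|u_n|)^{\mu-1}u_n$, which belongs to $L^\infty(0,T;L^{m/\mu}(\Omega))\cap L^p(0,T;W^{1,p}_0(\Omega))$ (note the exponent is $m/\mu$, not $m$ as you wrote). Theorem \ref{teoGN} requires $h\le \eta^*$, i.e.\ $\frac{m}{\mu}\le p^*$, and an elementary computation shows this is equivalent to $p\ge \frac{2N}{N+m}$. Since the proposition covers the whole range $1<p<2$ and $1<\frac{2N}{N+m}<2$, the subrange $1<p<\frac{2N}{N+m}$ is nonempty, and there your interpolation step simply cannot be run --- this is also why your diagnosis of the ``main obstacle'' is slightly off: at small $p$ the problem is not that $\eta$ might fail to exceed $q$, but that Gagliardo--Nirenberg is inapplicable altogether. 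The paper handles this regime as a separate subcase: it uses only the $L^\infty(0,T;L^{m}(\Omega))$ bound on $u_n$ in the H\"older step and chooses $b$ so that $pb\frac{1-\mu}{p-b}=m$, i.e.\ $b=\frac{p}{2}m$, which satisfies $q\le\frac{p}{2}<\frac{p}{2}m<p$ precisely because $1<m<2$; so at the linear endpoint $q=\frac{p}{2}$ the needed strict inequality $b>q$ comes from $m>1$, not from Gagliardo--Nirenberg. With this subcase added (and the space $L^\infty(0,T;L^{m/\mu}(\Omega))$ corrected), your argument becomes the paper's proof.
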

\begin{proof}
We just deal with the case $1\le m<2$, since having $m\ge 2$ allows us to reason as in  Corollary \ref{simon} and Proposition \ref{aeD}.\\

We start proving that $\{|\N u_n|^b\}_n$ is bounded in $L^1(Q_T)$ for some $q<b<p$.\\

\noindent
\textit{The case $1<m<2$ and $\frac{2N}{N+m}<p<2$.}\\
Theorem \ref{sappsub} provides that $\{(1+|u_n|)^{\mu-1}u_n\}_n\subseteq L^\infty(0,T;L^{\frac{m}{\mu}}(\Omega))\cap  L^p(0,T;W_0^{1,p}(\Omega))$. Note that $\frac{m}{\mu}<p^*$ if and only if $p>\frac{2N}{N+m}$: we thus reason as in Proposition \ref{abc}, getting $\ds b=p\frac{N\mu+m}{ N+m}$. Algebraic computations show that such a value of $b$ satisfies the inequalities $\ds q\le\frac{p}{2}<b$, being $m> 1$, and $\ds b<p$, since $\mu<1$. \\

\noindent
\textit{The case $1<m<2$ and $1<p\le\frac{2N}{N+m}$.}\\
Again, we reason as in Proposition \ref{abc} estimating the $b$ power of the gradient as
\begin{equation*}
\iint_{Q_T}|\nabla u_n|^b\,dx\,ds\le \biggl(\iint_{Q_T} \frac{|\nabla u_n|^p}{(1+|u_n|)^{p(1-\mu)}}\,dx\,dt\biggr)^{\frac{b}{p}}\biggl(\iint_{Q_T} (1+|u_n|)^{pb\frac{(1-\mu)}{p-b}}\,dx\,dt\biggr)^{\frac{p-b}{p}}.
\end{equation*}
However, since our current range of $p$ implies $\frac{m}{\mu}>p^*$, we are forced to require $\ds pb\frac{(1-\mu)}{p-b}=m$. This condition leads to $\ds b=\frac{p}{2}m$ which verifies $q< \frac{p}{2}m<p$ since $1<m<2$. \\

Since the r.h.s. is bounded in $L^1(Q_T)$, we can prove the a.e. convergences of $u_n\to u$ and $\N u_n\to \N u$ as in Proposition \ref{abc}. More precisely, we have to split the proof between $b$ greater and smaller than one and follows the line of the Proposition quoted above.
\\

Now, having $q<b$, we deduce the equi-integrability of the r.h.s. and the strong convergence in $L^1(Q_T)$ of the r.h.s. follows, again, by Vitali Theorem.
%

The strong convergence of the truncation function can be deduced as in Proposition \ref{abc}.\\

\noindent
\textit{The cases $m=1$.}\\
We just recall Proposition \ref{abc3} and say that we can proceed in the same way.
\end{proof}

\subsection{The existence result}
\begin{theorem}\label{Tsub}
Let $1<p<2$ and assume \eqref{A1}, \eqref{A2}, \eqref{A3}, \eqref{Hsub},  either \eqref{id1} or \eqref{id2} and \eqref{Fsub}. 
Then, there exists at least one weak solution of the problem \eqref{Psub} such that
\begin{itemize}
\item if $m\ge 2$, then
\[
u\in L^p(0,T; W^{1,p}_0(\Omega))
\]
and satisfies the weak formulation
\[
-\integrale u_0(x)\vp(0,x)\,dx+\iint_{Q_T}-u\vp_t+a(t,x,u,\N u)\cdot\N\vp\,dx\,dt=\iint_{Q_T}H(t,x,\N u)\vp\,dx\,dt
\]
for every test function $\vp\in L^p(0,T;W_0^{1,p}(\Omega))\cap  L^{\infty}(Q_T)$ such that $\vp_t\in L^{p'}(Q_T)$ and $\vp(T,x)=0$. Moreover, this solution fulfils the following regularity:
\[
|u|^\mu\in L^p(0,T;W^{1,p}_0(\Omega))\quad\text{with}\quad \mu=\frac{m-2+p}{p};
\] 
\item if $1<m<2$, then
\[
u\in \mathcal{T}_0^{1,p}(Q_T), 
\]
\[
H(t,x,\nabla u)\in L^1( Q_T),
\]
\[
\begin{array}{c}
\ds
-\int_\Omega S(u_0)\varphi(0)\,dx+\iint_{Q_T} \left[-S(u)\varphi_t+ a(t,x,u,\N u)\cdot \N(S'(u)\vp)\right]\,dx\,dt\\
[3mm]\ds
= \iint_{Q_T} H(t,x,\nabla u)S'(u)\varphi\,dx\,dt,
\end{array}
\]
for every $S\in W^{2,\infty}(\mathbb{R})$ such that $S'(\cdot)$ has compact support and for every test function $\vp\in L^p(0,T;W_0^{1,p}(\Omega))\cap  L^{\infty}(Q_T)$ such that $\vp_t\in L^{p'}(Q_T)$ and $\vp(T,x)=0$. Moreover, such a solution fulfils the following regularities:
\begin{equation*}
(1+|u|)^{\mu-1}u\in L^p(0,T;W_0^{1,p}(\Omega))\quad\text{with}\quad \mu=\frac{m-2+p}{p};
\end{equation*}
\item if $m=1$, then
\[
u\in \mathcal{T}_0^{1,p}(Q_T), 
\]
\[
 H(t,x,\nabla u)\in L^1( Q_T),
\]
\[
\lim_{n\to \infty}\frac{1}{n}\iint_{\{n\le |u|\le 2n\}}a(t, x,u,\N u)\cdot \N u=0, 
\]
\[
\begin{array}{c}
\ds
-\int_\Omega S(u_0)\varphi(0)\,dx+\iint_{Q_T} \left[-S(u)\varphi_t+ a(t,x,u,\N u)\cdot \N(S'(u)\vp)\right]\,dx\,dt\\
[3mm]\ds
= \iint_{Q_T} H(t,x,\nabla u)S'(u)\varphi\,dx\,dt,
\end{array}
\]
for every $S\in W^{2,\infty}(\mathbb{R})$ such that $S'(\cdot)$ has compact support and for every test function $\vp\in L^p(0,T;W_0^{1,p}(\Omega))\cap  L^{\infty}(Q_T)$ such that $\vp_t\in L^{p'}(Q_T)$ and $\vp(T,x)=0$.
\end{itemize}
\end{theorem}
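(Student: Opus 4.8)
The plan is to obtain the solution by passing to the limit in the approximating problems \eqref{Pn}, exactly in the spirit of Theorems \ref{Tfe}, \ref{Trin} and \ref{Trin2}. Let $\{u_n\}_n\subseteq L^p(0,T;W^{1,p}_0(\Omega))$ be the sequence of solutions of \eqref{Pn}. The two essential ingredients are already at our disposal: Theorem \ref{sappsub} provides the uniform a priori bounds adapted to each range of $m$, while Proposition \ref{subae} yields, up to subsequences, the a.e. convergences $u_n\to u$ and $\N u_n\to\N u$ in $Q_T$ together with the strong convergence $H_n(t,x,\N u_n)\to H(t,x,\N u)$ in $L^1(Q_T)$. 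As usual, the a.e. convergence of the gradients combined with \eqref{A2} gives the weak convergence of the fluxes $a(t,x,u_n,\N u_n)\rightharpoonup a(t,x,u,\N u)$ in $(L^{p'}(Q_T))^N$ (on the truncations, when $m<2$). The three cases of the statement differ only in the notion of solution, and hence in the admissible test functions.

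If $m\ge 2$, Theorem \ref{sappsub} ensures that both $\{u_n\}_n$ and $\{|u_n|^\mu\}_n$ are bounded in $L^p(0,T;W^{1,p}_0(\Omega))$. One then reproduces the argument of Theorem \ref{Tfe}: the weak convergence of the fluxes, the strong $L^1$ convergence of the right-hand side and the convergence $u_{0,n}\to u_0$ in $L^m(\Omega)$ allow one to pass to the limit in the weak formulation of \eqref{Pn} and recover the weak formulation in the statement. The regularity $|u|^\mu\in L^p(0,T;W^{1,p}_0(\Omega))$ follows from the uniform bound on $\{|u_n|^\mu\}_n$ by weak lower semicontinuity, while the continuity in $C([0,T];L^m(\Omega))$ is obtained through the Vitali argument and the trace result of \cite{P}, exactly as in Theorem \ref{Tfe}.

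When $1<m<2$ finite energy is lost, so I would pass to the limit in the renormalized formulation, following Theorem \ref{Trin}. Fixing $S\in W^{2,\infty}(\mathbb{R})$ with $\mathrm{supp}(S')\subseteq[-M,M]$, every term involves only $T_M(u_n)$. The strong convergence $T_M(u_n)\to T_M(u)$ in $L^p(0,T;W^{1,p}_0(\Omega))$ supplied by Proposition \ref{subae} controls the flux term $S'(u_n)a(t,x,u_n,\N u_n)$ and, above all, the quadratic term $S''(u_n)\,a(t,x,u_n,\N u_n)\cdot\N u_n$, which converges in $L^1(Q_T)$ because $S''$ is bounded and $a(t,x,T_M(u_n),\N T_M(u_n))\cdot\N T_M(u_n)$ converges to the corresponding limit in $L^1(Q_T)$. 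Passing to the limit in the remaining terms as in Theorem \ref{Trin} produces a solution in the sense of Definition \ref{defrin}, with the announced regularity $(1+|u|)^{\mu-1}u\in L^p(0,T;W^{1,p}_0(\Omega))$. For $m=1$ (and $q<\frac{p}{2}$) Theorem \ref{sappsub} places us within Lemmas \ref{MGN} and \ref{MC}, so that $\{|\N u_n|^q\}_n$ is bounded in $L^1(Q_T)$ and the renormalized formulation is recovered by the very same limiting procedure, now giving Definition \ref{defrin2}; the extra energy decay condition \eqref{sr32} is established following \cite[Theorem $2$]{Bl} (see also \cite[Lemma $3.2$ and Remark $2.4$]{BlMu}), exactly as in Theorem \ref{Trin2}.

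The genuinely delicate point is not the limit passage itself but the strong convergence of the truncations $T_k(u_n)$ in $L^p(0,T;W^{1,p}_0(\Omega))$, on which the treatment of the quadratic renormalization term (for $1\le m<2$) and the verification of \eqref{sr32} (for $m=1$) entirely rest. This strong convergence is precisely what Proposition \ref{subae} delivers, via the arguments of \cite{BlMu,P}, and I would regard it as the main obstacle that has already been overcome upstream: once it is granted, the proof of the present theorem reduces to the routine but careful limit passages described above.
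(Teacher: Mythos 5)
Your proposal is correct and follows exactly the paper's own route: the paper's proof consists precisely of invoking the a priori estimates of Theorem \ref{sappsub} and the convergences of Proposition \ref{subae}, and then repeating the limit passages of Theorem \ref{Tfe} for $m\ge 2$, Theorem \ref{Trin} for $1<m<2$, and Theorem \ref{Trin2} for $m=1$. Your write-up simply makes those limit passages explicit, so there is nothing to add.
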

\begin{proof}
The a priori estimate proved in Theorem \ref{sappsub} and the convergence results in Proposition \ref{subae} allow us to reason as in Theorem \ref{Tfe} if $m\ge 2$, Theorem \ref{Trin} whenever $1<m<2$ and, finally, as in Theorem \ref{Trin2} as $m=1$.
\end{proof}
\begin{remark}
In particular, $|\N u|^b\in L^1(Q_T)$ for $b=\frac{p}{2}m$ if $1<p\le \frac{2N}{N+m}$ and $b=\frac{N(m-2+p)+pm}{N+m}$ if $\frac{2N}{N+m}<p<N$. 
\end{remark}


\appendix
\section*{Appendices}\label{app}
\renewcommand{\thesubsection}{\Alph{subsection}}
\subsection{The approximating problem and some preliminary results}\label{secpbn}
\setcounter{equation}{0}
\renewcommand{\theequation}{a.\arabic{equation}}

\begin{theoa}[Gagliardo-Nirenberg inequality]\label{teoGN}
Let $\Omega\subset \mathbb{R}^N$ be a bounded and open subset and $T$ a real positive number. Then, if
\begin{equation}\label{GN}
v\in L^{\infty}(0,T;L^{h}(\Omega))\cap L^{\eta}(0,T;W_0^{1,\eta}(\Omega))
\end{equation}
where
\[
1\le \eta<N \quad \text{and}\quad 1\le h\le\eta^*,
\]
we have that
\begin{equation*}
v\in L^y(0,T;L^w(\Omega))
\end{equation*}
where the couple $(w,y)$ fulfils
\[
h\le w\le \eta^*,\,\,\eta\le y\le \infty
\]
and satisfies the relation
\begin{equation}\label{rel}
\frac{Nh}{w}+\frac{ N(\eta-h)+\eta h }{y}=N.
\end{equation}
Moreover, the following inequality holds:
\begin{equation}\label{disGN}
\int_0^T \|v(t)\|_{L^w(\Omega)}^y\,dt\le c(N,\eta,h)\|v\|_{L^{\infty}(0,T;L^h(\Omega))}^{y-\eta}\int_0^T\|\N v(t)\|_{L^{\eta}(\Omega)}^{\eta}\,dt.
\end{equation}
In particular, having $w=y$ implies that 
\begin{equation*}
v\in L^{\eta\frac{N+h}{N}}(Q_T)
\end{equation*}
and the estimate reads
\begin{equation}\label{disGN=}
\int_0^T \|v(t)\|_{L^w(\Omega)}^w\,dt\le c(N,\eta,h)\|v\|_{L^{\infty}(0,T;L^h(\Omega))}^{\frac{\eta h}{N}}\int_0^T\|\N v(t)\|_{L^{\eta}(\Omega)}^{\eta}\,dt.
\end{equation}
\end{theoa}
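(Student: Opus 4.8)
The plan is to reduce this parabolic statement to the classical time-independent Sobolev inequality applied slice by slice, followed by a Hölder interpolation in space and a single integration in time. First I would fix $t$ and use the Sobolev embedding $W_0^{1,\eta}(\Omega)\hookrightarrow L^{\eta^*}(\Omega)$, valid since $1\le\eta<N$, to obtain
\[
\|v(t)\|_{L^{\eta^*}(\Omega)}\le c_S\|\nabla v(t)\|_{L^\eta(\Omega)}
\]
for almost every $t\in(0,T)$. Next, for any exponent $w$ with $h\le w\le\eta^*$ I would interpolate between $L^h$ and $L^{\eta^*}$: writing $\frac1w=\frac{1-\theta}{h}+\frac{\theta}{\eta^*}$ with $\theta\in[0,1]$, Hölder's inequality gives
\[
\|v(t)\|_{L^w(\Omega)}\le\|v(t)\|_{L^h(\Omega)}^{1-\theta}\|v(t)\|_{L^{\eta^*}(\Omega)}^{\theta}\le c_S^\theta\|v(t)\|_{L^h(\Omega)}^{1-\theta}\|\nabla v(t)\|_{L^\eta(\Omega)}^{\theta}.
\]

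The decisive choice is to force the power of the gradient norm to match the exponent $\eta$ carried by the time integral in \eqref{disGN}. Raising the previous display to the power $y$, bounding $\|v(t)\|_{L^h(\Omega)}$ by $\|v\|_{L^\infty(0,T;L^h(\Omega))}$, and integrating over $(0,T)$ yields
\[
\int_0^T\|v(t)\|_{L^w(\Omega)}^y\,dt\le c\,\|v\|_{L^\infty(0,T;L^h(\Omega))}^{y(1-\theta)}\int_0^T\|\nabla v(t)\|_{L^\eta(\Omega)}^{y\theta}\,dt.
\]
I would then impose $y\theta=\eta$, i.e.\ $\theta=\eta/y$, so that the last integral becomes exactly $\int_0^T\|\nabla v(t)\|_{L^\eta(\Omega)}^{\eta}\,dt$ and the outer exponent becomes $y(1-\theta)=y-\eta$, recovering \eqref{disGN}. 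The constraint $\theta\in[0,1]$ translates into $\eta\le y\le\infty$, while the range $h\le w\le\eta^*$ is precisely the interpolation window.

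It remains to check that substituting $\theta=\eta/y$ into the interpolation identity $\frac1w=\frac{1-\theta}{h}+\frac{\theta}{\eta^*}$, with $\eta^*=\frac{N\eta}{N-\eta}$, reproduces the stated relation \eqref{rel}; a short computation multiplying through by $Nh$ turns the identity into $\frac{Nh}{w}+\frac{N(\eta-h)+\eta h}{y}=N$, as claimed. Finally, the special case $w=y$ follows by setting the two exponents equal in \eqref{rel}, which forces $w=y=\eta\frac{N+h}{N}$ and $y-\eta=\frac{\eta h}{N}$, giving \eqref{disGN=}. The only genuinely delicate point is this exponent bookkeeping: the estimate closes cleanly only because the choice $\theta=\eta/y$ makes the gradient appear to the first power $\eta$ inside the time integral, so that no further interpolation in time is needed; everything else is the classical slicewise Sobolev--Hölder argument.
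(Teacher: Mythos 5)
Your proof is correct and complete. Note that the paper itself gives no proof of Theorem \ref{teoGN}: it is quoted in Appendix A as a classical auxiliary result (this is the standard parabolic Gagliardo--Nirenberg embedding, found e.g.\ in Lady\v{z}enskaja--Solonnikov--Ural'ceva or DiBenedetto), so there is no internal argument to compare yours against; what you wrote is precisely the standard derivation of that classical statement. Your exponent bookkeeping checks out: applying H\"older with conjugate exponents $\frac{h}{w(1-\theta)}$ and $\frac{\eta^*}{w\theta}$ justifies the interpolation step, the relation \eqref{rel} is exactly the interpolation identity $\frac{1}{w}=\frac{1-\theta}{h}+\frac{\theta}{\eta^*}$ rewritten with $\theta=\eta/y$ (multiply through by $Nh$ and use $\eta^*=\frac{N\eta}{N-\eta}$), the constraint $\theta\in[0,1]$ is equivalent to $y\ge\eta$, and setting $w=y$ in \eqref{rel} forces $w=y=\eta\frac{N+h}{N}$, hence $y-\eta=\frac{\eta h}{N}$, which is \eqref{disGN=}. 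The only loose ends are harmless: the degenerate case $y=\infty$ (where $\theta=0$, $w=h$ and \eqref{disGN} must be read as the trivial $L^\infty(0,T;L^h(\Omega))$ bound rather than a time integral), and the measurability of $t\mapsto\|v(t)\|_{L^w(\Omega)}$, which is standard. The constant you obtain is $c_S^{\eta}$ with $c_S$ the Sobolev constant of $W_0^{1,\eta}(\Omega)\hookrightarrow L^{\eta^*}(\Omega)$, which indeed depends only on $N$ and $\eta$, consistent with $c(N,\eta,h)$ in the statement.
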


The approximating problem we consider during the paper is the following:
\begin{equation}\label{Pn}\tag{$P_n$}
\begin{cases}
\begin{split}
& (u_n)_t-\dive a(t,x,u_n,\nabla u_n)=H_n(t,x,\nabla u_n)\qquad & Q_T,\\
& u_n=0 \qquad &(0,T)\times \partial \Omega,\\
& u_n(0,x)=u_{0,n}(x)\qquad & \Omega,
\end{split}
\end{cases}
\end{equation}
where $\{u_{0,n}\}_n=\{T_n(u_0)\}_n\subseteq L^\infty(\Omega)$ and $\{H_n(t,x,\xi)\}_n=\{T_n(H(t,x,\xi))\}_n$. Thanks to \cite{G} (see also \cite{BMP} if $p=2$ and \cite{DGP}, \cite{OP}) we have that \eqref{Pn} admits (at least) a solution $u_n$ such that
\[
u_n\in L^p(0,T;W^{1,p}_0(\Omega))\cap L^{\infty}(Q_T),\,\, (u_n)_t \in L^{p'}(0,T;W^{-1,p'}(\Omega))+L^1(Q_T),
\]
\[
-\int_{\Omega}u_{0,n}\varphi(0,x)\,dx+\iint_{Q_T}\bigl[-\varphi_t u_n+a(t,x,u_n,\nabla u_n)\cdot\nabla \varphi\bigr]  \,dt\,dx=\iint_{Q_T}  H_n(t,x,\nabla u_n)\varphi \,dt\,dx,
\]
for every $\varphi\in C_c^{\infty}([0,T)\times\Omega)$  and for every fixed $n\in \mathbb{N}$.\\

Since we will take more general test functions, we here recall a useful result aimed at justifying our choices. Its proof is a consequence of \cite[Lemma $4.6$ and Corollary $4.7$]{PPP}.

\begin{propa}
Let $u_n$ be a solution of \eqref{Pn}.
Then
\begin{multline*}
\int_{\Omega}\Psi(u_n(t))\,dx+\iint_{Q_t}a(s,x,u_n,\nabla u_n)\cdot \nabla u_n\psi'(u_n)\,dx\,ds\\=\iint_{Q_t} H(s,x,\nabla u_n)\psi(u_n) \,dx\,ds+\int_{\Omega}\Psi(u_{0,n})\,dx \qquad \text{a.e.}\quad t\in(0,T),
\end{multline*}
for every $\psi\in W^{1,\infty}(\mathbb{R})$ such that $\psi(0)=0$, where $\Psi(v)=\int_0^v \psi (w)\,dw$.
\end{propa}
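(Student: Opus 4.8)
The plan is to insert $\psi(u_n)$ as a test function in the weak formulation of \eqref{Pn} and then to turn the resulting time-derivative pairing into the boundary term $\int_\Omega \Psi(u_n(t))\,dx-\int_\Omega \Psi(u_{0,n})\,dx$ through a chain rule in time. First I would check the admissibility of $\psi(u_n)$: since $u_n\in L^\infty(Q_T)$ and $\psi\in W^{1,\infty}(\mathbb{R})$ with $\psi(0)=0$, both $\psi(u_n)$ and $\Psi(u_n)$ are bounded, and the Sobolev chain rule gives $\psi(u_n)\in L^p(0,T;W^{1,p}_0(\Omega))\cap L^\infty(Q_T)$ with $\N \psi(u_n)=\psi'(u_n)\N u_n$ almost everywhere. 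Hence the diffusion term factors correctly as $a(s,x,u_n,\N u_n)\cdot \N \psi(u_n)=\psi'(u_n)\,a(s,x,u_n,\N u_n)\cdot \N u_n$, and the right-hand side $H_n(s,x,\N u_n)\psi(u_n)$ is integrable because $\psi(u_n)\in L^\infty(Q_T)$ while $H_n(s,x,\N u_n)\in L^1(Q_T)$.

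The core of the argument is the treatment of the time derivative. As $(u_n)_t\in L^{p'}(0,T;W^{-1,p'}(\Omega))+L^1(Q_T)$ and $\psi(u_n)\in L^p(0,T;W^{1,p}_0(\Omega))\cap L^\infty(Q_T)$, the pairing $\langle (u_n)_s,\psi(u_n)\rangle$ is well defined for almost every $s$ and integrable on $(0,t)$: the $L^{p'}(W^{-1,p'})$ part matches the $L^p(W^{1,p}_0)$ regularity, while the $L^1(Q_T)$ part matches the $L^\infty(Q_T)$ bound. What must be established is the integration-by-parts formula
\begin{equation*}
\int_0^t \langle (u_n)_s,\psi(u_n)\rangle\,ds=\int_\Omega \Psi(u_n(t))\,dx-\int_\Omega \Psi(u_{0,n})\,dx\qquad\text{for a.e. }t\in(0,T),
\end{equation*}
which is precisely \cite[Lemma $4.6$ and Corollary $4.7$]{PPP}. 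To justify it I would regularize $u_n$ in time (for instance by a Steklov average or a Landes-type mollification $u_n^\mu$), for which the elementary chain rule $\frac{d}{ds}\int_\Omega \Psi(u_n^\mu)\,dx=\langle(u_n^\mu)_s,\psi(u_n^\mu)\rangle$ is available, and then pass to the limit as $\mu\to 0$ using the convergence of $u_n^\mu$ in $L^p(0,T;W^{1,p}_0(\Omega))$ together with the continuity of $s\mapsto\int_\Omega \Psi(u_n(s))\,dx$, which follows from $u_n\in C([0,T];L^1(\Omega))$ and the boundedness of $u_n$.

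Finally I would assemble the pieces. Extending the class of admissible test functions from $C_c^\infty([0,T)\times\Omega)$ to functions in $L^p(0,T;W^{1,p}_0(\Omega))\cap L^\infty(Q_T)$ with time derivative in the above sum space, and testing the equation on the slab $(0,t)$, gives
\begin{equation*}
\int_0^t \langle (u_n)_s,\psi(u_n)\rangle\,ds+\iint_{Q_t}\psi'(u_n)\,a(s,x,u_n,\N u_n)\cdot \N u_n\,dx\,ds=\iint_{Q_t} H_n(s,x,\N u_n)\psi(u_n)\,dx\,ds.
\end{equation*}
Substituting the chain-rule identity for the first term yields the claimed formula. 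The main obstacle is exactly the time-derivative identity of the second paragraph: because $(u_n)_t$ lives only in $L^{p'}(W^{-1,p'})+L^1$ and not in a reflexive dual, the integration by parts is not immediate and relies on the regularization-and-limit procedure packaged in the quoted results of \cite{PPP}; the remaining steps are routine uses of the Sobolev chain rule and H\"older's inequality.
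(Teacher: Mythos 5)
Your proposal is correct and follows essentially the same route as the paper: the paper's proof consists precisely of testing with $\psi(u_n)$ and invoking \cite[Lemma $4.6$ and Corollary $4.7$]{PPP} to justify the integration-by-parts formula for the time derivative, which is exactly the key step you identify and attribute to the same reference. Your additional sketch of the regularization argument behind that formula is a faithful outline of what those quoted results package, so there is no substantive difference.
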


\subsection{Marcinkievicz Lemmas}\label{marc}
\setcounter{equation}{0}
\renewcommand{\theequation}{b.\arabic{equation}}
Let $v$ a measurable real function belonging to $L^\infty(0,T;L^1(\Omega))$ and satisfying certain growth assumption on the $L^p(Q_T)$ norm of $|\N T_k(v)|$. We are going to prove two Lemmas which provide estimates on the measures of the level sets of suitable powers of $v$ and $|\N v|$.\\
For further result in this sense, we refer to \cite[Appendix $A$]{DNFG} as far as the parabolic setting is concerned and to \cite[Appendix $A$]{BMMP}, \cite[Section $4$]{BBGGPV} regarding the stationary problem.

\begin{lemb}\label{MGN}
Let $1<p<N$, $v\in \mathcal{T}^{1,p}_0(Q_T)$ be such that
\begin{equation*}
\|v\|_{L^\infty(0,T;L^1(\Omega))}\le M
\end{equation*}
and
\begin{equation*}
\int_0^T\|\N T_k(v(t))\|_{L^p(\Omega)}^p\,dt\le  Mk
\end{equation*}
for every $k>0$. Then, we have that
$|v|^{\frac{p(N+1)-N}{N+p}}\in M^{\frac{N+p}{N}}(Q_T)$ and
$|\N v|^{\frac{p(N+1)-N}{N+2}}\in M^{\frac{N+2}{N+1}}(Q_T)$. Moreover, the following estimates hold:
\begin{equation}\label{loru1}
\||v|^{\frac{p(N+1)-N}{N+p}}\|_{M^{\frac{N+p}{N}}(Q_T)}\le c M
\end{equation}
and
\begin{equation}\label{lorDu1}
\||\N v|^{\frac{p(N+1)-N}{N+2}}\|_{M^{\frac{N+2}{N+1}}(Q_T)}\le c  M
\end{equation}
where the constants $c$ depend on $N$, $p$ and $q$.
\end{lemb}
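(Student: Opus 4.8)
The plan is to prove the two Marcinkiewicz bounds \eqref{loru1} and \eqref{lorDu1} independently, reducing each to a decay estimate on the measure of a super-level set. For the bound on $v$ I would exploit the two hypotheses directly through the Gagliardo--Nirenberg inequality of Theorem \ref{teoGN} applied to the truncations $T_k(v)$ (which lie in $L^p(0,T;W^{1,p}_0(\Omega))$ since $v\in\mathcal{T}^{1,p}_0(Q_T)$); for the gradient bound I would then combine the $v$-estimate with the standard Boccardo--Gallou\"et splitting of the level sets of $|\N v|$.

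First, for \eqref{loru1}, I would apply the $w=y$ form \eqref{disGN=} of Theorem \ref{teoGN} to $T_k(v)$ with $h=1$ and $\eta=p$ (admissible since $1\le p^*$, as $p<N$), using $\|T_k(v)\|_{L^\infty(0,T;L^1(\Omega))}\le M$ and the energy hypothesis. This gives
\[
\iint_{Q_T}|T_k(v)|^{p\frac{N+1}{N}}\,dx\,dt\le c\,\|T_k(v)\|_{L^\infty(0,T;L^1(\Omega))}^{\frac{p}{N}}\int_0^T\|\N T_k(v)\|_{L^p(\Omega)}^p\,dt\le c\,M^{\frac{N+p}{N}}k.
\]
Since $|T_k(v)|=k$ on $\{|v|>k\}$, restricting the integral to that set yields $k^{p\frac{N+1}{N}}\meas\{|v|>k\}\le cM^{\frac{N+p}{N}}k$, i.e. $\meas\{|v|>k\}\le cM^{\frac{N+p}{N}}k^{-\frac{p(N+1)-N}{N}}$, which is exactly $|v|\in M^{\frac{p(N+1)-N}{N}}(Q_T)$. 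Raising to the power $\frac{p(N+1)-N}{N+p}$ converts this into \eqref{loru1}: the arithmetic $\frac{p(N+1)-N}{N}\cdot\frac{N+p}{p(N+1)-N}=\frac{N+p}{N}$ gives the correct order, and taking the $\frac{N}{N+p}$-th root of the level-set constant turns $M^{\frac{N+p}{N}}$ into the linear factor $M$ claimed.

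Next, for \eqref{lorDu1}, I would bound $\meas\{|\N v|>\lambda\}$ by splitting according to whether $|v|\le k$ or $|v|>k$, so that
\[
\meas\{|\N v|>\lambda\}\le \meas\{|\N T_k(v)|>\lambda\}+\meas\{|v|>k\}\le \frac{Mk}{\lambda^p}+cM^{\frac{N+p}{N}}k^{-\frac{p(N+1)-N}{N}},
\]
where the first term uses Chebyshev and the energy bound and the second is the estimate just proved. Optimising in $k$ — the terms balance at $k\sim M^{\frac{1}{N+1}}\lambda^{\frac{N}{N+1}}$, driven by the identity $\frac{p(N+1)-N}{N}+1=\frac{p(N+1)}{N}$ — yields $\meas\{|\N v|>\lambda\}\le cM^{\frac{N+2}{N+1}}\lambda^{-\frac{p(N+1)-N}{N+1}}$, that is $|\N v|\in M^{\frac{p(N+1)-N}{N+1}}(Q_T)$. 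Raising to the power $\frac{p(N+1)-N}{N+2}$ (so the order becomes $\frac{N+2}{N+1}$) and taking the $\frac{N+1}{N+2}$-th root of the constant produces \eqref{lorDu1}, again with linear dependence on $M$.

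I expect the only genuinely delicate point to be the optimisation in $k$ in the gradient step: one must choose $k=k(\lambda)$ so that the two competing powers of $\lambda$ combine into precisely the exponent $\frac{p(N+1)-N}{N+1}$, and then keep careful track of the powers of $M$ so that, after rescaling the Marcinkiewicz order, the final constant is \emph{linear} in $M$ as asserted. Everything else is routine bookkeeping: the applicability of Theorem \ref{teoGN} needs only $p<N$, and all the Marcinkiewicz orders appearing ($\frac{N+p}{N}$ and $\frac{N+2}{N+1}$) exceed $1$ because $p>1$, so the definition of the $M^\gamma$-norm and the conversions above are unproblematic.
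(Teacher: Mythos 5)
Your proposal is correct and takes essentially the same route as the paper: Gagliardo--Nirenberg (Theorem \ref{teoGN} with $h=1$, $\eta=p$) applied to $T_k(v)$ plus a Chebyshev argument for \eqref{loru1}, then a splitting over $\{|v|\le k\}$ versus $\{|v|>k\}$ combined with the truncation energy bound and optimization in $k$ for \eqref{lorDu1}, with the same conversion of level-set decay into the rescaled Marcinkiewicz norms and the same linear dependence on $M$. The only cosmetic difference is that the paper reaches the intermediate inequality
\[
\meas\{(t,x)\in Q_T:\ |\N v|^p>\lm\}\le \meas\{|v|>k\}+\frac{Mk}{\lm}
\]
by averaging the distribution function $\vp(k,\theta)$ in $\theta$, whereas you obtain the identical inequality directly by Chebyshev applied to $\N T_k(v)$.
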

\begin{proof}
Let us begin with the estimate concerning $v$. \\
Gagliardo-Nirenberg regularity results (see Theorem \ref{teoGN} when $w=y$) provide that $T_k(v)$ satisfies the following inequality:
\[
\int_0^T \|T_k(v(t))\|_{L^\eta(\Omega)}^\eta\,dt\le c\|T_k(v)\|_{L^\infty(0,T;L^1(\Omega))}^{\eta-p}\int_0^T\|\N T_k(v(t))\|_{L^p(\Omega)}^p\,dt
\] 
where $\eta=p\frac{N+1}{N}$. Thus, the bound above gives us the following estimate:
\[
\int_0^T \|T_k(v(t))\|_{L^\eta(\Omega)}^\eta\,dt\le cM^{\eta-p+1}k.
\]
This means that
\[
\begin{array}{c}
\ds
\meas \{(t,x)\in Q_T:\,\, |v|>k \}\le \frac{\int_0^T \|T_k(v(t))\|_{L^\eta(\Omega)}^\eta\,dt}{k^\eta}\le cM^{\eta-p+1}k^{-(\eta-1)}\\
[3mm]
=cM^{\frac{p+N}{N}} k^{-\frac{p(N+1)-N}{N}}.
\end{array}
\]
Taking $k=h^{\frac{N+p}{p(N+1)-N}}$, we obtain
\[
h\,\meas \{(t,x)\in Q_T:\,\, |v|^{\frac{p(N+1)-N}{N+p}}>h \}^{\frac{N}{N+p}}\le cM
\]
and so the first part is concluded.

We go further defining the function
\[
\vp(k,\lm)=\meas\{(t,x)\in Q_T:\,\, |\N v|^p>\lm\,\text{ and }\, |v|>k \}
\]
and observing that, being $\vp(k,\cdot)$ non increasing in the $\lm$ variable, the following inequalities hold:
\begin{align*}
\vp(0,\lm)\le \frac{1}{\lm}\int_0^\lm \vp(0,\theta)\,d\theta=\frac{1}{\lm}\int_0^\lm 
\vp(k,\theta)+[\vp(0,\theta)-\vp(k,\theta)]
\,d\theta\le \vp(k,0)+\frac{1}{\lm}\int_0^\lm 
\vp(0,\theta)-\vp(k,\theta)
\,d\theta.
\end{align*}
Moreover, since  
\[
\vp(0,\theta)-\vp(k,\theta)=\meas\{ (t,x)\in Q_T:\,\, |\N v|^{p}>\theta\,\text{ and }\, |v|\le k  \}
\]
and thus 
\[
\int_0^\infty 
[\vp(0,\theta)-\vp(k,\theta)]
\,d\theta=\int_0^T\|\N T_k(v(t))\|_{L^p(\Omega)}^p\,dt\le k M
\]
we finally get
\begin{align*}
\vp(0,\lm)\le \vp(k,0)+\frac{Mk}{\lm}.
\end{align*}
The definition of $\vp(k,\lm)$ allows us to say that
\[
\meas \{(t,x)\in Q_T:\,\, |\N v|^{p}>\lm \}\le cM^{\frac{p+N}{N}} k^{-\frac{p(N+1)-N}{N}}+\frac{Mk}{\lm}
\]
which, minimizing the r.h.s. in $k$, becomes
\[
\meas \{(t,x)\in Q_T:\,\, |\N v|^{p}>\lm \}\le cM^{\frac{N+2}{N+1}}\lm^{-\frac{p(N+1)-N}{p(N+1)}}.
\]
The estimate \eqref{lorDu1} follows taking $\lm=h^{p\frac{N+2}{p(N+1)-N}}$.\\
\end{proof}

\begin{rmkappb}
Note that we have just proved that the assumptions given in the previous statement ensure $v\in M^{\frac{p(N+1)-N}{N}}(Q_T)$ and $|\N v|\in M^{\frac{p(N+1)-N}{N+1}}(Q_T)$ which are the regularities satisfied by the heat problem (we refer to \cite{AMST,ST}).
\end{rmkappb}

\begin{lemb}\label{MC}
Let $1<p<N$, $v\in \mathcal{T}^{1,p}_0(Q_T)$ be such that
\begin{equation*}
\|v\|_{L^\infty(0,T;L^1(\Omega))}\le M
\end{equation*}
and
\begin{equation*}
\int_0^T\|\N T_k(v(t))\|_{L^p(\Omega)}^p\,dt\le  Mk
\end{equation*}
for every $k>0$. Then, we have that
$|\N v|^{\frac{p}{2}}\in M^1(Q_T)$. Moreover, the following estimate holds:
\begin{equation}\label{lorDu2}
\||\N v|^{\frac{p}{2}}\|_{M^{1}(Q_T)}\le c  M.
\end{equation}
where the constant $c$ depends on $N$, $p$ and $q$.
\end{lemb}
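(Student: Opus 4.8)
The plan is to mirror the gradient part of the proof of Lemma \ref{MGN}, retaining the same double-distribution-function machinery, but replacing the sharp Gagliardo--Nirenberg level-set bound on $v$ by a crude Chebyshev bound coming only from the $L^\infty(0,T;L^1(\Omega))$ control. This single substitution is exactly what changes the scaling and yields the (weaker) $M^1$ estimate instead of the $M^{\frac{N+2}{N+1}}$ one. First I would introduce, as in Lemma \ref{MGN},
\[
\vp(k,\lm)=\meas\{(t,x)\in Q_T:\,\,|\N v|^p>\lm\ \text{and}\ |v|>k\},
\]
and record that $\vp(k,\cdot)$ is non-increasing in $\lm$.

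Arguing verbatim as in Lemma \ref{MGN}, the monotonicity together with the splitting $\vp(0,\theta)=\vp(k,\theta)+[\vp(0,\theta)-\vp(k,\theta)]$ and the identity
\[
\int_0^\infty[\vp(0,\theta)-\vp(k,\theta)]\,d\theta=\int_0^T\|\N T_k(v(t))\|_{L^p(\Omega)}^p\,dt\le Mk
\]
yield the fundamental inequality $\vp(0,\lm)\le \vp(k,0)+\frac{Mk}{\lm}$. Here is the one essential departure: instead of estimating $\vp(k,0)\le\meas\{|v|>k\}$ through Gagliardo--Nirenberg, I would use only the hypothesis $\|v\|_{L^\infty(0,T;L^1(\Omega))}\le M$ and Chebyshev's inequality,
\[
\vp(k,0)\le\meas\{|v|>k\}=\int_0^T\meas\{x\in\Omega:\,|v(t,x)|>k\}\,dt\le \int_0^T\frac{\|v(t)\|_{L^1(\Omega)}}{k}\,dt\le \frac{TM}{k}.
\]

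Inserting this bound gives $\meas\{|\N v|^p>\lm\}\le \frac{cM}{k}+\frac{Mk}{\lm}$ for every $k>0$, and minimizing the right-hand side in $k$ (the optimal choice is $k\sim\lm^{1/2}$) produces
\[
\meas\{(t,x)\in Q_T:\,|\N v|^p>\lm\}\le cM\,\lm^{-1/2}.
\]
Finally, setting $\lm=h^2$ turns this into $h\,\meas\{|\N v|^{p/2}>h\}\le cM$ for all $h>0$, which is precisely the estimate \eqref{lorDu2}. I do not expect a genuine obstacle here: the argument is a direct adaptation of Lemma \ref{MGN}, and the only point requiring care is recognizing that the crude Chebyshev estimate (rather than the refined embedding) is both sufficient and necessary to reach the exponent $1/2$ matching the $M^1$ norm, and that the $L^\infty(0,T;L^1(\Omega))$ hypothesis alone supplies it.
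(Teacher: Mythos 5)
Your proposal is correct and coincides with the paper's own argument: the paper's proof of Lemma \ref{MC} consists precisely of the instruction to repeat the proof of Lemma \ref{MGN} with the Gagliardo--Nirenberg level-set bound on $v$ replaced by Chebyshev's inequality $\meas\{|v|>k\}\le c\frac{M}{k}$, after which the same $\vp(k,\lm)$ decomposition, the minimization in $k$ (at $k\sim\lm^{1/2}$), and the substitution $\lm=h^2$ give \eqref{lorDu2} exactly as you wrote. The only cosmetic point is that your Chebyshev step produces a factor $T$ in the constant, which the paper silently absorbs into $c$.
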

\begin{proof}
We just proceed as before changing the Gagliardo-Nirenberg inequality into \v{C}eby\v{s}\"ev's one, so our starting step reads
\[
\meas \{(t,x)\in Q_T:\,\, |v|>k \}\le c\frac{M}{k}.
\]
\end{proof}
\begin{rmkappb}[Comparison between estimates \eqref{lorDu1} and \eqref{lorDu2}]\label{rmkhom}
We point out that \eqref{lorDu1} and \eqref{lorDu2} respectively imply
\begin{equation*}
\begin{array}{c}
\ds
\|\N v\|_{M^{\frac{p(N+1)-N}{N+1}}(Q_T)}\le cM^{\frac{N+2}{p(N+1)-N}}\\
[3mm]
\ds
\|\N v\|_{M^\frac{p}{2}(Q_T)}\le cM^{\frac{2}{p}}
\end{array}
\end{equation*}
which hold \emph{for every $1<p<N$}. However, if we focus on the \emph{regularity} we have that \eqref{lorDu1} is better than \eqref{lorDu2} when $p>\frac{2N}{N+1}$. If instead we take into account the \emph{homogeneity exponent}, we have that \eqref{lorDu1} exhibits a preferable bound than \eqref{lorDu2} only when $p\ge 2$.\\
\end{rmkappb}

\subsection{On the sharpness of the assumptions}\label{sharp}
\setcounter{equation}{0}
\renewcommand{\theequation}{c.\arabic{equation}}

We go on with our analysis observing that the assumption \eqref{ID1} is the weakest one, within the class of Lebesgue spaces, which allows us to have an existence result. Roughly speaking, assuming \eqref{A1}, \eqref{A2}, \eqref{A3}, \eqref{H} and $u_0$ in a Lebesgue space $L^{\eta}(\Omega)$ with $1\le\eta<\si$ \emph{does not allow} \eqref{P} \emph{to necessarily admit a solution} (in some sense).\\

For the sake of simplicity, let us consider the problem \eqref{P} with the $p$-Laplace operator, the $q$ power of the gradient in the r.h.s. and zero forcing term $f$:
\begin{equation}\label{D}
\begin{cases}
\begin{array}{ll}
u_t-\D_p u=\gamma |\N u|^q &\text{in}\,\,Q_T,\\
u=0 &\text{on}\,\,(0,T)\times\partial\Omega,\\
u(0,x)=u_0(x)  & \text{in}\,\, \Omega
\end{array}
\end{cases}
\end{equation}
for some positive $\ga$.\\
Our goal is showing that there exists some initial datum
\begin{equation}\label{IDNE} 
u_0\in L^{\eta}(\Omega),\quad 1\le\eta<\si,
\end{equation}
such that the problem \eqref{D} does not admit any solution $u$ such that 
\begin{equation}\label{solne1}
\begin{array}{c}
\ds
u\in L_{loc}^p(0,T;W_0^{1,p}(\Omega))\cap C([0,T];L^{\eta}(\Omega)),\\
[4mm]\ds
|u|^\frac{\eta+p-2}{p}\in  L^p(0,T;W_0^{1,p}(\Omega)).
\end{array}
\end{equation}
Note that we are no longer dealing with functions belonging to the set
\begin{equation*}
\left\{u\,\,\text{solving \eqref{D}}: \quad |u|^{\frac{\si+p-2}{p}}\in L^p(0,T;W_0^{1,p})
\right\}.
\end{equation*}
since $\frac{\eta+p-2}{p}<\frac{\si+p-2}{p}$. \\
 
With this purpose, we follow the lines of \cite[Subsection $3.2$]{BASW}, that is, we prove suitable integral inequalities for $u$ (from above) and for  $U$ (from below), where $U=U(t,x)$ is the solution of the $p$-Laplace Cauchy-Dirichlet problem
\begin{equation}\label{UU}
\begin{cases}
\begin{array}{ll}
U_t-\D_p U=0 &\text{in}\,\,Q_T,\\
U=0 &\text{on}\,\,(0,T)\times\partial\Omega,\\
U(0,x)=u_0(x)  & \text{in}\,\, \Omega.
\end{array}
\end{cases}
\end{equation}
Having $u=U=0$ on $(0,T)\times\partial\Omega$ and also $u_0(x)=U(0,x)$ in $\Omega$ allows us to apply standard comparison results for $u_t-\D_p u$ (we quote, for instance, \cite{Di}) and deduce that $U\le u$.

\begin{theoc}
Let us consider \eqref{D} with $p\ge 2$ and $u_0(x)=|x|^{-\frac{N}{\eta}+\om}\chi_{\{|x|<1\}}$ for $\om>0$ sufficiently small, so $u_0$ fulfils \eqref{IDNE}. Then, \eqref{D} does not admit any solution verifying \eqref{solne1}.
\end{theoc}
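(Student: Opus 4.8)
The plan is to argue by contradiction. Suppose \eqref{D} admits a solution $u$ enjoying the regularity \eqref{solne1}. Since the source $\ga|\N u|^q$ is nonnegative, $u$ is a supersolution of the purely diffusive problem \eqref{UU}; as both functions vanish on $(0,T)\times\partial\Omega$ and share the initial trace $u_0\in L^\eta(\Omega)$, the comparison principle recalled just before the statement gives $0\le U\le u$ on $Q_T$. In particular $\|u(t)\|_{L^\eta(\Omega)}\ge\|U(t)\|_{L^\eta(\Omega)}$ for every $t$, while \eqref{solne1} forces $t\mapsto\|u(t)\|_{L^\eta(\Omega)}$ to be continuous on $[0,T]$ and $\||u|^{\frac{\eta+p-2}{p}}\|_{L^p(0,T;W^{1,p}_0(\Omega))}$ to be finite. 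These are the two \emph{upper} constraints that the singular datum will eventually contradict.

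First I would pin down the behaviour of the lower barrier $U$. Away from the cut-off, $u_0$ is homogeneous of degree $-\al$ with $\al=N/\eta-\om$, so for small $t$ and small $|x|$ the solution of \eqref{UU} is self-similar, $U(t,x)\simeq t^{-\ga_0}\Phi(x\,t^{-\delta_0})$, where matching the equation and the datum forces $\delta_0=[\al(p-2)+p]^{-1}$, $\ga_0=\al\delta_0$, and a positive profile $\Phi$ with $\Phi(\xi)\sim|\xi|^{-\al}$ as $|\xi|\to\infty$; here $p\ge 2$ is what guarantees that this profile exists, is smooth, and that the comparison is well posed. From this representation I would extract, in a shrinking parabolic neighbourhood of the origin, sharp lower bounds for $U$ and for $|\N U|$ together with their exact scaling in $t$. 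The decisive point is that $\eta<\si$ means $\al=N/\eta-\om>N/\si$ for $\om$ small, i.e. the datum is \emph{more singular than the scale-invariant exponent} $N/\si$ of \eqref{D}, so the self-similar rates $\ga_0,\delta_0$ lie on the ``supercritical'' side of the thresholds computed in \autoref{subsecsharp}.

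Next, following \cite[Subsection $3.2$]{BASW}, I would set up the \emph{lower} integral inequality. Testing \eqref{D} on $(0,t)\times\Omega$ against a fixed nonnegative bump $\psi$ supported near the origin and absorbing the diffusion flux gives
\[
\integrale u(t)\psi\,dx\ge\integrale u_0\psi\,dx+\ga\iint_{Q_t}|\N u|^q\psi\,dx\,ds-C\int_0^t\|\N u(s)\|_{L^p(\Omega)}^{p-1}\,ds .
\]
The aim is to show that, precisely when $\al>N/\si$, the gradient contribution on the right overtakes every other term and forces $\integrale u(t)\psi\,dx$ to diverge as $t\to 0^+$. The lower bound on $\ga\iint_{Q_t}|\N u|^q\psi$ would be produced from the self-similar control of $|\N U|$ and the ordering $u\ge U$: either by reinserting the barrier into the (nonlinear) variation-of-constants comparison and bootstrapping, or by a Poincar\'e-type inequality turning the large local mass of $U$ into a large gradient integral. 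In both cases the supercritical scaling makes the relevant time integral non-integrable at $t=0$, so $\integrale u(t)\psi\,dx\to\infty$, contradicting $\integrale u(t)\psi\,dx\le\|u(t)\|_{L^\eta(\Omega)}\|\psi\|_{L^{\eta'}(\Omega)}\le C$; this rules out any solution satisfying \eqref{solne1}.

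The main obstacle is exactly this third step. Unlike the case $p=2$ of \cite{BASW}, there is no linear heat kernel nor Duhamel formula for the $p$-Laplacian, so both the self-similar lower bound for $|\N U|$ and the divergence of $\iint|\N U|^q\psi$ must be obtained by scaling invariance combined with comparison against explicitly constructed sub/supersolutions, rather than by kernel estimates; moreover the ordering $u\ge U$ cannot be differentiated, so the gradient nonlinearity has to be bounded below through the self-similar profile or through Poincar\'e inequalities rather than by a naive pointwise gradient comparison. Finally, one must check that the divergence is triggered \emph{exactly} on the range $\eta<\si$ (equivalently $\al>N/\si$), reproducing the criticality threshold $\si=\frac{N(q-(p-1))}{p-q}$ at which the superlinear a priori estimate of Theorem \ref{sapp} ceases to close; tracking the self-similar exponents $\ga_0,\delta_0$ and letting $\om\to 0^+$ is the delicate quantitative part.
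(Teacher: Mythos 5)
Your first two moves (the comparison $0\le U\le u$ and a quantitative lower bound for the local mass of $U$ near the singularity) follow the same skeleton as the paper's proof, except that the paper obtains the lower bound directly from DiBenedetto's Harnack inequality for the $p$-Laplacian \cite{Di}, which spares you the nontrivial task of constructing a self-similar profile $\Phi$ and proving that $U$ is asymptotically self-similar for a datum that is only homogeneous near the origin. The genuine gap is in your third step, and it is fatal as stated. Your contradiction target is the bound $\integrale u(t)\psi\,dx\le\|u(t)\|_{L^\eta(\Omega)}\|\psi\|_{L^{\eta'}(\Omega)}\le C$, and to violate it you need a divergent lower bound on $\ga\iint_{Q_t}|\N u|^q\psi\,dx\,ds$. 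No such bound can be extracted from the information you have. First, as you yourself note, $u\ge U$ gives no pointwise control of $\N u$ from below, and the Duhamel/bootstrapping mechanism of \cite{BASW} has no analogue for the $p$-Laplacian (which is precisely why the paper does not attempt it). Second, the Poincar\'e route cannot work quantitatively: the Harnack (or self-similar) lower bound for $\int_{B_r}U(t)\,dx$ is always compatible with the cap $\int_{B_r}u(t)\,dx\le\|u(t)\|_{L^\eta(\Omega)}|B_r|^{1/\eta'}$ --- indeed $U$ itself belongs to $C([0,T];L^\eta(\Omega))$ and satisfies both bounds, so nothing the barrier alone provides can ever contradict mere $L^\eta$-boundedness of $u$; moreover, large mass in $B_r$ does not force a large gradient (the function could be near-constant on $B_{2r}$) unless one already knows it is small on an annulus, which the $L^\eta$ bound does not give at the relevant scale. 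A further technical defect: under \eqref{solne1} you only have $u\in L^p_{loc}(0,T;W^{1,p}_0(\Omega))$, locally in time, so the flux term $\int_0^t\|\N u(s)\|_{L^p(\Omega)}^{p-1}\,ds$ in your integral inequality is not even known to be finite near $s=0$.

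The missing idea --- and the way the paper closes the argument --- is to use the second half of \eqref{solne1} to produce a \emph{sharper upper bound} on the local mass of $u$, rather than any lower bound on the gradient source. From $|u|^{\frac{\eta+p-2}{p}}\in L^p(0,T;W^{1,p}_0(\Omega))$ one deduces, reasoning as in \eqref{3Hpp}, that $\iint_{Q_T}|\N u|^q|u|^{\eta-1}\,dx\,dt<\infty$, i.e. $|u|^{\frac{\eta+q-1}{q}}\in L^q(0,T;W^{1,q}_0(\Omega))$; hence along some sequence $t_j\to 0$ one has $\|\N (|u(t_j)|^{\frac{\eta+q-1}{q}})\|_{L^q(\Omega)}^q\le 1/t_j$, and H\"older plus Sobolev give $\int_{B_r}u(t_j)\,dx\le c\,t_j^{-\frac{1}{\eta+q-1}}r^{N-\frac{N-q}{\eta+q-1}}$, which at the parabolic scale $r\sim t_j^{\frac{\eta}{p\eta+N(p-2)-\om\eta(p-2)}}$ is strictly stronger than the plain H\"older-in-$L^\eta$ cap. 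Comparing this upper bound with the Harnack lower bound for $U\le u$ yields $t_j^{\vp}\le c$, and the requirement $\vp\ge 0$ reduces to $\si-\eta\le \frac{\om\eta}{p-q}(\eta+q-p+1)$, which is violated for $\om$ small since $\eta<\si$; this is the contradiction. Without this use of the weighted gradient regularity in \eqref{solne1}, your scheme cannot distinguish the supercritical datum from a critical one, because every estimate you invoke is also satisfied by the genuine solution $U$ of \eqref{UU}.
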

\begin{proof}
\textit{Lower bound for $U$.}\\
We start recalling the Harnack inequality (see \cite[Chapter $VI$ Paragraph $8$]{Di}) satisfied by $U(t,x)$:
\begin{equation*}
\fint_{B_r}u_0(x)\,dx\le c\left\{
\left(\frac{r^p}{t}\right)^\frac{1}{p-2}+
\left(\frac{t}{r^p}\right)^{\frac{N}{p}}\left[
\inf_{y\in B_r}U(t,y)\right]^{\frac{\lm}{p}}
\right\}
\end{equation*}
where $t>0$, $B_r=B_r(0)$, $c=c(N,p)$ and $\lm =p(N+1)-2N$.\\
The particular choice of $u_0$ implies that
\[
\fint_{B_r}u_0(x)\,dx\ge cr^{-\frac{N}{\eta}+\om}
\]
and so, taking $r$ such that 
\begin{equation}\label{start}
t\gg cr^{p+\frac{(N-\om \eta)(p-2)}{\eta}},
\end{equation}
we have $r^{-\frac{N}{\eta}+\om}\gg \left(\frac{r^p}{t}\right)^\frac{1}{p-2}$. Then, we are allowed to say that
\begin{equation*}
\inf_{y\in B_r}U(t,y)\ge c t^{-\frac{N}{\lm}}r^{\frac{pN}{\lm}\frac{\eta-1}{\eta} +p\frac{\om}{\lm}}
\end{equation*}
and also to deduce
\begin{equation}\label{infU}
\int_{B_r}U(t,y)\,dy\ge c t^{-\frac{N}{\lm}}r^{\frac{pN}{\lm}\frac{\eta-1}{\eta} +p\frac{\om}{\lm}+N}.
\end{equation}
\noindent
\textit{Upper bound for $u$.}\\
We now look for a bound from above for the integral in the space variable of the solution of \eqref{D}.
First, we observe that such a solution $u$ should fulfil:
\begin{equation}\label{solne2}
|u|^{\frac{\eta+q-1}{q}}\in L^q(0,T;W^{1,q}_0(\Omega))
\end{equation}
where this last regularity follows from the boundedness of
\[
\iint_{Q_T} |\N u|^q|u|^{\eta-1}\,dx\,dt<\infty.
\]
Note that the above boundedness holds thanks to \eqref{solne1} and follows reasoning as in \eqref{3Hpp}.
Then, there exists at least a sequence $\{t_j\}_j$ satisfying $t_j\to 0$ such that, applying also H\"older's inequality with indices $\left(q^*\frac{\eta+q-1}{q}, \left(q^*\frac{\eta+q-1}{q} \right)'\right)$, we have
\begin{equation*} 
\int_{\Omega}|\N u(t_j)|^q|u(t_j)|^{\eta-1}\,dx=\|\N (|u(t_j)|^{\frac{\eta+q-1}{q}})\|_{L^q(\Omega)}^q\le \frac{1}{t_j}. 
\end{equation*}
Then, we obtain
\begin{equation}\label{u}
\begin{split}
\int_{B_r}u(t_j,y)\,dy &\le \|u(t_j)\|_{L^{q^*\frac{\eta+q-1}{q}}(\Omega)}
r^{N-\frac{Nq}{q^*(\eta+q-1)}}\\
&\le c\|\N (|u(t_j)|^{\frac{\eta+q-1}{q}})\|_{L^q(\Omega)}^{\frac{q}{\eta+q-1}}\,
r^{N-\frac{Nq}{q^*(\eta+q-1)}}\\
& \le ct_j^{-\frac{1}{\eta+q-1}}r^{N-\frac{N-q}{\eta+q-1}}.
\end{split}
\end{equation}

\noindent
\textit{Conclusion.}\\
We already know that $U\le u$ where $u$ and $U$ are, respectively, solutions of \eqref{D} and \eqref{UU}. Then we take advantage of this information gathering \eqref{infU} and \eqref{u}, so we get
\begin{equation*}
t_j^{-\frac{N}{\lm}}r^{N+\frac{pN}{\lm}\frac{\eta-1}{\eta} +p\frac{\om}{\lm}}\le\int_{B_r}U(t_j,y)\,dy\le\int_{B_r}u(t_j,y)\,dy\le ct_j^{-\frac{1}{\eta+q-1}}r^{N-\frac{N-q}{\eta+q-1}}
\end{equation*}
from which
\begin{equation}\label{dis}
t_j^{\frac{1}{\eta+q-1}}r^{\frac{pN}{\lm}\frac{\eta-1}{\eta} +p\frac{\om}{\lm}+\frac{N-q}{\eta+q-1}-\frac{N}{\lm}}\le c.
\end{equation}
We recall \eqref{start} and set $r=\omega t_j^{\frac{\eta}{p\eta+N(p-2)-\om\eta(p-2)}}$, where $0<\omega\ll 1$, obtaining
\begin{equation}\label{dis222}
t_j^{\vp}\le c(\omega)
\end{equation}
for
$\vp=\vp(\eta)$ defined by
\begin{align*}
\vp=-\frac{N}{\lm}+\frac{1}{\eta+q-1}+
\frac{p}{\lm}\frac{\eta(N+\om)- N}{\eta(p-\om(p-2))+N(p-2)}+\frac{\eta(N-q)}{(\eta+q-1)(\eta(p-\om(p-2))+N(p-2))}.
\end{align*}
This means that, as $j\to\infty$, we need to have $\vp\ge 0$ in order to have \eqref{dis222} fulfilled. Algebraic computations lead us to the equivalent request
\[
\begin{array}{c}
\ds
(\eta+q-1)\left[
-N\eta(p-\om(p-2))-N^2(p-2)+p\eta(N+\om)-Np
\right]\\
[4mm]
\ds
+\lm[(p-\om(p-2))\eta+N(p-2)+\eta(N-q)]\\
[4mm]\ds
=(\eta+q-1)\left[
\eta\om(N(p-2)+p)-N(N(p-2)+p)
\right]\\
[4mm]
\ds
+\lm[\eta(N+p-q-\om(p-2))+N(p-2)]\\
[4mm]\ds
=-\lm(\eta+q-1)(N-\om\eta )
+\lm[\eta(N+p-q-\om(p-2))+N(p-2)]\ge 0
\end{array}
\]
by the definition of $\lm$. Then, looking for $\vp\ge 0$, we erase $\lm$ getting
\[
\begin{array}{c}
\ds
\eta(p-q)-N(q-(p-1))-\om\eta(\eta+q-p+1) \ge 0
\end{array}
\]
from which we deduce
\begin{equation}\label{sieta}
\si-\eta\le \frac{\om\eta}{p-q}(\eta+q-p+1)
\end{equation}
thanks to the definition of $\si$.\\
Since we can choose $\om$ sufficiently small such that \eqref{sieta} is violated, then we deduce the assertion by contradiction.
\end{proof}

\section*{Acknowledgements}
The author wishes to thank Andrea Dall'Aglio and Alessio Porretta for their advices, comments and support during the composition of this work.

\end{document}